\newtheorem{theorem}{\textbf{Theorem}}[section]
\newtheorem{lemma}{\textbf{Lemma}}[section]
\newtheorem{proposition}{\textbf{Proposition}}[section]
\newtheorem{corollary}{\textbf{Corollary}}[section]
\newtheorem{remark}{\textbf{Remark}}[section]
\newtheorem{definition}{\textbf{Definition}}[section]
\def\be{\begin{equation}}
	\def\ee{\end{equation}}
\def\bea{\begin{eqnarray}}
	\def\eea{\end{eqnarray}}
\def\bt{\begin{theorem}}
	\def\et{\end{theorem}}
\def\bl{\begin{lemma}}
	\def\el{\end{lemma}}
\def\br{\begin{remark}}
	\def\er{\end{remark}}
\def\bp{\begin{proposition}}
	\def\ep{\end{proposition}}
\def\bc{\begin{corollary}}
	\def\ec{\end{corollary}}
\def\bd{\begin{definition}}
	\def\ed{\end{definition}}
\begin{document}
	
	\title{Long-time behavior of a nonlocal Cahn--Hilliard equation with nonlocal dynamic boundary condition and singular potentials}
	\author{
		Maoyin Lv \thanks{%
			School of Mathematical Sciences, Fudan University, Shanghai
			200433, P. R. China. Email: \texttt{mylv22@m.fudan.edu.cn} }, \ \ \
		Hao Wu \thanks{%
			Corresponding author. School of Mathematical Sciences, Fudan University, Shanghai 200433, P. R. China. Email: \texttt{haowufd@fudan.edu.cn} } }
	\date{\today }
	\maketitle


	\begin{abstract}
		\noindent
We investigate the long-time behavior of a nonlocal Cahn--Hilliard equation in a bounded domain $\Omega\subset\mathbb{R}^d$ $(d\in\{2,3\})$,
subject to a kinetic rate-dependent nonlocal dynamic boundary condition.
The kinetic rate $1/L$, with $L\in[0,+\infty)$, distinguishes different types of bulk-surface interactions.
For general singular potentials, including the physically relevant logarithmic potential,
we establish the existence of a global attractor $\mathcal{A}_m^L$ in a suitable complete metric space for any $L\in[0,+\infty)$.
Moreover, we verify that the global attractor $\mathcal{A}_m^0$ is stable with respect to perturbations $\mathcal{A}_m^L$ for small $L>0$.
When $L\in(0,+\infty)$, based on the strict separation property of global weak solutions, we further prove the existence of exponential attractors via a short-trajectory type technique, which also implies that the global attractor has finite fractal dimension.
Finally, for this case, we show that every global weak solution converges to a single equilibrium in $\mathcal{L}^\infty$ as time goes to infinity, using a generalized {\L}ojasiewicz--Simon inequality and an Alikakos--Moser type iteration.
		
		\medskip \noindent \textit{Keywords}: Nonlocal Cahn--Hilliard equation, dynamic boundary condition, singular potential, global attractor, exponential attractor, convergence to equilibrium.
		\medskip
		
		\noindent \textit{MSC 2020}: 35B40, 35B41, 35K61, 35Q92.
	\end{abstract}
	
	
	\section{Introduction}
	In this paper, we investigate the following nonlocal Cahn--Hilliard equation
	\begin{align}
		\begin{cases}
		\partial_{t}\varphi=\Delta\mu,&\text{in }\Omega\times(0,+\infty),\\
		\mu=a_{\Omega}\varphi-J\ast\varphi+F'(\varphi),&\text{in }\Omega\times(0,+\infty),
		\end{cases}\label{nonlocal-CH}
	\end{align}
	subject to the nonlocal dynamic boundary condition
	\begin{align}
		\begin{cases}
		\partial_{t}\psi=\Delta_{\Gamma}\theta-\partial_{\mathbf{n}}\mu,&\text{on }\Gamma\times(0,+\infty),\\
		\theta=a_{\Gamma}\psi-K\circledast\psi+G'(\psi),&\text{on }\Gamma\times(0,+\infty),\\
			L\partial_{\mathbf{n}}\mu=\theta-\mu,&\text{on }\Gamma\times(0,+\infty),
		\end{cases}\label{nonlocal-BC}
	\end{align}
	and initial conditions
	\begin{align}
		\varphi|_{t=0}=\varphi_{0}\ \ \text{in }\Omega\quad\text{ and }\quad
		\psi|_{t=0}=\psi_{0}\ \ \text{on }\Gamma.\label{initial}
	\end{align}
The Cahn--Hilliard equation serves as an efficient diffuse interface model for the study of phase segregation phenomena in binary mixtures.
It has broad applications in modeling related phenomena across diverse fields, including materials science, image inpainting, biology, and fluid mechanics. Its nonlocal variant, as in \eqref{nonlocal-CH}, was introduced to describe possible long-range interactions between particles in the interacting materials; see, e.g., \cite{BH,BH05,GL96,GL97}.
Moreover, non-trivial boundary effects have attracted considerable attention, and various forms of dynamic boundary conditions have been explored in the existing literature;
see, e.g., \cite{Gal06,GMS,KLLM,LW} and the recent review paper \cite{W}.

Recently, an extended model consisting of the nonlocal Cahn--Hilliard equation \eqref{nonlocal-CH}
and the nonlocal dynamic boundary condition \eqref{nonlocal-BC} was proposed to describe phase separation processes
with long-range interactions both within the bulk material and on its boundary (cf. \cite{KS}).
The well-posedness of the initial boundary value problem \eqref{nonlocal-CH}--\eqref{initial}
has been established in \cite{KS} for regular potentials and later in \cite{LvWu-4} for singular potentials.
In \eqref{nonlocal-BC}, the parameter $L\in[0,+\infty]$ distinguishes different types of bulk-surface interactions,
and the coefficient $1/L$ can be interpreted as the reaction rate (cf. \cite{KLLM}).
Since $L=+\infty$ implies that the bulk and surface subsystems are completely decoupled,
this situation is less interesting and will not be considered here.
In this study, we shall focus on the case $L\in[0,+\infty)$
and investigate the long-time behavior of problem \eqref{nonlocal-CH}--\eqref{initial} with singular potentials.

	In \eqref{nonlocal-CH}, $\Omega\subset\mathbb{R}^{d}$ $(d\in\{2,3\})$ is a smooth bounded domain with boundary $\Gamma:=\partial\Omega$,
    and the symbol $\Delta$ denotes the Laplace operator in $\Omega$.
    The functions $\varphi:\Omega\times(0,+\infty)\rightarrow[-1,1]$ and $\mu:\Omega\times(0,+\infty)\rightarrow\mathbb{R}$
    denote the bulk phase-field variable and the bulk chemical potential, respectively.
    The symbol $\Delta_{\Gamma}$ stands for the Laplace--Beltrami operator on $\Gamma$,
    the bold symbol $\mathbf{n}$ denotes the outward unit normal vector on the boundary,
    and $\partial_{\mathbf{n}}$ represents the outward normal derivative on $\Gamma$.
    The functions $\psi:\Gamma\times(0,+\infty)\rightarrow[-1,1]$ and $\theta:\Gamma\times(0,+\infty)\rightarrow\mathbb{R}$
    represent the surface phase-field variable and the surface chemical potential, respectively.
    The total free energy functional associated with the system \eqref{nonlocal-CH}--\eqref{nonlocal-BC} is defined as
	\begin{align}
		E(\boldsymbol{\varphi}):= E_{\mathrm{bulk}}(\varphi)+E_{\mathrm{surf}}(\psi),\quad \boldsymbol{\varphi}:=(\varphi, \psi),
		\label{totalenergy}
	\end{align}
	where the bulk free energy $E_{\text{bulk}}$ and the surface free energy $E_{\text{surf}}$ are given by
	\begin{align}
		&E_{\mathrm{bulk}}(\varphi) :=\frac{1}{4}\int_{\Omega}\int_{\Omega}J(x-y)|\varphi(x)-\varphi(y)|^{2} \,\mathrm{d}y\,\mathrm{d}x+\int_{\Omega}F(\varphi(x))\,\mathrm{d}x,\notag\\
		&E_{\mathrm{surf}}(\psi) :=\frac{1}{4}\int_{\Gamma}\int_{\Gamma}K(x-y)|\psi(x)-\psi(y)|^{2} \,\mathrm{d}S_{y}\,\mathrm{d}S_{x}+\int_{\Gamma}G(\psi(x))\,\mathrm{d}S_{x}.\notag
	\end{align}
	Then the chemical potentials $\mu$ and $\theta$ can be expressed as Fr\'echet derivatives of the bulk and surface free energies, respectively.
	The mutual interactions between particles, both short- and long-range, are modeled via convolution integrals.
    These integrals are weighted by suitable interaction kernels  $J,K:\mathbb{R}^{d}\rightarrow\mathbb{R}$,
    which are assumed to be even, i.e., $J(x)=J(-x)$ and $K(x)=K(-x)$ for all $x\in\mathbb{R}^{d}$.
    The symbols ``$\ast$'' in \eqref{nonlocal-CH}$_2$ and ``$\circledast$'' in \eqref{nonlocal-BC}$_2$
    denote the convolutions in the bulk and on the boundary, respectively, that is,
	\begin{align*}
		&(J\ast\varphi)(x,t) :=\int_{\Omega}J(x-y)\varphi(y,t)\,\mathrm{d}y,\quad\forall\,(x,t)\in \Omega\times(0,+\infty),\\
		&(K\circledast\psi)(x,t) :=\int_{\Gamma}K(x-y)\psi(y,t)\,\mathrm{d}S_{y},\quad\forall\,(x,t)\in\Gamma\times(0,+\infty).
	\end{align*}
	Moreover, the functions $a_{\Omega}$ and $a_{\Gamma}$ are defined as
	\begin{align*}
		a_{\Omega}(x):=(J\ast1)(x),\qquad a_{\Gamma}(y):=(K\circledast1)(y),
	\end{align*}
	for all $x\in\Omega$ and $y\in\Gamma$.

	The nonlinear potential functions $F$ and $G$ represent the free energy densities in the bulk and on the boundary, respectively.
    To describe the phase separation phenomena, $F$ and $G$ usually have a double-well structure.
    A well-known physically relevant example is the logarithmic potential \cite{CH}:
	\begin{align}
		&\mathcal{W}_{\mathrm{log}}(s) :=\frac{\Theta}{2}[(1+s)\mathrm{ln}(1+s)+(1-s)\mathrm{ln}(1-s)]-\frac{\Theta_{0}}{2}s^{2},\quad s\in(-1,1),\label{log}
	\end{align}
where $\Theta$ is the absolute temperature of the system and $\Theta_0$ represents the critical temperature of phase separation.
Phase separation processes occur when $\Theta_{0}>\Theta>0$. Under this condition, we find that $\mathcal{W}_{\mathrm{log}}$ is non-convex with two minima $\pm s_\ast\in (-1,1)$,
    where $s_\ast$ is the positive root of the equation $\mathcal{W}_{\mathrm{log}}'(s)=0$.
Since $\mathcal{W}_{\text{log}}'(s)\to\pm\infty$ as $s\to\pm1$, the function $\mathcal{W}_{\text{log}}$ is usually referred to as a singular potential in the literature.
	The nonlinearities $F'$ in \eqref{nonlocal-CH}$_2$ and $G'$ in \eqref{nonlocal-BC}$_2$
    denote the derivatives of the potentials $F$ and $G$, respectively.
    Moreover, when non-smooth potentials are taken into account,
    $F'$ and $G'$ correspond to the subdifferential of the convex part (which may be multi-valued graphs) plus the derivative of the smooth concave perturbations.

In \eqref{nonlocal-CH}--\eqref{initial}, the bulk and boundary chemical potentials $\mu$, $\theta$
are coupled through the boundary condition \eqref{nonlocal-BC}$_3$.
This condition models possible adsorption or desorption processes between the materials in the bulk and on the boundary (see \cite{KLLM}).
	Sufficiently regular solutions to problem \eqref{nonlocal-CH}--\eqref{initial} satisfy the properties of mass conservation and energy dissipation, that is,
	\begin{align}
		\int_{\Omega}\varphi(t)\,\mathrm{d}x +\int_{\Gamma}\psi(t)\,\mathrm{d}S =\int_{\Omega}\varphi_{0}\,\mathrm{d}x+\int_{\Gamma}\psi_{0}\,\mathrm{d}S, \quad\forall\,t\in[0,+\infty), \notag
	\end{align}
	and
	\begin{align}
		& \frac{\mathrm{d}}{\mathrm{d}t}E(\boldsymbol{\varphi}(t)) +\int_{\Omega}|\nabla\mu(t)|^{2} \,\mathrm{d}x +\int_{\Gamma}|\nabla_{\Gamma}\theta(t)|^{2}\,\mathrm{d}S
        \notag\\
        &\qquad +\chi(L)\int_{\Gamma}|\theta(t)-\mu(t)|^{2}\,\mathrm{d}S=0, \quad\forall\,t\in(0,+\infty), \notag
	\end{align}
	with
	\begin{align*}
		\chi(L)=
		\begin{cases}
			1/L,&\text{if }L\in(0,+\infty),\\[1mm]
			0,&\text{if }L=0.
		\end{cases}
	\end{align*}
	Here, the symbol $\nabla$ denotes the usual gradient operator in the bulk, and $\nabla_{\Gamma}$ denotes the tangential (surface) gradient operator on the boundary.

	The nonlocal Cahn--Hilliard equation \eqref{nonlocal-CH}
    was rigorously derived through a stochastic argument in the seminal work \cite{GL96,GL97}.
    It incorporates both long-range repulsive interactions between different species and short-range hard collisions between all particles.
   This equation serves as a macroscopic limit of microscopic phase segregation models with particle-conserving dynamics.
	When studying the evolution within a bounded domain, suitable boundary and initial conditions must be considered. A typical choice is the homogeneous Neumann boundary condition for the bulk chemical potential, that is,
	\begin{align}
	     \partial_\mathbf{n}\mu=0,\quad\text{on }\Gamma\times(0,+\infty).\label{Neumann}
	\end{align}
    The nonlocal Cahn--Hilliard equation \eqref{nonlocal-CH} subject to \eqref{Neumann} and suitable initial condition has been extensively studied from various aspects.
    We refer to \cite{ABG,BH,BH05,DRST,GZ,GGG,PS} for results concerning well-posedness and regularity properties of solutions.
    Studies on its coupling with fluid equations can be found in \cite{CFG22,CFG,FGGS,FG,GGGP}.
    For the property of strict separation from the pure states $\pm 1$, one can see \cite{GGG,GGG23,Gior,Po,GP}.
    Convergence results of the nonlocal Cahn--Hilliard equation to its local counterpart are presented in \cite{AH,DRST,DST,DST20,HKP}.
    Regarding the long-time behavior, we refer to \cite{ABG,FG12,GGG,GG}.
    In particular, the authors in \cite{GG} proved the existence of exponential attractors for regular potentials,
    and established a similar result for the viscous nonlocal Cahn--Hilliard equation with a singular potential.
    They also demonstrated the convergence of a global solution to a single steady state as $t\to +\infty$.
    Subsequently, the authors in \cite{GGG} proved the strict separation property in two dimensions using an Alikakos--Moser iteration argument.
    This property enabled them to extend the results in \cite{GG} to the nonlocal Cahn--Hilliard equation with a singular potential.

	The system \eqref{nonlocal-CH}--\eqref{initial} under investigation was rigorously derived in \cite{KS}
    as the gradient flow of the nonlocal free energy \eqref{totalenergy},
    with respect to a suitable inner product of order $H^{-1}$ containing both bulk and surface contributions.	
    In \cite{KS}, the authors studied the problem \eqref{nonlocal-CH}--\eqref{initial}
    with a boundary penalty term and regular potentials satisfying suitable growth conditions.
    They first established the weak well-posedness for the case $L\in(0,+\infty)$ using a gradient flow approach
    and then investigated the asymptotic limits as the relaxation parameter $L$ tends to $0$ and $+\infty$.
    Under certain additional assumptions, they further obtained higher-order regularity for the solution and established the strong well-posedness
    for the problem \eqref{nonlocal-CH}--\eqref{initial} with a boundary penalty term.
    In our recent work \cite{LvWu-4}, we analyzed the problem \eqref{nonlocal-CH}--\eqref{initial} with singular potentials,
    including the physically relevant logarithmic potential \eqref{log}.
    We first established the existence of global weak solutions when $L\in(0,+\infty)$
    using the Yosida approximation for singular potentials and a suitable Faedo--Galerkin scheme.
    Then we verified the asymptotic limits as $L\to0$ and $L\to+\infty$,
    which also imply the existence of global weak solutions for the limit cases $L=0$ and $L=+\infty$.
    Under additional assumptions on the interaction kernels,
    we also established the convergence rates of the Yosida approximation as the approximating parameter $\varepsilon\to0$,
    as well as the asymptotic limits as $L\to0$ and $L\to+\infty$.
    Furthermore, we demonstrated the regularity propagation and established the strict separation property
    for the case $L\in(0,+\infty)$ by means of a suitable De Giorgi's iteration scheme.
    Finally, we mention \cite{G}, in which the author considered a fractional Cahn--Hilliard equation subject to a fractional dynamic boundary condition.

	In this study, we aim to investigate the long-time behavior of problem \eqref{nonlocal-CH}--\eqref{initial},
    including the existence of global and exponential attractors, and the convergence to a single equilibrium as $t\to +\infty$.
	\begin{itemize}
	\item [(1)] \textbf{Global attractor.} The global attractor is the smallest compact and invariant set in the phase space
    that attracts all bounded sets of initial data as time goes to infinity. For the case $L\in[0,+\infty)$, we prove the existence of a global attractor
    for the dynamical system $(\mathfrak{X}_m,\mathcal{S}^L(t))$ associated with problem \eqref{nonlocal-CH}--\eqref{initial} (see Theorem \ref{global}).
    The proof relies on a general result concerning the existence of global
attractors for semigroups $\mathcal{S}(t)$ acting on a complete metric space $\mathcal{X}$ (see \cite[Corollary 6]{PZ}).
This general result relaxes the usual requirement of strong continuity $\mathcal{S}(t)\in C(\mathcal{X},\mathcal{X})$
to the weaker condition that $\mathcal{S}(t)$ be a closed map.     Consequently, we only need to verify the following three conditions:
    \begin{itemize}
        \item The semigroup $\mathcal{S}^L(t):\mathfrak{X}_m\to\mathfrak{X}_m$ is a closed map;
        \item The semigroup $\mathcal{S}^L(t)$ possesses a connected compact attracting set $\mathcal{K}$;
        \item $\mathcal{S}^L(t)\mathcal{K}\subset\mathcal{K}$ holds for sufficiently large $t$.
    \end{itemize}
    After establishing the existence of a global attractor $\mathcal{A}_m^L$ for $L\in[0,+\infty)$,
    we investigate the stability of the global attractor $\mathcal{A}_m^0$ with respect to perturbations $\mathcal{A}_{m}^L$ for small $L>0$.
    Specifically, we analyze the asymptotic limit of the family $\{\mathcal{A}_m^L\}_{L>0}$ as $L\to0$
    and establish the upper semicontinuity at $L=0$ (see Proposition \ref{stability}).
    We mention that some techniques employed to prove the stability of global attractors at $L=0$ are similar to those used in \cite{GKY},
    where the authors studied the local Cahn--Hilliard equation with dynamic boundary conditions.
	
	\item [(2)] \textbf{Exponential attractor.} An exponential attractor is a semi-invariant and compact set that attracts all bounded sets of the phase space exponentially fast.
    To prove the existence of exponential attractors, the following strict separation property
    $$\|\boldsymbol{\varphi}(t)\|_{\mathcal{L}^\infty}\leq 1-\delta(\tau),\quad \forall\, t\geq\tau,
    $$
    for any $\tau>0$, plays a crucial role, as it enables us to overcome those difficulties caused by the singular potentials.
    Here, the function space $\mathcal{L}^\infty$ is defined as  $\mathcal{L}^\infty:= L^\infty(\Omega)\times L^\infty(\Gamma)$.
    Inspired by \cite{GG},  we establish the existence of exponential attractors (see Theorem \ref{exponential}), using a short-trajectory technique devised in \cite{EZ04}.
    The finite dimensionality of an exponential attractor immediately implies that the global attractor has finite fractal dimension (see Corollary \ref{finite-global}).
    The proof proceeds in two main steps. First, we derive some continuous dependence estimates (see Lemmas \ref{Holder-Lip}--\ref{compactness})
    and apply Lemma \ref{abstract} to conclude the existence of a (discrete) exponential attractor $\mathcal{E}_d$ for the discrete semigroup $\{\mathbb{S}^n:=\mathcal{S}^L(nT)\}_{n\in\mathbb{N}}$.
    Then, following arguments similar to those in \cite[Proof of Theorem 2.8]{GG} (or \cite[Proof of Theorem 4.2]{EZ04}),
    we conclude that
    \[\mathcal{E}=\bigcup_{t\in[0,T]}\mathcal{S}^L(t)\mathcal{E}_d\]
    is the desired exponential attractor for the continuous-time case.
	
	\item [(3)] \textbf{Convergence to equilibrium.} Under the additional assumption that the singular potentials $F$, $G$ are real analytic on $(-1,1)$,
    we prove that every global weak solution converges to a single equilibrium as $t\to+\infty$ (see Theorem \ref{equilibrium}).
    The proof is based on the \L ojasiewicz--Simon approach. First, we apply an abstract result \cite[Theorem 6]{GG03} to derive a generalized \L ojasiewicz--Simon inequality (see Lemma \ref{LS}).
    This inequality enables us to prove the existence of a steady state $\boldsymbol{\varphi}_\infty\in\mathcal{H}^1$ such that
    \[\|\boldsymbol{\varphi}(t)-\boldsymbol{\varphi}_\infty\|_{\mathcal{L}^2}\to0\quad\text{as }t\to+\infty.\]
    Here, the function spaces $\mathcal{H}^1$ and $\mathcal{L}^2$ are given by $\mathcal{H}^1:=H^1(\Omega)\times H^1(\Gamma)$,  $\mathcal{L}^2:=L^2(\Omega)\times L^2(\Gamma)$, respectively.
    Combining the convergence in $\mathcal{L}^2$ with an $\mathcal{L}^2$--$\mathcal{L}^\infty$ smoothing property (see Lemma \ref{L2-L infty}), we further obtain
    \[\|\boldsymbol{\varphi}(t)-\boldsymbol{\varphi}_\infty\|_{\mathcal{L}^\infty}\to 0\quad\text{as }t\to+\infty.\]
The proof of Lemma \ref{L2-L infty} relies on an Alikakos--Moser type argument (cf. \cite{Gal12})
and the regularity of the stationary solution $\boldsymbol{\varphi}_\infty$.
	\end{itemize}

\medskip
\noindent\textbf{Outline of the paper.}
In Section 2, we first introduce notation and function spaces used in the subsequent analysis, and then present the main results of this paper.
In Section 3, we prove the existence of a global attractor for $L\in[0,+\infty)$,
and study the stability of the family of global attractors at $L=0$.
Sections 4 and 5 address the existence of exponential attractors and the convergence to a single equilibrium, respectively, for the case $L\in(0,+\infty)$.
Some useful analytical tools are collected in the Appendix.

	\section{Main Results}
	\setcounter{equation}{0}
	\subsection{Notation and preliminaries}
	For any real Banach space $X$, we denote its norm by $\|\cdot\|_X$, its dual space by $X'$
	and the duality pairing between $X'$ and $X$ by
	$\langle\cdot,\cdot\rangle_{X',X}$. If $X$ is a Hilbert space,
	its inner product will be denoted by $(\cdot,\cdot)_X$.
	The space $L^q(0,T;X)$ ($1\leq q\leq +\infty$)
	denotes the set of all strongly measurable $q$-integrable functions with
	values in $X$, or, if $q=+\infty$, essentially bounded functions.
	The space $C([0,T];X)$ denotes the Banach space of all bounded and
	continuous functions $u:[ 0,T] \rightarrow X$ equipped with the supremum
	norm, while $C_{w}([0,T];X)$ denotes the topological vector space of all
	bounded and weakly continuous functions.
	
	Let $\Omega$ be a bounded domain in $\mathbb{R}^d$ ($d\in \{2,3\}$) with a sufficiently smooth boundary $\Gamma:=\partial \Omega$. We use $|\Omega|$ and $|\Gamma|$
	to denote the Lebesgue measure of $\Omega$ and the Hausdorff measure of $\Gamma$, respectively.
	For any $1\leq q\leq +\infty$, $k\in \mathbb{N}$, the standard Lebesgue and Sobolev spaces on $\Omega$ are denoted by $L^{q}(\Omega )$ and $W^{k,q}(\Omega)$. Here, we use $\mathbb{N}$ for the set of natural numbers including zero.
	For $s\geq 0$ and $q\in [1,+\infty )$, we denote by $H^{s,q}(\Omega )$ the Bessel-potential spaces and by $W^{s,q}(\Omega )$ the Slobodeckij spaces.
	If $q=2$, it holds $H^{s,2}(\Omega)=W^{s,2}(\Omega )$ for all $s$ and these spaces are Hilbert spaces.
	We shall use the notation $H^s(\Omega)=H^{s,2}(\Omega)=W^{s,2}(\Omega )$ and $H^0(\Omega)$ can be identified with $L^2(\Omega)$.
	The Lebesgue spaces, Sobolev spaces, and Slobodeckij spaces on the boundary $\Gamma$ can be defined analogously,
	provided that $\Gamma$ is sufficiently regular.
	We write $H^s(\Gamma)=H^{s,2}(\Gamma)=W^{s,2}(\Gamma)$ and identify $H^0(\Gamma)$ with $L^2(\Gamma)$.
	Hereafter, the following shortcuts will be applied:
	\begin{align*}
		&H:=L^{2}(\Omega),\quad H_{\Gamma}:=L^{2}(\Gamma),\quad V:=H^{1}(\Omega),\quad V_{\Gamma}:=H^{1}(\Gamma).
	\end{align*}
	Next, we introduce the product spaces
	$$\mathcal{L}^{q}:=L^{q}(\Omega)\times L^{q}(\Gamma)\quad\mathrm{and}\quad\mathcal{H}^{k}:=H^{k}(\Omega)\times H^{k}(\Gamma),$$
	for $q\in [1,+\infty]$ and $k\in \mathbb{N}$.
	Like before, we can identify  $\mathcal{H}^{0}$ with $\mathcal{L}^{2}$.
	For any $k\in \mathbb{N}$, $\mathcal{H}^{k}$ is a Hilbert space endowed with the standard inner product
	$$
	((y,y_{\Gamma}),(z,z_{\Gamma}))_{\mathcal{H}^{k}}:=(y,z)_{H^{k}(\Omega)}+(y_{\Gamma},z_{\Gamma})_{H^{k}(\Gamma)},\quad\forall\, (y,y_{\Gamma}), (z,z_{\Gamma})\in\mathcal{H}^{k}
	$$
	and the induced norm $\Vert\cdot\Vert_{\mathcal{H}^{k}}:=(\cdot,\cdot)_{\mathcal{H}^{k}}^{1/2}$.
	We introduce the duality pairing
	\begin{align*}
		\langle (y,y_\Gamma),(\zeta, \zeta_\Gamma)\rangle_{(\mathcal{H}^1)',\mathcal{H}^1}
		= (y,\zeta)_{L^2(\Omega)}+ (y_\Gamma, \zeta_\Gamma)_{L^2(\Gamma)},
		\quad \forall\, (y,y_\Gamma)\in \mathcal{L}^2,\ (\zeta, \zeta_\Gamma)\in \mathcal{H}^1.
	\end{align*}
	By the Riesz representation theorem, this product
	can be extended to a duality pairing on $(\mathcal{H}^1)'\times \mathcal{H}^1$.
	
	For any $k\in\mathbb{Z}^+$, we introduce the Hilbert space
	$$
	\mathcal{V}^{k}:=\big\{(y,y_{\Gamma})\in\mathcal{H}^{k}\;:\;y|_{\Gamma}=y_{\Gamma}\ \ \text{a.e. on }\Gamma\big\},
	$$
	endowed with the inner product $(\cdot,\cdot)_{\mathcal{V}^{k}}:=(\cdot,\cdot)_{\mathcal{H}^{k}}$ and the associated norm $\Vert\cdot\Vert_{\mathcal{V}^{k}}:=\Vert\cdot\Vert_{\mathcal{H}^{k}}$.
	Here, $y|_{\Gamma}$ stands for the trace of $y\in H^k(\Omega)$ on the boundary $\Gamma$, which makes sense for $k\in \mathbb{Z}^+$.
	The duality pairing on $(\mathcal{V}^1)'\times \mathcal{V}^1$ can be defined similarly.
	
	For any given $m\in\mathbb{R}$, we set
	$$
	\mathcal{L}^{2}_{(m)}:=\big\{(y,y_{\Gamma})\in\mathcal{L}^{2}\;:\;\overline{m}(y,y_{\Gamma})=m\big\},
	$$
	where the generalized mean is defined as
	\begin{align}
		\overline{m}(y,y_{\Gamma}):=\frac{|\Omega|\langle y\rangle_{\Omega}+|\Gamma|\langle y_{\Gamma}\rangle_{\Gamma}}{|\Omega|+|\Gamma|},\label{gmean}
	\end{align}
    with
    \[\langle y\rangle_\Omega=\frac{1}{|\Omega|}\langle y,1\rangle_{V',V},\quad \langle y_\Gamma\rangle_\Gamma=\frac{1}{|\Gamma|}\langle y_\Gamma,1\rangle_{V_\Gamma',V_\Gamma}.\]
	Then we define the projection operator $\mathbf{P}:\mathcal{L}^{2}\rightarrow\mathcal{L}^{2}_{(0)}$ by
	\begin{align*}
		\mathbf{P}(y,y_{\Gamma})=(y-\overline{m}(y,y_{\Gamma}),y_{\Gamma}-\overline{m}(y,y_{\Gamma})),\quad\forall\,(y,y_\Gamma)\in \mathcal{L}^2.
	\end{align*}
	The closed linear subspaces
	$$
	\mathcal{H}_{(0)}^k=\mathcal{H}^{k}\cap\mathcal{L}_{(0)}^{2},
	\qquad \mathcal{V}_{(0)}^k=\mathcal{V}^{k}\cap\mathcal{L}_{(0)}^{2},
	\qquad k\in \mathbb{Z}^+,
	$$
	are Hilbert spaces endowed with the inner products $(\cdot,\cdot)_{\mathcal{H}^{k}}$
	and the associated norms $\Vert\cdot\Vert_{\mathcal{H}^{k}}$, respectively.
	For $L\in [0,+\infty)$ and $k\in \mathbb{Z}^+$,  we introduce the notation
	$$
	\mathcal{H}^{k}_{L}:=
	\begin{cases}
		\mathcal{H}^k,\quad \text{if}\ L\in (0,+\infty),\\
		\mathcal{V}^{k},\quad \ \text{if}\ L=0,
	\end{cases}\qquad
	\mathcal{H}^{k}_{L,0}:=
	\begin{cases}
		\mathcal{H}_{(0)}^k,\quad \text{if}\ L\in (0,+\infty),\\
		\mathcal{V}^{k}_{(0)},\quad \ \text{if}\ L=0.
	\end{cases}
	$$
	Consider the bilinear form
	\begin{align}
		a_{L}((y,y_{\Gamma}),(z,z_{\Gamma}))
        & :=\int_{\Omega}\nabla y\cdot\nabla z \,\mathrm{d}x +\int_{\Gamma}\nabla_{\Gamma}y_{\Gamma}\cdot\nabla_{\Gamma}z_{\Gamma}\,\mathrm{d}S
        \notag\\
        & \quad \ +\chi(L)\int_{\Gamma}(y-y_{\Gamma})(z-z_{\Gamma})\,\mathrm{d}S,
		\notag
	\end{align}
	for all $(y,y_{\Gamma}), (z,z_{\Gamma})\in \mathcal{H}^{1}$, where
	$$
	\chi(L)=\begin{cases}
		1/L,\quad \text{if}\ L\in (0,+\infty),\\
		0,\qquad \ \text{if}\ L=0.
	\end{cases}
	$$
	For $L\in [0,+\infty)$, we define the inner product on $\mathcal{H}_{L,0}^1$ by $(\cdot,\cdot)_{\mathcal{H}_{L,0}^1}:=a_L(\cdot,\cdot)$, and for any $(y,y_{\Gamma})\in \mathcal{H}^{1}_{L,0}$, we define
	\begin{align}
		\Vert(y,y_{\Gamma})\Vert_{\mathcal{H}^{1}_{L,0}}:=((y,y_{\Gamma}),(y,y_{\Gamma}))_{\mathcal{H}^{1}_{L,0}}^{1/2}= [ a_{L}((y,y_{\Gamma}),(y,y_{\Gamma}))]^{1/2}.
		\label{norm-hL}
	\end{align}
	We note that for $(y,y_{\Gamma})\in \mathcal{V}_{(0)}^1\subseteq\mathcal{H}^{1}_{L,0}$,
    $\Vert(y,y_{\Gamma})\Vert_{\mathcal{H}^{1}_{L,0}}$ does not depend on $L$,
    since the third term in $a_L$ vanishes.
	The following Poincar\'{e}-type inequality has been proved in \cite[Lemma A.1]{KL}.
	\begin{lemma}
		There exists a constant $C_\mathrm{P}>0$ depending only on $L\in [0,+\infty)$ and $\Omega$ such that
		\begin{align}
			\|(y,y_{\Gamma})\|_{\mathcal{L}^2}\leq C_\mathrm{P} \Vert(y,y_{\Gamma})\Vert_{\mathcal{H}^{1}_{L,0}},\quad \forall\, (y,y_{\Gamma})\in \mathcal{H}^{1}_{L,0}.
			\label{Poin}
		\end{align}
	\end{lemma}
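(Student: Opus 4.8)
The plan is to argue by contradiction via a standard compactness argument, treating the two cases $L\in(0,+\infty)$ and $L=0$ in parallel. Suppose the asserted inequality fails for every choice of constant. Then for each $n\in\mathbb{N}$ there is $(y_n,y_{\Gamma,n})\in\mathcal{H}^1_{L,0}$ with
\begin{align}
\|(y_n,y_{\Gamma,n})\|_{\mathcal{L}^2}=1,\qquad \|(y_n,y_{\Gamma,n})\|_{\mathcal{H}^1_{L,0}}\leq \tfrac{1}{n}.\notag
\end{align}
Recalling \eqref{norm-hL} and the definition of $a_L$, the second bound yields
\begin{align}
\|\nabla y_n\|_{L^2(\Omega)}\to 0,\qquad \|\nabla_\Gamma y_{\Gamma,n}\|_{L^2(\Gamma)}\to 0,\qquad \chi(L)\,\|y_n|_\Gamma-y_{\Gamma,n}\|_{L^2(\Gamma)}^2\to 0.\notag
\end{align}
Together with the normalization in $\mathcal{L}^2$, the sequence is bounded in $\mathcal{H}^1$ when $L\in(0,+\infty)$ and in $\mathcal{V}^1$ when $L=0$.

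I would then invoke the compact embeddings $H^1(\Omega)\hookrightarrow\hookrightarrow L^2(\Omega)$ and $H^1(\Gamma)\hookrightarrow\hookrightarrow L^2(\Gamma)$ to extract a subsequence (not relabeled) such that $y_n\to y$ in $L^2(\Omega)$ with $y_n\rightharpoonup y$ in $H^1(\Omega)$, and $y_{\Gamma,n}\to y_\Gamma$ in $L^2(\Gamma)$ with $y_{\Gamma,n}\rightharpoonup y_\Gamma$ in $H^1(\Gamma)$. Since $\nabla y_n\to 0$ in $L^2(\Omega)$ while $y_n\rightharpoonup y$ in $H^1(\Omega)$, the limit obeys $\nabla y=0$; as $\Omega$ is connected, $y\equiv c_1$ is constant. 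Likewise $\nabla_\Gamma y_\Gamma=0$, so $y_\Gamma$ is constant on each connected component of $\Gamma$.

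The decisive step is to identify these constants and show they vanish. When $L\in(0,+\infty)$, from $\chi(L)=1/L>0$ we get $y_n|_\Gamma-y_{\Gamma,n}\to 0$ strongly in $L^2(\Gamma)$; since $y_{\Gamma,n}\to y_\Gamma$ strongly, it follows that $y_n|_\Gamma\to y_\Gamma$ strongly in $L^2(\Gamma)$. On the other hand the trace operator $H^1(\Omega)\to L^2(\Gamma)$ is bounded, hence weakly continuous, so $y_n|_\Gamma\rightharpoonup y|_\Gamma\equiv c_1$; uniqueness of limits gives $y_\Gamma\equiv c_1$ on all of $\Gamma$. When $L=0$, the constraint $y_n|_\Gamma=y_{\Gamma,n}$ built into $\mathcal{V}^1$ passes to the limit by the same weak trace continuity, again giving $y_\Gamma\equiv c_1=:c$. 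Finally, the zero generalized mean $\overline{m}(y_n,y_{\Gamma,n})=0$ is preserved in the limit, since \eqref{gmean} defines a bounded linear functional on $\mathcal{L}^2$ and $(y_n,y_{\Gamma,n})\to(y,y_\Gamma)$ strongly in $\mathcal{L}^2$. Hence
\begin{align}
0=\overline{m}(y,y_\Gamma)=\frac{|\Omega|\,c+|\Gamma|\,c}{|\Omega|+|\Gamma|}=c,\notag
\end{align}
so $(y,y_\Gamma)=(0,0)$. This contradicts $\|(y,y_\Gamma)\|_{\mathcal{L}^2}=\lim_{n\to\infty}\|(y_n,y_{\Gamma,n})\|_{\mathcal{L}^2}=1$, proving the claim.

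I expect the only delicate point to be the passage to the limit in the coupling term (for $L>0$) and in the trace constraint (for $L=0$): both rely on the fact that the trace map is a bounded linear operator from $H^1(\Omega)$ to $L^2(\Gamma)$ and is therefore weakly continuous, so that weak $H^1$-convergence of the bulk component transfers to weak $L^2(\Gamma)$-convergence of its trace. With this in hand the two cases are handled uniformly, and the compactness argument produces, for each fixed $L\in[0,+\infty)$, a finite constant $C_\mathrm{P}$ depending only on $L$ and $\Omega$, as stated.
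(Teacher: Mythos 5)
Your compactness-and-contradiction argument is correct and complete: the normalization, the extraction of a weakly $\mathcal{H}^1$-convergent and strongly $\mathcal{L}^2$-convergent subsequence, the identification of the limit as a constant pair via the vanishing gradients and the coupling term (resp.\ the trace constraint when $L=0$), and the use of the preserved zero generalized mean to force that constant to vanish are all sound. Note, however, that the paper offers no proof of its own here --- it simply cites \cite[Lemma A.1]{KL} --- so there is nothing to compare against beyond observing that your argument is the standard one for such bulk--surface Poincar\'e inequalities and would serve as a self-contained substitute for the citation.
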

	\noindent
	Hence, for every $L\in [0,+\infty)$, $\mathcal{H}^{1}_{L,0}$ is a Hilbert space
    with the inner product $(\cdot,\cdot)_{\mathcal{H}^{1}_{L,0}}$.
    The induced norm $\Vert\cdot\Vert_{\mathcal{H}^{1}_{L,0}}$ prescribed in \eqref{norm-hL}
    is equivalent to the standard one $\Vert\cdot\Vert_{\mathcal{H}^{1}}$ on $\mathcal{H}^{1}_{L,0}$.

	For $L\in[0,+\infty)$, let us consider the following elliptic boundary value problem
	\begin{align}
		\begin{cases}
			-\Delta u = y,&\quad \text{in }\Omega,\\
			-\Delta_{\Gamma}u_\Gamma +\partial_{\mathbf{n}}u= y_{\Gamma},&\quad \text{on }\Gamma,\\
			L\partial_{\mathbf{n}}u =u_\Gamma-u, &\quad \mathrm{on\;}\Gamma.
		\end{cases}
		\label{Elliptic}
	\end{align}
	Define the space
	$$
	\mathcal{H}_{L,0}^{-1}=
	\begin{cases}
		\mathcal{H}_{(0)}^{-1} :=\big\{(y,y_{\Gamma})\in(\mathcal{H}^{1})'\;:\;\overline{m}(y,y_{\Gamma})=0\big\},\quad \text{if}\ L\in(0,+\infty),\\
		\mathcal{V}_{(0)}^{-1} :=\big\{(y,y_{\Gamma})\in(\mathcal{V}^{1})'\;:\;\overline{m}(y,y_{\Gamma})=0\big\},\quad \ \,\text{if}\ L=0,
	\end{cases}
	$$
	where $\overline{m}$ is given by \eqref{gmean} if $L\in (0,+\infty)$, and for $L=0$, we take
	$$
	\overline{m}(y,y_{\Gamma})=\frac{\langle (y,y_{\Gamma}),(1,1)\rangle_{(\mathcal{V}^{1})', \mathcal{V}^{1}}}{|\Omega|+|\Gamma|}.
	$$
	Then the chain of inclusions holds
	$$
	\mathcal{H}^{1}_{L,0} \subset \mathcal{L}_{(0)}^2\subset \mathcal{H}_{L,0}^{-1} \subset (\mathcal{H}_{L}^{1})'.
	$$
	It has been shown in \cite[Theorem 3.3]{KL} that for every $(y,y_{\Gamma})\in\mathcal{H}_{L,0}^{-1}$,
    problem \eqref{Elliptic} admits a unique weak solution $(u,u_\Gamma)\in\mathcal{H}_{L,0}^{1}$ satisfying the weak formulation
	\begin{align}
		a_L((u,u_{\Gamma}),(\zeta,\zeta_{\Gamma})) = \langle(y,y_{\Gamma}),(\zeta,\zeta_{\Gamma})\rangle_{(\mathcal{H}_L^{1})',\mathcal{H}_L^{1}},
		\quad \forall\, (\zeta,\zeta_{\Gamma})\in\mathcal{H}_L^{1},
		\notag
	\end{align}
	and the $\mathcal{H}^{1}$-estimate
	\begin{align}
		\|(u,u_{\Gamma})\|_{\mathcal{H}^1}\leq C\|(y,y_{\Gamma})\|_{(\mathcal{H}^{1}_L)'},\notag
	\end{align}
	for some constant $C>0$ depending only on $L$ and $\Omega$.
    Furthermore, if the domain $\Omega$ is of class $C^{k+2}$ and $(y,y_{\Gamma})\in \mathcal{H}_{L,0}^{k}$, $k\in \mathbb{N}$,
    then $(u,u_\Gamma)\in \mathcal{H}^{k+2}$ and the following regularity estimate holds
	\begin{align}
		\|(u,u_{\Gamma})\|_{\mathcal{H}^{k+2}}\leq C\|(y,y_{\Gamma})\|_{\mathcal{H}^{k}}.
		\notag
	\end{align}
	The above facts enable us to define the solution operator $$\mathfrak{S}^{L}:\mathcal{H}_{L,0}^{-1}\rightarrow\mathcal{H}_{L,0}^{1},\quad(y,y_{\Gamma})\mapsto (u,u_\Gamma)=\mathfrak{S}^{L}(y,y_{\Gamma})=(\mathfrak{S}^{L}_{\Omega}(y,y_{\Gamma}),\mathfrak{S}^{L}_{\Gamma}(y,y_{\Gamma})).
	$$
	Similar results for the special case $L=0$ have also been presented in \cite{CF15}.
    A direct calculation yields that
	$$
	((u,u_{\Gamma}), (z,z_\Gamma))_{\mathcal{L}^2}
	=((u,u_{\Gamma}), \mathfrak{S}^{L}(z,z_\Gamma))_{\mathcal{H}^1_{L,0}},\quad \forall\, (u,u_{\Gamma})\in \mathcal{H}_{L,0}^1,\ (z,z_\Gamma)\in \mathcal{L}^2_{(0)}.
	$$
	Thanks to \cite[Corollary 3.5]{KL}, we can introduce the inner product on $\mathcal{H}_{L,0}^{-1}$  as
	\begin{align}
		((y,y_{\Gamma}),(z,z_{\Gamma}))_{L,0,*}&:=(\mathfrak{S}^{L}(y,y_{\Gamma}),\mathfrak{S}^{L}(z,z_{\Gamma}))_{\mathcal{H}^{1}_{L,0}},
		\quad \forall\, (y,y_{\Gamma}), (z,z_{\Gamma})\in \mathcal{H}_{L,0}^{-1}.\notag
	\end{align}
	The associated norm $\Vert(y,y_{\Gamma})\Vert_{L,0,*} :=((y,y_{\Gamma}),(y,y_{\Gamma}))_{L,0,*}^{1/2}$
	is equivalent to the standard dual norm $\|\cdot\|_{(\mathcal{H}_L^1)'}$ on $\mathcal{H}_{L,0}^{-1}$.
	Then it follows that
	\begin{align}
		\|(y,y_{\Gamma})\|_{L,\ast}&:=\left(\Vert(y,y_{\Gamma})-\overline{m}(y,y_{\Gamma}) \mathbf{1}\Vert_{L,0,*}^2+ |\overline{m}(y,y_{\Gamma})|^2\right)^{1/2},
		\quad \forall\, (y,y_{\Gamma})\in (\mathcal{H}_L^{1})',\notag
	\end{align}
	is equivalent to the usual dual norm $\|\cdot\|_{(\mathcal{H}_L^1)'}$ on $(\mathcal{H}_L^{1})'$.
   Finally, let us introduce the following higher-order function space
    \[\mathcal{W}_{L,\mathbf{n}}^2:=\{\boldsymbol{z}=(z,z_\Gamma)\in \mathcal{H}^2:L\partial_\mathbf{n} z=z_\Gamma-z\ \text{ a.e. on }\ \Gamma\}.\]
    Then, we have the following density result.
    \begin{lemma}
        \label{density}
        For any $L\in(0,+\infty)$, $\mathcal{W}_{L,\mathbf{n}}^2$ is dense in $\mathcal{H}^1$.
        In particular, the following chain of inclusions holds
        \begin{align}
            \mathcal{W}_{L,\mathbf{n}}^2\subset \mathcal{H}^1\subset\mathcal{L}^2\subset(\mathcal{H}^1)'\subset(\mathcal{W}_{L,\mathbf{n}}^2)'.\label{chain}
        \end{align}
    \end{lemma}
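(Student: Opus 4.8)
The plan is to exploit the elliptic solution operator $\mathfrak{S}^L$ introduced above, whose range automatically satisfies the boundary identity defining $\mathcal{W}_{L,\mathbf{n}}^2$, and then to promote density from the mean-zero subspace to all of $\mathcal{H}^1$ by splitting off the generalized mean. Throughout I fix $L\in(0,+\infty)$, so that $\mathcal{H}^1_{L,0}=\mathcal{H}^1_{(0)}$ and $\mathcal{H}^{-1}_{L,0}=\mathcal{H}^{-1}_{(0)}$.

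First I would record the key structural observation. Taking $k=0$ in the regularity estimate for \eqref{Elliptic}, every datum $(y,y_\Gamma)\in\mathcal{H}^0_{L,0}=\mathcal{L}^2_{(0)}$ produces a solution $(u,u_\Gamma)=\mathfrak{S}^L(y,y_\Gamma)\in\mathcal{H}^2$. Since $u\in H^2(\Omega)$, the trace $u|_\Gamma$ and the normal derivative $\partial_\mathbf{n}u$ are well defined in $L^2(\Gamma)$, so the third equation of \eqref{Elliptic} yields $L\partial_\mathbf{n}u=u_\Gamma-u$ a.e. on $\Gamma$. Hence $\mathfrak{S}^L(\mathcal{L}^2_{(0)})\subset\mathcal{W}_{L,\mathbf{n}}^2\cap\mathcal{H}^1_{(0)}$.

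Next I would upgrade this inclusion to a density statement on the mean-zero subspace. By the very definition of $(\cdot,\cdot)_{L,0,*}$, the operator $\mathfrak{S}^L:\mathcal{H}^{-1}_{L,0}\to\mathcal{H}^1_{L,0}$ is an isometric isomorphism, hence a homeomorphism that carries dense sets onto dense sets. Since $\mathcal{L}^2_{(0)}$ is dense in $\mathcal{H}^{-1}_{L,0}$ (the standard Gelfand-triple density associated with $\mathcal{H}^1_{L,0}\hookrightarrow\mathcal{L}^2_{(0)}\hookrightarrow\mathcal{H}^{-1}_{L,0}$, using reflexivity of $\mathcal{H}^1_{L,0}$), it follows that $\mathfrak{S}^L(\mathcal{L}^2_{(0)})$ is dense in $\mathcal{H}^1_{L,0}=\mathcal{H}^1_{(0)}$. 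Combined with the previous step, $\mathcal{W}_{L,\mathbf{n}}^2\cap\mathcal{H}^1_{(0)}$ is dense in $\mathcal{H}^1_{(0)}$.

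It then remains to pass from the mean-zero subspace to all of $\mathcal{H}^1$. Any constant pair $(c,c)$ lies in $\mathcal{W}_{L,\mathbf{n}}^2$ because $\partial_\mathbf{n}c=0=c-c$, and adding $(c,c)$ to an element of $\mathcal{W}_{L,\mathbf{n}}^2$ preserves the defining identity $L\partial_\mathbf{n}z=z_\Gamma-z$. Thus for arbitrary $(f,f_\Gamma)\in\mathcal{H}^1$ I would write $(f,f_\Gamma)=\mathbf{P}(f,f_\Gamma)+\overline{m}(f,f_\Gamma)\,\mathbf{1}$ with $\mathbf{P}(f,f_\Gamma)\in\mathcal{H}^1_{(0)}$, approximate $\mathbf{P}(f,f_\Gamma)$ in $\mathcal{H}^1$ by a sequence $w_n\in\mathcal{W}_{L,\mathbf{n}}^2\cap\mathcal{H}^1_{(0)}$, and observe that $w_n+\overline{m}(f,f_\Gamma)\,\mathbf{1}\in\mathcal{W}_{L,\mathbf{n}}^2$ converges to $(f,f_\Gamma)$ in $\mathcal{H}^1$, which establishes the asserted density. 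Finally, \eqref{chain} follows formally: $\mathcal{W}_{L,\mathbf{n}}^2\subset\mathcal{H}^2\subset\mathcal{H}^1$ and $\mathcal{H}^1\subset\mathcal{L}^2\subset(\mathcal{H}^1)'$ are the usual continuous dense embeddings, while the continuous dense embedding $\mathcal{W}_{L,\mathbf{n}}^2\hookrightarrow\mathcal{H}^1$ just obtained dualizes to the injective continuous embedding $(\mathcal{H}^1)'\hookrightarrow(\mathcal{W}_{L,\mathbf{n}}^2)'$. The crux of the whole argument—and the only point requiring care—is the $k=0$ regularity that places $\mathfrak{S}^L(\mathcal{L}^2_{(0)})$ in $\mathcal{H}^2$, since this is precisely what guarantees the third boundary equation holds a.e. and hence that the range lands inside $\mathcal{W}_{L,\mathbf{n}}^2$; the remaining mean-value bookkeeping is routine.
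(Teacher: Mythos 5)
Your argument is correct, but it proceeds by a genuinely different route from the paper. The paper proves density by a \emph{parabolic} smoothing argument: it first approximates $\boldsymbol{z}\in\mathcal{H}^1$ by some $\boldsymbol{z}_n\in\mathcal{H}^2$, then solves the bulk--surface heat system \eqref{dense-1} with initial datum $\boldsymbol{z}_n$, uses parabolic regularity to get $\boldsymbol{u}_n\in C([0,1];\mathcal{H}^2)$ satisfying $L\partial_{\mathbf{n}}u=u_\Gamma-u$ for positive times, and takes $\boldsymbol{u}_n(1/k)$ for $k$ large. You instead use the \emph{elliptic} solution operator: the $k=0$ regularity estimate for \eqref{Elliptic} places $\mathfrak{S}^L(\mathcal{L}^2_{(0)})$ inside $\mathcal{W}_{L,\mathbf{n}}^2\cap\mathcal{H}^1_{(0)}$, the isometry $\mathfrak{S}^L:\mathcal{H}^{-1}_{L,0}\to\mathcal{H}^1_{L,0}$ transports the Gelfand-triple density of $\mathcal{L}^2_{(0)}$ in $\mathcal{H}^{-1}_{L,0}$ to density of the image in $\mathcal{H}^1_{(0)}$, and adding constants (which lie in $\mathcal{W}_{L,\mathbf{n}}^2$ and preserve the boundary identity) recovers all of $\mathcal{H}^1$. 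Your route has the advantage of relying only on facts already quoted from \cite[Theorem 3.3]{KL}, with no auxiliary evolution problem to set up; the paper's route avoids the duality and mean-value bookkeeping and produces approximants directly for arbitrary data, even in the $\mathcal{H}^2$-topology at the intermediate stage. The only point you state without justification is the surjectivity of $\mathfrak{S}^L$ onto $\mathcal{H}^1_{L,0}$ (needed so that a dense subset of $\mathcal{H}^{-1}_{L,0}$ maps to a dense subset of the \emph{whole} target): it follows in one line, since for $(u,u_\Gamma)\in\mathcal{H}^1_{L,0}$ the functional $(\zeta,\zeta_\Gamma)\mapsto a_L((u,u_\Gamma),(\zeta,\zeta_\Gamma))$ annihilates $(1,1)$, hence belongs to $\mathcal{H}^{-1}_{L,0}$ and is a preimage of $(u,u_\Gamma)$. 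With that remark added, your proof is complete.
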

    \begin{proof}
        Let $\boldsymbol{z}\in\mathcal{H}^1$ be arbitrary, for any $n\in\mathbb{Z}^+$,
        there exists a $\boldsymbol{z}_n\in\mathcal{H}^2$ such that $\|\boldsymbol{z}_n-\boldsymbol{z}\|_{\mathcal{H}^1}\leq 1/2n$.
        Then, we consider the following bulk-surface parabolic system
        \begin{align}
            \begin{cases}
                \partial_t u-\Delta u=0,&\text{in }\Omega\times(0,1),\\
                L\partial_\mathbf{n}u=u_\Gamma-u,&\text{on }\Gamma\times(0,1),\\
                \partial_t u_\Gamma-\Delta_\Gamma u_\Gamma+\partial_\mathbf{n}u=0,&\text{on }\Gamma\times(0,1),\\
                (u,u_\Gamma)|_{t=0}=(z_n,z_{\Gamma,n}),&\text{in }\Omega\times\Gamma.
            \end{cases}\label{dense-1}
        \end{align}
By the standard Faedo--Galerkin method, we can show that problem \eqref{dense-1} admits a unique weak solution $\boldsymbol{u}_n$ satisfying $\boldsymbol{u}_n\in {L}^2(0,1;\mathcal{H}^1)\cap L^\infty(0,1;\mathcal{L}^2)$ and $\partial_t\boldsymbol{u}_n\in L^2(0,1;(\mathcal{H}^1)')$. As the initial datum $\boldsymbol{z}_n\in\mathcal{H}^2$, we can improve the regularity of $\boldsymbol{u}_n$ and conclude that
        \[\boldsymbol{u}_n\in L^2(0,1;\mathcal{H}^3)\cap L^\infty(0,1;\mathcal{H}^2),\quad\partial_t \boldsymbol{u}_n\in L^2(0,1;\mathcal{H}^1)\cap L^\infty(0,1;\mathcal{L}^2).\]
        By the Lions--Magenes theorem, it holds
        $\boldsymbol{u}_n\in C([0,1];\mathcal{H}^2)$.
        Then, there exists a sufficiently large $k\in\mathbb{Z}^+$ such that $\|\boldsymbol{u}_n(1/{k})-\boldsymbol{z}_n\|_{\mathcal{H}^2}\leq 1/2n$.
        Consequently, we conclude that
        $$\left\|\boldsymbol{z}-\boldsymbol{u}_n\left(\frac{1}{k}\right)\right\|_{\mathcal{H}^1}\leq \|\boldsymbol{z}-\boldsymbol{z}_n\|_{\mathcal{H}^1} +\left\|\boldsymbol{z}_n-\boldsymbol{u}_n\left(\frac{1}{k}\right)\right\|_{\mathcal{H}^1}\leq \frac{1}{n},$$
         and $\boldsymbol{u}_n(1/k)\in \mathcal{W}_{L,\mathbf{n}}^2$.
         As a consequence, $\mathcal{W}_{L,\mathbf{n}}^2$ is dense in $\mathcal{H}^1$,
         and the chain of inclusions \eqref{chain} holds. The proof of Lemma \ref{density} is complete.
    \end{proof}

    \begin{remark}\rm
    \label{compact-embedding}
        The property \eqref{chain} together with the Aubin--Lions--Simon lemma
        implies that the function space $\mathbb{V}_1$ is compactly embedded into $\mathbb{V}$,
        where
        $$\mathbb{V}_1:=L^2(0,T;\mathcal{L}^2)\cap H^1(0,T;(\mathcal{W}_{L,\mathbf{n}}^2)'),\quad\mathbb{V}:=L^2(0,T;(\mathcal{H}^1)'),$$
        for any $T\in(0,+\infty)$.
        The compact embedding $\mathbb{V}_1\hookrightarrow\hookrightarrow\mathbb{V}$ is crucial for us to apply Lemma \ref{abstract} to prove the existence of (discrete) exponential attractors. We note that Lemma \ref{density} does not hold in the case $L=0$. Instead, $\mathcal{W}_{0,\mathbf{n}}^2=\mathcal{V}^2$ is dense in $\mathcal{V}^1$ when $L=0$.
    \end{remark}

	\subsection{Problem setting}
    For an arbitrary but given final time $T\in(0,+\infty)$,
    we denote
    $$Q_T:=\Omega\times(0,T)\quad \text{and}\quad \Sigma_T:=\Gamma\times(0,T).
    $$
    If $T=+\infty$, we simply set $$Q:=\Omega\times(0,+\infty)\quad \text{and}\quad \Sigma:=\Gamma\times(0,+\infty).
    $$
	In view of the decomposition of the bulk and surface potentials
	$$F=\widehat{\beta}+\widehat{\pi},\quad G=\widehat{\beta}_{\Gamma}+\widehat{\pi}_{\Gamma},$$
	we reformulate our target problem as follows:
	\begin{align}
		\begin{cases}
		\partial_{t}\varphi=\Delta\mu,&\text{in }Q,\\
		\mu=a_{\Omega}\varphi-J\ast\varphi+\beta(\varphi)+\pi(\varphi),&\text{in }Q,\\
		\partial_{t}\psi=\Delta_{\Gamma}\theta-\partial_{\mathbf{n}}\mu,&\text{on }\Sigma,\\
		\theta=a_{\Gamma}\psi-K\circledast\psi+\beta_{\Gamma}(\psi)+\pi_{\Gamma}(\psi),&\text{on }\Sigma,\\
		L\partial_{\mathbf{n}}\mu=\theta-\mu,\quad L\in[0,+\infty),
		&\text{on }\Sigma,\\
		\varphi|_{t=0}=\varphi_{0},&\text{in }\Omega,\\
		\psi|_{t=0}=\psi_{0},&\text{on }\Gamma.
		\end{cases}\label{model}
	\end{align}
	Then the total free energy of the system \eqref{model} can be expressed equivalently as
	\begin{align}
		E\big(\boldsymbol{\varphi}\big)&=\frac{1}{2}\int_{\Omega}a_{\Omega}\varphi^{2}\,\mathrm{d}x-\frac{1}{2}\int_{\Omega}(J\ast\varphi)\varphi\,\mathrm{d}x+\int_{\Omega}(\widehat{\beta}(\varphi)+\widehat{\pi}(\varphi))\,\mathrm{d}x\notag\\
		&\quad+\frac{1}{2}\int_{\Gamma}a_{\Gamma}\psi^{2}\,\mathrm{d}S-\frac{1}{2}\int_{\Gamma}(K\circledast\psi)\psi\,\mathrm{d}S+\int_{\Gamma}(\widehat{\beta}_{\Gamma}(\psi)+\widehat{\pi}_{\Gamma}(\psi))\,\mathrm{d}S.\notag
	\end{align}
	Throughout this paper, we make the following basic assumptions.
	\begin{description}
		\item[$\mathbf{(A1)}$] The convolution kernels $J,K:\mathbb{R}^{d}\rightarrow\mathbb{R}$ are even,
        i.e., $J(x)=J(-x)$ and $K(x)=K(-x)$ for almost all $x\in\mathbb{R}^{d}$,
        nonnegative almost everywhere and satisfy $J\in W^{1,1}(\mathbb{R}^{d})$ and $K\in W^{2,r}(\mathbb{R}^{d})$ with $r>1$.
        We note that the regularity assumption on $K$ is higher than that on $J$
        since the traces $K(x-\cdot)|_\Gamma$ and $\nabla_{\Gamma}K(x-\cdot)|_\Gamma$ must exist
        and belong to $L^r(\Gamma)$ for all $x\in\Gamma$ (cf. \cite{KS}).
        In addition, we suppose that
		\begin{align}
			&a_{\ast}:=\inf_{x\in\Omega}\int_{\Omega}J(x-y)\,\mathrm{d}y>0,&&a_{\circledast}:=\inf_{x\in\Gamma}\int_{\Gamma}K(x-y)\,\mathrm{d}S_{y}>0,\label{2.1}\\
			&a^{\ast}:=\sup_{x\in\Omega}\int_{\Omega}J(x-y)\,\mathrm{d}y<+\infty,&&a^{\circledast}:=\sup_{x\in\Gamma}\int_{\Gamma}K(x-y)\,\mathrm{d}S_{y}<+\infty,\label{2.2}\\
			&b^{\ast}:=\sup_{x\in\Omega}\int_{\Omega}|\nabla J(x-y)|\,\mathrm{d}y<+\infty,&&b^{\circledast}:=\sup_{x\in\Gamma}\int_{\Gamma}|\nabla_{\Gamma}K(x-y)|\,\mathrm{d}S_{y}<+\infty.\label{2.3}
		\end{align}
		\item[$\mathbf{(A2)}$] The nonlinear convex functions  $\widehat{\beta}$, $\widehat{\beta}_{\Gamma}\in C([-1,1])\cap C^{2}(-1,1)$.
        Their derivatives are denoted by $\beta=\widehat{\beta}'$, $\beta_{\Gamma}=\widehat{\beta}_{\Gamma}'$
        such that $\beta$, $\beta_{\Gamma}\in C^{1}(-1,1)$ are monotone increasing functions satisfying
		\begin{align*}
			&\lim_{s\rightarrow-1}\beta(s)=-\infty,\quad\lim_{s\rightarrow1}\beta(s)=+\infty,\\
			&\lim_{s\rightarrow-1}\beta_{\Gamma}(s)=-\infty,\quad\lim_{s\rightarrow1}\beta_{\Gamma}(s)=+\infty,
		\end{align*}
		and the derivatives $\beta'$, $\beta_{\Gamma}'$ fulfill
		$$\beta'(s)\geq\alpha, \quad\beta_{\Gamma}'(s)\geq\alpha,\quad\forall\,s\in(-1,1)
        $$
		for some constant $\alpha>0$.
		We also extend $\widehat{\beta}(s)=\widehat{\beta}_\Gamma(s)=+\infty$ for any $s\notin[-1,1]$.
		Without loss of generality, we assume
        $$
        \widehat{\beta}(0)=\widehat{\beta}_\Gamma(0)=\beta(0)=\beta_\Gamma(0)=0.
        $$
		This also entails that $\widehat{\beta}(s)$, $\widehat{\beta}_\Gamma(s)\geq0$ for all $s\in[-1,1]$.
		\item[$\mathbf{(A3)}$] $\widehat{\pi}$, $\widehat{\pi}_{\Gamma}\in C^{1}(\mathbb{R})$
        and $\pi:=\widehat{\pi}'$, $\pi_{\Gamma}:=\widehat{\pi}_{\Gamma}'$ are differentiable and Lipschitz continuous on $\mathbb{R}$ with Lipschitz constants $\gamma_{1}$ and $\gamma_{2}$, respectively.
		Furthermore, $\gamma_{1}$ and $\gamma_{2}$ satisfy
		\begin{align*}
			0<\gamma_{1}<a_{\ast}+\frac{\alpha}{1+\alpha},\quad0<\gamma_{2}<a_{\circledast}+\frac{\alpha}{1+\alpha}.
		\end{align*}
		\item[$\mathbf{(A4)}$] The initial datum $\boldsymbol{\varphi}_{0}=(\varphi_{0},\psi_{0})\in\mathcal{L}^{2}$ satisfies
        \[\widehat{\beta}(\varphi_{0})\in L^{1}(\Omega),\quad\widehat{\beta}_{\Gamma}(\psi_{0})\in L^{1}(\Gamma)\quad\text{and}\quad m_0=\overline{m}(\boldsymbol{\varphi}_{0})\in(-1,1).\]
	\end{description}

    \begin{remark}\rm
        \label{compact-operator}
        Under the assumption $(\mathbf{A1})$, the operator
        \[\mathbb{J}:(\varphi,\psi)\mapsto(J\ast\varphi,K\circledast\psi)\]
        is self-adjoint and compact from $\mathcal{L}^2$ to itself,
        which is a direct corollary of the compact embedding $\mathcal{H}^1\hookrightarrow\hookrightarrow\mathcal{L}^2$.
        According to $J\in W^{1,1}(\mathbb{R}^d)$, $K\in W^{2,r}(\mathbb{R}^d)$ and the Arzel$\grave{\mathrm{a}}$--Ascoli theorem,
        one can easily check that $\mathbb{J}$ is also compact from $\mathcal{L}^\infty$ to $C(\overline{\Omega})\times C(\Gamma)$.
        These results will be used to verify a generalized version of the \L ojasiewicz--Simon inequality (see Lemma \ref{LS}).
    \end{remark}
	
		\begin{definition}\rm
		\label{weakdefn}
		Let $T\in(0,+\infty)$ be an arbitrary but given final time and $L\in[0,+\infty)$.
        The function pair $(\boldsymbol{\varphi},\boldsymbol{\mu})$ is called a weak solution to problem \eqref{model} on $[0,T]$,
		if the following conditions are fulfilled:
		\begin{description}
			\item[$\mathrm{(i)}$] The functions $(\boldsymbol{\varphi},\boldsymbol{\mu})$ have the following regularity
			\begin{align}
				&\boldsymbol{\varphi}\in H^{1}(0,T;(\mathcal{H}_L^{1})')\cap L^{\infty}(0,T;\mathcal{L}^{2})\cap L^{2}(0,T;\mathcal{H}^{1}),\notag\\[1mm]
				&\mu\in L^{2}(0,T;V),\quad\theta\in L^{2}(0,T;V_{\Gamma}),\notag
			\end{align}
			and
			\begin{align}
				&\varphi\in L^{\infty}(Q_T)\ \text{ with }\ |\varphi(x,t)|<1\ \text{ for a.a. } (x,t)\in Q_T,\notag\\[1mm]
				&\psi\in L^{\infty}(\Sigma_T)\ \text{ with }\ |\psi(x,t)|<1\ \text{ for a.a. } (x,t)\in \Sigma_T.\notag
			\end{align}
			\item[$\mathrm{(ii)}$] The following variational formulation
			\begin{align}
				\langle\partial_{t}\boldsymbol{\varphi},\boldsymbol{z}\rangle_{(\mathcal{H}_L^{1})',\mathcal{H}_L^{1}}
                & =-\int_{\Omega}\nabla\mu\cdot\nabla z\,\mathrm{d}x-\int_{\Gamma}\nabla_{\Gamma}\theta\cdot\nabla_{\Gamma} z_{\Gamma}\,\mathrm{d}S
                \notag\\
                &\quad -\chi(L)\int_\Gamma (\theta-\mu)(z_\Gamma-z)\,\mathrm{d}S,\notag
			\end{align}
			holds for all $\boldsymbol{z}\in\mathcal{H}_L^1$ and almost all $t\in(0,T)$.
			The bulk and boundary chemical potentials $\mu$, $\theta$ satisfy
			\begin{align}
				&\mu=a_{\Omega}\varphi-J\ast\varphi+\beta(\varphi)+\pi(\varphi),\qquad\ \ \text{a.e. in }Q_T,\notag\\
				&\theta=a_{\Gamma}\psi-K\circledast\psi+\beta_{\Gamma}(\psi)+\pi_{\Gamma}(\psi),\quad\text{a.e. on }\Sigma_T.\notag
			\end{align}
            Furthermore, the initial conditions $\varphi|_{t=0}=\varphi_0$ and $\psi|_{t=0}=\psi_0$ are satisfied almost everywhere in $\Omega$ and on $\Gamma$, respectively.
			\item[$\mathrm{(iii)}$] The energy equality
			\begin{align}
				E(\boldsymbol{\varphi}(t))+\int_{0}^{t}\Big(\Vert\nabla\mu(s)\Vert_{H}^{2}+\Vert\nabla_{\Gamma}\theta(s)\Vert_{H_{\Gamma}}^{2}+\chi(L)\Vert\theta(s)-\mu(s)\Vert_{H_{\Gamma}}^{2}\Big)\,\mathrm{d}s= E(\boldsymbol{\varphi}_{0})\label{energyeq}
			\end{align}
			holds for all $t\in[0,T]$.
		\end{description}
	\end{definition}

	As a preliminary result, we have the following proposition on the well-posedness of problem \eqref{model} (see \cite{LvWu-4}).
	\begin{proposition}
	\label{well-posedness}
		Let $\Omega\subset\mathbb{R}^{d}$ $(d\in\{2,3\})$ be a bounded domain with smooth boundary $\Gamma=\partial\Omega$
        and $T\in(0,+\infty)$ be an arbitrary but given final time.	
        Suppose that the assumptions $\mathbf{(A1)}$--$\mathbf{(A4)}$ hold.
        Then, problem \eqref{model} admits a weak solution in the sense of Definition \ref{weakdefn}.
        The following continuous dependence estimate implies that the weak solution is unique: 	
        let $(\boldsymbol{\varphi}_{i},\boldsymbol{\mu}_{i})$, $i\in\{1,2\}$,
		be two weak solutions to problem \eqref{model}
		corresponding to the initial data $\boldsymbol{\varphi}_{0,i}$ with
		\begin{align*}
				\overline{m}(\boldsymbol{\varphi}_{0,1})=\overline{m}(\boldsymbol{\varphi}_{0,2})= m_0.
		\end{align*}
		Then, for all $t\in[0,T]$, it holds
		\begin{align}
			&\Vert\boldsymbol{\varphi}_{1}(t)-\boldsymbol{\varphi}_{2}(t)\Vert_{L,0,\ast}^{2}+\int_{0}^{t}\Vert\boldsymbol{\varphi}_{1}(s)-\boldsymbol{\varphi}_{2}(s)\Vert_{\mathcal{L}^{2}}^{2}\,\mathrm{d}s\leq C\Vert\boldsymbol{\varphi}_{0,1}-\boldsymbol{\varphi}_{0,2}\Vert_{L,0,\ast}^{2}, \label{contiesti}
		\end{align}
		 where the positive constant $C$ depends on the coefficients in assumptions, $\Omega$, $\Gamma$ and $T$.
	\end{proposition}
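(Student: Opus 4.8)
The existence of a weak solution is precisely the content of \cite{LvWu-4}, so the plan is to concentrate on the continuous dependence estimate \eqref{contiesti}, from which uniqueness follows at once. The strategy is the dual (``$H^{-1}$'') energy method adapted to the bulk--surface structure. Write $\boldsymbol{\varphi}:=\boldsymbol{\varphi}_1-\boldsymbol{\varphi}_2=(\varphi,\psi)$, $\mu:=\mu_1-\mu_2$ and $\theta:=\theta_1-\theta_2$. Since both solutions conserve the generalized mean and $\overline{m}(\boldsymbol{\varphi}_{0,1})=\overline{m}(\boldsymbol{\varphi}_{0,2})=m$, we have $\overline{m}(\boldsymbol{\varphi}(t))=0$, hence $\boldsymbol{\varphi}(t)\in\mathcal{L}^2_{(0)}\subset\mathcal{H}^{-1}_{L,0}$ and the solution operator $\mathfrak{S}^L$ of \eqref{Elliptic} may be applied to $\boldsymbol{\varphi}$. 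Subtracting the two variational identities in Definition \ref{weakdefn}(ii) and choosing the test function $\boldsymbol{z}=\mathfrak{S}^L\boldsymbol{\varphi}\in\mathcal{H}^1_{L,0}$, the right-hand side becomes $-a_L(\boldsymbol{\mu},\mathfrak{S}^L\boldsymbol{\varphi})$ with $\boldsymbol{\mu}:=(\mu,\theta)$.

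First I would reduce the right-hand side to a clean $\mathcal{L}^2$ pairing. By symmetry of $a_L$ and the defining weak formulation $a_L(\mathfrak{S}^L\boldsymbol{\varphi},\boldsymbol{\zeta})=\langle\boldsymbol{\varphi},\boldsymbol{\zeta}\rangle_{(\mathcal{H}^1_L)',\mathcal{H}^1_L}$, one gets $a_L(\boldsymbol{\mu},\mathfrak{S}^L\boldsymbol{\varphi})=(\varphi,\mu)_H+(\psi,\theta)_{H_\Gamma}$, provided $\boldsymbol{\mu}\in\mathcal{H}^1_L$. This membership holds because $\mu\in V$, $\theta\in V_\Gamma$ when $L\in(0,+\infty)$, while for $L=0$ the boundary condition \eqref{model}$_5$ forces $\mu|_\Gamma=\theta$, so that $\boldsymbol{\mu}\in\mathcal{V}^1$; in particular the coupling term $\chi(L)\int_\Gamma(\theta-\mu)(\cdots)\,\mathrm{d}S$ is automatically absorbed into the pairing and needs no separate treatment. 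On the left-hand side, the chain rule in the Gelfand triple $\mathcal{H}^1_{L,0}\subset\mathcal{L}^2_{(0)}\subset\mathcal{H}^{-1}_{L,0}$ (justified by the regularity $\boldsymbol{\varphi}\in H^1(0,T;(\mathcal{H}^1_L)')\cap L^2(0,T;\mathcal{H}^1)$) gives $\langle\partial_t\boldsymbol{\varphi},\mathfrak{S}^L\boldsymbol{\varphi}\rangle=\tfrac12\tfrac{\mathrm d}{\mathrm dt}\|\boldsymbol{\varphi}\|_{L,0,*}^2$. The identity therefore reduces to $\tfrac12\tfrac{\mathrm d}{\mathrm dt}\|\boldsymbol{\varphi}\|_{L,0,*}^2+(\varphi,\mu)_H+(\psi,\theta)_{H_\Gamma}=0$.

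Next I would establish coercivity by inserting the constitutive laws for $\mu,\theta$. The monotone parts give, via the mean value theorem and $\beta'\geq\alpha$, $\beta_\Gamma'\geq\alpha$, the contributions $(\varphi,\beta(\varphi_1)-\beta(\varphi_2))_H\geq\alpha\|\varphi\|_H^2$ and its surface analogue; the multiplication operators contribute $(\varphi,a_\Omega\varphi)_H\geq a_*\|\varphi\|_H^2$ and $(\psi,a_\Gamma\psi)_{H_\Gamma}\geq a_\circledast\|\psi\|_{H_\Gamma}^2$; and the Lipschitz perturbations satisfy $|(\varphi,\pi(\varphi_1)-\pi(\varphi_2))_H|\leq\gamma_1\|\varphi\|_H^2$, $|(\psi,\pi_\Gamma(\psi_1)-\pi_\Gamma(\psi_2))_{H_\Gamma}|\leq\gamma_2\|\psi\|_{H_\Gamma}^2$. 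The decisive point, and the main obstacle, is the treatment of the nonlocal convolution cross terms $(\varphi,J\ast\varphi)_H$ and $(\psi,K\circledast\psi)_{H_\Gamma}$: rather than discarding them through the nonnegativity $(\varphi,a_\Omega\varphi-J\ast\varphi)_H=\tfrac12\iint J(x-y)|\varphi(x)-\varphi(y)|^2\,\mathrm dy\,\mathrm dx\geq0$ (which would only yield the too-strong requirement $\gamma_1<\alpha$), I would retain the full $a_*\|\varphi\|_H^2$ and absorb the convolutions as genuinely lower-order terms. Using the smoothing $\|J\ast\varphi\|_V\leq\|J\|_{W^{1,1}}\|\varphi\|_H$ and $\|K\circledast\psi\|_{V_\Gamma}\leq C\|\psi\|_{H_\Gamma}$ granted by $\mathbf{(A1)}$, together with duality and the equivalence of $\|\cdot\|_{L,0,*}$ with $\|\cdot\|_{(\mathcal{H}^1_L)'}$, one obtains $|(\varphi,J\ast\varphi)_H|+|(\psi,K\circledast\psi)_{H_\Gamma}|\leq\varepsilon\|\boldsymbol{\varphi}\|_{\mathcal{L}^2}^2+C_\varepsilon\|\boldsymbol{\varphi}\|_{L,0,*}^2$. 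For $L\in(0,+\infty)$ this duality is immediate on the product space $\mathcal{H}^1$; for $L=0$ the constraint defining $\mathcal{V}^1$ requires a trace argument to bound the separate dual norms $\|\varphi\|_{V'}$, $\|\psi\|_{V_\Gamma'}$ by $\|\boldsymbol{\varphi}\|_{0,0,*}$, which is the one delicate case.

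Finally, collecting the terms and fixing $\varepsilon$ small, the smallness hypotheses $\mathbf{(A3)}$, which imply $\gamma_1<a_*+\tfrac{\alpha}{1+\alpha}<a_*+\alpha$ and $\gamma_2<a_\circledast+\tfrac{\alpha}{1+\alpha}<a_\circledast+\alpha$, guarantee that the net coefficient of $\|\boldsymbol{\varphi}\|_{\mathcal{L}^2}^2$ is a strictly positive constant $c_0$. This yields the differential inequality $\tfrac{\mathrm d}{\mathrm dt}\|\boldsymbol{\varphi}\|_{L,0,*}^2+c_0\|\boldsymbol{\varphi}\|_{\mathcal{L}^2}^2\leq C\|\boldsymbol{\varphi}\|_{L,0,*}^2$, and Gronwall's lemma on $[0,t]$ delivers \eqref{contiesti}; the choice $\boldsymbol{\varphi}_{0,1}=\boldsymbol{\varphi}_{0,2}$ then forces $\boldsymbol{\varphi}\equiv0$, i.e.\ uniqueness. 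I expect all constants to be tracked explicitly in terms of $a_*$, $a_\circledast$, $\alpha$, $\gamma_1$, $\gamma_2$, the kernel norms, $\Omega$, $\Gamma$ and $T$.
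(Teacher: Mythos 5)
Your overall strategy is the right one and is essentially what the paper (deferring to \cite{LvWu-4}) means by ``the standard energy method'': test the difference equation with $\mathfrak{S}^L(\boldsymbol{\varphi}_1-\boldsymbol{\varphi}_2)$, use the defining property of $\mathfrak{S}^L$ to reduce the right-hand side to $(\varphi,\mu)_H+(\psi,\theta)_{H_\Gamma}$, extract coercivity $a_\ast+\alpha-\gamma_1>0$, $a_\circledast+\alpha-\gamma_2>0$ from $a_\Omega$, the monotonicity $\beta'\geq\alpha$ and the Lipschitz bound on $\pi$, and absorb the convolution terms as lower order via duality before applying Gronwall. This is exactly the computation the paper itself carries out in \eqref{gronwall'} for Lemma \ref{strong-continuous}, and your observation that one must keep $a_\ast\|\varphi\|_H^2$ rather than discard $(\varphi,a_\Omega\varphi-J\ast\varphi)_H\geq0$ is the correct reading of assumption $(\mathbf{A3})$. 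For $L\in(0,+\infty)$ your argument is complete.

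The one genuine defect is your treatment of $L=0$. The inequality you invoke there, namely that the separate dual norms $\|\varphi\|_{V'}$ and $\|\psi\|_{V_\Gamma'}$ are controlled by $\|\boldsymbol{\varphi}\|_{0,0,\ast}\sim\|\boldsymbol{\varphi}\|_{(\mathcal{V}^1)'}$ on $\mathcal{L}^2_{(0)}$, is false: the componentwise projection is not bounded from $(\mathcal{V}^1)'$ into $V'\times V_\Gamma'$. Concretely, taking $\varphi_n=n\chi_{\{d(x,\Gamma)<1/n\}}$ and $\psi_n$ the constant making $\overline{m}(\varphi_n,\psi_n)=0$, one checks that $\|(\varphi_n,\psi_n)\|_{(\mathcal{V}^1)'}=O(n^{-1/2})\to0$ (the bulk pairing collapses onto the trace and cancels against the surface pairing) while $\|\varphi_n\|_{V'}$ stays bounded away from zero; so no trace argument can deliver that bound. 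The estimate you actually need, $|(\varphi,J\ast\varphi)_H+(\psi,K\circledast\psi)_{H_\Gamma}|\leq\varepsilon\|\boldsymbol{\varphi}\|_{\mathcal{L}^2}^2+C_\varepsilon\|\boldsymbol{\varphi}\|_{0,0,\ast}^2$, is nevertheless true, but it must be obtained differently: either by an Ehrling-type compactness argument giving $\|\mathbb{J}\boldsymbol{\varphi}\|_{\mathcal{L}^2}\leq\varepsilon\|\boldsymbol{\varphi}\|_{\mathcal{L}^2}+C_\varepsilon\|\boldsymbol{\varphi}\|_{(\mathcal{V}^1)'}$ (using that $\mathbb{J}:\mathcal{L}^2\to\mathcal{L}^2$ is compact, cf.\ Remark \ref{compact-operator}, and that $\mathcal{L}^2\hookrightarrow(\mathcal{V}^1)'$ is injective), or by the route the paper actually follows for $L=0$ throughout, namely deriving the $L\in(0,1)$ estimates with constants uniform in $L$ and passing to the limit $L\to0$. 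With either repair the rest of your argument goes through unchanged.
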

	
	\noindent\textbf{A sketch of the proof for Proposition \ref{well-posedness}.}
		We first focus on the existence of global weak solutions. When $L\in(0,+\infty)$,
        we consider the following approximating problem with the singular potentials $\beta$, $\beta_\Gamma$
        replaced by their Yosida approximations $\beta_\varepsilon$, $\beta_{\Gamma,\varepsilon}$:
		\begin{align}
			\begin{cases}
			\partial_{t}\varphi_\varepsilon =\Delta\mu_\varepsilon, &\text{in }Q_T,\\
			\mu_\varepsilon =a_{\Omega}\varphi_\varepsilon-J\ast\varphi_\varepsilon+\beta_{\varepsilon}(\varphi_\varepsilon)+\pi(\varphi_\varepsilon),&\text{in }Q_T,\\
			\partial_{t}\psi_\varepsilon =\Delta_{\Gamma}\theta_\varepsilon-\partial_{\mathbf{n}}\mu_\varepsilon,&\text{on }\Sigma_T,\\
			\theta_\varepsilon=a_{\Gamma} \psi_\varepsilon-K\circledast\psi_\varepsilon+\beta_{\Gamma,\varepsilon}(\psi_\varepsilon) +\pi_{\Gamma}(\psi_\varepsilon), &\text{on }\Sigma_T,\\
			L\partial_{\mathbf{n}}\mu_\varepsilon =\theta_\varepsilon-\mu_\varepsilon, &\text{on }\Sigma_T,\\
			(\varphi_\varepsilon, \psi_\varepsilon)|_{t=0} =(\varphi_{0},\psi_0),&\text{in }\Omega\times\Gamma,
			\end{cases}\label{appro-model}
		\end{align}
        where $\varepsilon\in(0,\varepsilon^\ast)$, and
        \[0< \varepsilon^\ast\leq \min\Big\{\frac{1}{2\|J\|_{L^1(\Omega)}+2\gamma_1+1},\frac{1}{2\|K\|_{L^1(\Gamma)}+2\gamma_2+1}\Big\}<1.\]
	Then problem \eqref{appro-model} can be solved by a suitable Faedo--Galerkin scheme.
    After deriving \emph{a priori} estimates that are uniform with respect to $\varepsilon\in(0,\varepsilon^\ast)$,
    we can conclude the existence of a global weak solution to problem \eqref{model} with $L\in(0,+\infty)$
    by passing to the limit $\varepsilon \to 0$ with the aid of compactness arguments.
    %
    When $L=0$, the existence of weak solutions can be obtained by studying the asymptotic limit as $L\to0$.
    Finally, the continuous dependence estimate \eqref{contiesti} can be derived by the standard energy method. For further details, see \cite{LvWu-4}.
    \hfill$\square$

	\subsection{Statement of results}
   	In this section, we state our main results about the long-time behavior of global weak solutions to problem \eqref{model}.
    First, we introduce the dynamical system associated with problem \eqref{model}. For any $m\in[0,1)$, define the phase space
	\begin{align}
		\mathfrak{X}_m=
			\big\{ \boldsymbol{\varphi}:=(\varphi,\psi) \in \mathcal{L}^2:\, \widehat{\beta}(\varphi)\in L^1(\Omega),\ \widehat{\beta}_\Gamma(\psi)\in L^1(\Gamma)\ \text{ and }\ |\overline{m}(\boldsymbol{\varphi})|\leq m\big\}\notag
	\end{align}
	endowed with the metric
	\begin{align}
		\mathbf{d}(\boldsymbol{\varphi}_1,\boldsymbol{\varphi}_2) =\|\boldsymbol{\varphi}_1-\boldsymbol{\varphi}_2\|_{\mathcal{L}^2} 
+ \int_\Omega\big|\widehat{\beta}(\varphi_1) -\widehat{\beta}(\varphi_2)\big|\,\mathrm{d}x +\int_\Gamma\big|\widehat{\beta}_\Gamma(\psi_1) -\widehat{\beta}_\Gamma(\psi_2)\big|\,\mathrm{d}S.
\notag
	\end{align}
    Applying the same argument as that for \cite[Lemma 3.8]{RS04}, we can check that $(\mathfrak{X}_m,\mathbf{d})$ is a complete metric space. In addition, thanks to Proposition \ref{well-posedness}, we can define the semigroup
	$$
    \mathcal{S}^L(t): \mathfrak{X}_m\to\mathfrak{X}_m,\quad \mathcal{S}^L(t)\boldsymbol{\varphi}_0=\boldsymbol{\varphi}^L(t),\quad\forall\,t\geq0,
    $$
	where $\boldsymbol{\varphi}^L$ is the unique global weak solution to problem \eqref{model} with $L\in[0,+\infty)$,
    corresponding to the initial datum  $\boldsymbol{\varphi}_0\in\mathfrak{X}_m$.

Our first result concerns the existence of the global attractor when $L\in[0,+\infty)$.
In order to deal with the case $L=0$, we need the following additional assumption:
\begin{itemize}
    \item [$(\mathbf{A5})$] $J\in W^{1,1}(\mathbb{R}^d)\cap L^2(\mathbb{R}^d)$.
\end{itemize}
	\begin{theorem}
		\label{global}
		Let $\Omega\subset \mathbb{R}^d$ $(d\in\{2,3\})$ be a bounded domain with a smooth boundary $\Gamma=\partial\Omega$,
        the assumptions $(\mathbf{A1})$--$\mathbf{(A4)}$ be satisfied and $L\in[0,+\infty)$.
        Assume in addition that the assumption $(\mathbf{A5})$ holds when $L=0$.
        Then, for every fixed $m\in[0,1)$, the dynamical system $(\mathfrak{X}_m,\mathcal{S}^L(t))$ has a connected global attractor $\mathcal{A}_m^L$ that is bounded in $\mathfrak{X}_m\cap \mathcal{H}^1$.
	\end{theorem}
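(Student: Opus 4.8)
The plan is to apply the abstract existence criterion for global attractors of \emph{closed} (rather than strongly continuous) semigroups due to Pata and Zelik, namely \cite[Corollary 6]{PZ}. The classical theory is unavailable because the singular potential prevents $\mathcal{S}^L(t)$ from being continuous in the metric $\mathbf{d}$ of $\mathfrak{X}_m$; only the weaker continuous dependence estimate \eqref{contiesti}, formulated in the dual-type norm $\|\cdot\|_{L,0,\ast}$, is at hand. Accordingly, it suffices to verify the three conditions announced in the introduction: (i) $\mathcal{S}^L(t):\mathfrak{X}_m\to\mathfrak{X}_m$ is a closed map for every $t\ge 0$; (ii) there is a connected compact attracting set $\mathcal{K}\subset\mathfrak{X}_m$; and (iii) $\mathcal{S}^L(t)\mathcal{K}\subset\mathcal{K}$ for all sufficiently large $t$.

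For (i), fix $t\ge 0$ and take sequences with $\boldsymbol{\varphi}_{0,n}\to\boldsymbol{\varphi}_0$ and $\mathcal{S}^L(t)\boldsymbol{\varphi}_{0,n}\to\boldsymbol{\eta}$ in $(\mathfrak{X}_m,\mathbf{d})$; I must show $\boldsymbol{\eta}=\mathcal{S}^L(t)\boldsymbol{\varphi}_0$. Since $\mathbf{d}$-convergence entails $\mathcal{L}^2$-convergence, the energy is $\mathbf{d}$-continuous on $\mathfrak{X}_m$ and hence $E(\boldsymbol{\varphi}_{0,n})$ is uniformly bounded; by the energy identity \eqref{energyeq} the trajectories $\mathcal{S}^L(\cdot)\boldsymbol{\varphi}_{0,n}$ are bounded in $L^\infty(0,t;\mathcal{L}^2)\cap L^2(0,t;\mathcal{H}^1)$ with $\partial_t$ bounded in $L^2(0,t;(\mathcal{H}^1_L)')$. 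Extracting a weakly convergent subsequence and passing to the limit in the variational formulation of Definition \ref{weakdefn}, with the monotone terms $\beta(\varphi)$, $\beta_\Gamma(\psi)$ treated by maximal-monotonicity as in the existence proof, produces a weak solution issuing from $\boldsymbol{\varphi}_0$; uniqueness in Proposition \ref{well-posedness} then identifies its value at time $t$ with $\mathcal{S}^L(t)\boldsymbol{\varphi}_0$, while the $\mathcal{L}^2$-limit of $\mathcal{S}^L(t)\boldsymbol{\varphi}_{0,n}$ is $\boldsymbol{\eta}$, giving $\boldsymbol{\eta}=\mathcal{S}^L(t)\boldsymbol{\varphi}_0$. (Once the generalized means are aligned, which is legitimate since $\overline{m}(\boldsymbol{\varphi}_{0,n})\to\overline{m}(\boldsymbol{\varphi}_0)$, one may alternatively invoke \eqref{contiesti} directly in $\|\cdot\|_{L,0,\ast}$.)

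For (ii), I first establish dissipativity. The identity \eqref{energyeq} shows $t\mapsto E(\boldsymbol{\varphi}(t))$ is nonincreasing, while testing the chemical-potential relations against $\mathbf{P}\boldsymbol{\varphi}$ and using $\beta'\ge\alpha$, $\beta_\Gamma'\ge\alpha$ together with $a_\Omega\ge a_\ast$, $a_\Gamma\ge a_\circledast$ yields both a uniform $L^1$-bound on $\beta(\varphi)$, $\beta_\Gamma(\psi)$ and a dissipative differential inequality; as $|\overline{m}(\boldsymbol{\varphi})|\le m<1$ is conserved and $E$ is bounded below, this produces a bounded absorbing set $\mathcal{B}_0$ in $\mathfrak{X}_m$. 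Integrating \eqref{energyeq} over $[t,t+1]$ then bounds $\int_t^{t+1}(\|\nabla\mu\|_H^2+\|\nabla_\Gamma\theta\|_{H_\Gamma}^2)\,\mathrm{d}s$ uniformly once the orbit enters $\mathcal{B}_0$; writing $\nabla\mu=(a_\Omega+\beta'(\varphi)+\pi'(\varphi))\nabla\varphi+\varphi\,(\nabla J\ast 1)-\nabla J\ast\varphi$ and exploiting the coercivity $a_\Omega+\beta'+\pi'\ge a_\ast+\alpha-\gamma_1>0$ from $\mathbf{(A2)}$--$\mathbf{(A3)}$ converts this into a uniform space–time $\mathcal{H}^1$-bound for $\boldsymbol{\varphi}$, which the regularity-propagation estimates of \cite{LvWu-4} upgrade to a uniform $L^\infty$-in-time bound, i.e.\ a bounded absorbing set $\mathcal{B}_1$ in $\mathfrak{X}_m\cap\mathcal{H}^1$ (for $L=0$ this is precisely where $\mathbf{(A5)}$ enters). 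Setting $\mathcal{K}:=\overline{\mathcal{B}_1}^{\,\mathfrak{X}_m}$, it is $\mathbf{d}$-compact because $\mathcal{H}^1\hookrightarrow\hookrightarrow\mathcal{L}^2$ controls the $\mathcal{L}^2$-part while the uniform $L^1$-bound on $\beta(\varphi)$, $\beta_\Gamma(\psi)$ forces equi-integrability, hence convergence of the $\widehat\beta$-integrals in $\mathbf{d}$; it is attracting by construction, and its connectedness is inherited from the convexity, thus connectedness, of $\mathfrak{X}_m$ by the standard argument. Finally (iii) is immediate: $\mathcal{B}_1$ being absorbing and bounded, $\mathcal{S}^L(t)\mathcal{K}\subset\mathcal{B}_1\subset\mathcal{K}$ for all large $t$. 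Pata--Zelik's criterion then supplies the connected global attractor $\mathcal{A}_m^L$, which lies in $\mathcal{K}$ and is therefore bounded in $\mathfrak{X}_m\cap\mathcal{H}^1$.

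The main obstacle is condition (ii), specifically upgrading $\mathcal{L}^2$-precompactness to precompactness in the \emph{stronger} metric $\mathbf{d}$, which additionally records the singular-potential integrals. This demands uniform-in-time $L^1$ control of $\beta(\varphi)$ and $\beta_\Gamma(\psi)$ (equivalently, equi-integrability of $\widehat\beta(\varphi)$, $\widehat\beta_\Gamma(\psi)$), so that $\mathcal{L}^2$-convergence along the attracting set transfers to convergence of the $\widehat\beta$-terms. For $L\in(0,+\infty)$ the strict separation property makes this transparent, whereas for $L=0$ the weaker bulk–surface coupling obliges a direct argument relying on the extra hypothesis $\mathbf{(A5)}$. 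The secondary difficulty is the failure of strong continuity of $\mathcal{S}^L(t)$, which is exactly what the closed-map reformulation in (i) is designed to circumvent.
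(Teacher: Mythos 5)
Your proposal is correct and follows essentially the same route as the paper: verify the Pata--Zelik closed-semigroup criterion, using the continuous dependence estimate \eqref{contiesti} for closedness and a dissipative estimate plus an $\mathcal{H}^1$-bounded absorbing set (with $(\mathbf{A5})$ entering only to make the constants uniform as $L\to0$) to produce the connected compact attracting set. One small caveat: the strict separation property you invoke for $L\in(0,+\infty)$ requires $(\mathbf{A6})$--$(\mathbf{A9})$ and is not available under the hypotheses of this theorem, but your argument does not actually need it, since $\widehat{\beta},\widehat{\beta}_\Gamma\in C([-1,1])$ are bounded and the phase variables take values in $[-1,1]$, so $\mathcal{L}^2$-convergence already yields convergence of the $\widehat{\beta}$-integrals appearing in the metric $\mathbf{d}$.
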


    \begin{remark}\rm
        To prove the existence of a global attractor,
        we need to establish some dissipative estimates and show the existence of an absorbing set in $\mathcal{H}^1$.
        Such dissipative estimates for the case $L\in(0,+\infty)$ can be obtained by deriving a similar estimate
        for the approximating system \eqref{appro-model} and passing to the limit as $\varepsilon\to0$.
        The additional assumption $(\mathbf{A5})$ enables us to derive dissipative estimates that are uniform with respect to $L\in(0,1)$,
        thus we can pass to the limit as $L\to0$ to obtain the dissipative estimates for the case $L=0$.
        Please see the proof of Lemma \ref{regular-absorbing} or \cite[Lemma 5.5]{LvWu-4} for more details.
    \end{remark}
	
	In order to prove the existence of exponential attractors and convergence to a single equilibrium,
    the strict separation property of global weak solutions plays a significant role.
    {As in \cite{LvWu-4}}, we make the following additional assumptions:
	
	\begin{description}
		\item[$\mathbf{(A6)}$] The bulk and boundary potentials coincide, i.e., $\beta=\beta_\Gamma$.
		
		\item[$\mathbf{(A7)}$] As $\delta\rightarrow0$, for some constant $\kappa_\ast$, it holds
		\begin{align*}
			&	\frac{1}{\beta(1-2\delta)}=O\Big(\frac{1}{|\mathrm{ln}(\delta)|^{\kappa_\ast}}\Big),\quad\frac{1}{|\beta(-1+2\delta)|}=O\Big(\frac{1}{|\mathrm{ln}(\delta)|^{\kappa_\ast}}\Big),
		\end{align*}
		with $\kappa_\ast>0$ if $d=3$ and $\kappa_\ast>1/2$ if $d=2$.
		
		\item[$\mathbf{(A8)}$] There exist $\delta_{0}\in(0,1/2)$ and $\widetilde{C}_0\geq1$ such that for any $\delta\in(0,\delta_{0})$, it holds
		\begin{align*}
			&\frac{1}{\beta'(1-2\delta)}\leq \widetilde{C}_0\delta,\quad\frac{1}{\beta'(-1+2\delta)}\leq \widetilde{C}_0\delta.
		\end{align*}
		
		\item[$\mathbf{(A9)}$] There exists $\delta_{1}\in (0,1)$ such that $\beta'$ is monotone non-decreasing on $[1-\delta_{1},1)$
        and non-increasing on $(-1,-1+\delta_{1}]$. 
	\end{description}

    \begin{remark}
        \rm
        In assumption $(\mathbf{A7})$, we only need to assume $\kappa_\ast>0$ in the three-dimensional case.
        This is because the other two stronger assumptions $(\mathbf{A8})$ and $(\mathbf{A9})$ need to be imposed
        in the proof of the instantaneous strict separation property in three dimensions (see \cite[Remark 2.4]{LvWu-4} for further details).
    \end{remark}

\begin{theorem}
    \label{exponential}
   Let $\Omega\subset \mathbb{R}^d$ $(d\in\{2,3\})$ be a bounded domain with a smooth boundary $\Gamma=\partial\Omega$, the assumptions $(\mathbf{A1})$--$\mathbf{(A4)}$ be satisfied and $L\in(0,+\infty)$. In addition, we assume
   \begin{itemize}
       \item [(1)] If $d=2$, $(\mathbf{A6})$ and $(\mathbf{A7})$ hold.

       \item [(2)] If $d=3$, $(\mathbf{A6})$--$(\mathbf{A9})$ hold.
   \end{itemize}
   Then, for every fixed $m\in[0,1)$, there exists an exponential attractor $\mathcal{E}^L_m$ bounded in $\mathcal{H}^1$
   for the dynamical system $(\mathfrak{X}_m,\mathcal{S}^L(t))$ that satisfies the following properties:
    \begin{itemize}
        \item [(i)] Semi-invariance: $\mathcal{S}^L(t)\mathcal{E}^L_m\subset\mathcal{E}^L_m$ \ \ for every $t\geq0$.

        \item [(ii)] Exponential attraction property: for any $\nu\in(0,1)$ and $q\in(2,+\infty)$,
        there exists a constant $\kappa_{\nu,q}>0$
        and a positive monotone increasing function $\mathbb{M}_{\nu,q}$ such that,
        for every bounded set $B\subset \mathfrak{X}_m$ with $R=\sup_{\boldsymbol{b}\in B}\|\boldsymbol{b}\|_{\mathcal{L}^2}$,
        it holds
        \[\mathrm{dist}_{\mathcal{H}^{1-\nu}\cap\mathcal{L}^q}(\mathcal{S}^L(t)B,\mathcal{E}^L_m)\leq \mathbb{M}_{\nu,q}(R) e^{-\kappa_{\nu,q} t},\quad\forall\,t\geq0,\]
       where $\mathrm{dist}_{\mathcal{H}^{1-\nu}\cap \mathcal{L}^q}$ denotes the non-symmetric Hausdorff semidistance
       between sets with respect to the norm of $\mathcal{H}^{1-\nu}\cap \mathcal{L}^q$
       defined as $$\mathrm{dist}_{\mathcal{H}^{1-\nu}\cap \mathcal{L}^q}(A,B)=\sup_{\boldsymbol{a}\in A}\inf_{\boldsymbol{b}\in B}\|\boldsymbol{a}-\boldsymbol{b}\|_{\mathcal{H}^{1-\nu}}+\sup_{\boldsymbol{a}\in A}\inf_{\boldsymbol{b}\in B}\|\boldsymbol{a}-\boldsymbol{b}\|_{\mathcal{L}^{q}}.$$

        \item[(iii)] Finite fractal dimension: for any $\nu\in(0,1)$ and $q\in(2,+\infty)$,
        there exist two positive constants $C_{m,\nu}$ and $C_{m,q}$ such that
        \[\mathrm{dim}_{\mathrm{F},\mathcal{H}^{1-\nu}}(\mathcal{E}^L_m)\leq C_{m,\nu}<+\infty,\quad\mathrm{dim}_{\mathrm{F},\mathcal{L}^{q}}(\mathcal{E}^L_m)\leq C_{m,q}<+\infty.\]
    \end{itemize}
\end{theorem}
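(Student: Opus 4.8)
The plan is to follow the short-trajectory strategy of \cite{EZ04} as adapted to the nonlocal Cahn--Hilliard setting in \cite{GG}, working throughout in the strictly separated regime. The starting point is the strict separation property established in \cite{LvWu-4}: under $(\mathbf{A6})$--$(\mathbf{A7})$ (resp. $(\mathbf{A6})$--$(\mathbf{A9})$) there is, for every $\tau>0$, a $\delta=\delta(\tau)\in(0,1)$ with $\|\boldsymbol{\varphi}(t)\|_{\mathcal{L}^\infty}\le 1-\delta$ for all $t\ge\tau$. On this separated range the singular nonlinearities $\beta,\beta_\Gamma$ and their derivatives are bounded, so that along trajectories the problem effectively behaves like one with a regular potential. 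Combining this with the dissipative estimates underlying Theorem \ref{global}, I first fix a closed, semi-invariant absorbing set $\mathcal{K}\subset\mathfrak{X}_m$ that is bounded in $\mathcal{H}^1$ and is entered after a finite time; all subsequent analysis is carried out on $\mathcal{K}$, so that the separation bound and its consequences are uniform.

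Next I set up the discrete dynamics. Fix $T>0$ and consider the discrete semigroup $\{\mathbb{S}^n:=\mathcal{S}^L(nT)\}_{n\in\mathbb{N}}$ on $\mathcal{K}$. Following the short-trajectory device, I associate with each datum its short trajectory $\boldsymbol{\varphi}|_{[0,T]}$, recorded in $\mathbb{V}_1=L^2(0,T;\mathcal{L}^2)\cap H^1(0,T;(\mathcal{W}^2_{L,\mathbf n})')$; by Remark \ref{compact-embedding} this space embeds compactly into $\mathbb{V}=L^2(0,T;(\mathcal{H}^1)')$, which supplies the compactness hypothesis of the abstract Lemma \ref{abstract}. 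The analytic core is the pair of continuous-dependence estimates (Lemmas \ref{Holder-Lip}--\ref{compactness}): a Lipschitz/H\"older bound for the solution map and, crucially, a \emph{smoothing} estimate controlling the trajectory difference in the strong norm $\mathbb{V}_1$ by the initial-data difference in the weak norm. The $L^2(0,T;\mathcal{L}^2)$-part is obtained by testing the difference equation and exploiting $\beta'\ge\alpha>0$ together with the $\mathcal{L}^\infty$-bound to absorb the nonlinear and nonlocal terms, upgrading the $H^{-1}$-type estimate \eqref{contiesti} of Proposition \ref{well-posedness}; the $H^1(0,T;(\mathcal{W}^2_{L,\mathbf n})')$-part comes from reading off $\partial_t(\boldsymbol{\varphi}_1-\boldsymbol{\varphi}_2)$ from the weak formulation and estimating it against test functions in $\mathcal{W}^2_{L,\mathbf n}$, where the boundary constraint $L\partial_{\mathbf n}z=z_\Gamma-z$ is exactly what legitimizes the duality pairing with $\Delta\mu$ and $\Delta_\Gamma\theta$ after integration by parts.

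With these estimates in hand, Lemma \ref{abstract} produces a discrete exponential attractor $\mathcal{E}_d$ of finite fractal dimension for $\{\mathbb{S}^n\}_{n\in\mathbb{N}}$. I then pass to continuous time exactly as in \cite[Proof of Theorem 2.8]{GG}: setting $\mathcal{E}^L_m=\bigcup_{t\in[0,T]}\mathcal{S}^L(t)\mathcal{E}_d$, the H\"older-in-time continuity of $t\mapsto\mathcal{S}^L(t)$ on $\mathcal{K}$ guarantees that $\mathcal{E}^L_m$ is compact, semi-invariant, and of finite fractal dimension, which yields (i) and (iii). For the attraction statement (ii), the $\mathcal{L}^2$-exponential attraction delivered by the construction is promoted to $\mathcal{H}^{1-\nu}\cap\mathcal{L}^q$ by interpolation: the $\mathcal{L}^q$-bound follows from $\|\cdot\|_{\mathcal{L}^q}\le\|\cdot\|_{\mathcal{L}^2}^{2/q}\|\cdot\|_{\mathcal{L}^\infty}^{1-2/q}$ using the uniform separation bound, while the $\mathcal{H}^{1-\nu}$-bound follows from interpolating between the $\mathcal{L}^2$-attraction and the uniform $\mathcal{H}^1$-bound on $\mathcal{K}$; the transient before entering $\mathcal{K}$ only affects the prefactor $\mathbb{M}_{\nu,q}(R)$. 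Finally, the inclusion $\mathcal{A}^L_m\subset\mathcal{E}^L_m$ gives Corollary \ref{finite-global}.

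I expect the genuine difficulty to be the smoothing estimate in $\mathbb{V}_1$, and specifically the control of $\partial_t(\boldsymbol{\varphi}_1-\boldsymbol{\varphi}_2)$ in $(\mathcal{W}^2_{L,\mathbf n})'$ uniformly on $\mathcal{K}$, since this is where the singular potential, the nonlocal convolutions $J\ast\cdot$ and $K\circledast\cdot$, and the kinetic boundary coupling through $L$ all interact. The strict separation property is the decisive tool that renders the potential terms Lipschitz, while assumption $(\mathbf{A1})$ keeps the nonlocal terms bounded; the remaining care is purely in organizing the test-function duality so that the boundary flux $\partial_{\mathbf n}\mu$ and the kinetic term $\chi(L)(\theta-\mu)$ are handled consistently with the constraint defining $\mathcal{W}^2_{L,\mathbf n}$.
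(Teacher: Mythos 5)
Your proposal is correct and follows essentially the same route as the paper: strict separation on a semi-invariant $\mathcal{H}^1$-bounded absorbing set, the short-trajectory spaces $\mathbb{V}_1=L^2(0,T;\mathcal{L}^2)\cap H^1(0,T;(\mathcal{W}^2_{L,\mathbf n})')\hookrightarrow\hookrightarrow\mathbb{V}$, the contraction-plus-smoothing pair of estimates feeding Lemma \ref{abstract}, passage to continuous time via $\bigcup_{t\in[0,T]}\mathcal{S}^L(t)\mathcal{E}_d$, and interpolation to upgrade the metric. The only cosmetic difference is that the abstract construction delivers attraction in the $(\mathcal{H}^1)'$-metric rather than in $\mathcal{L}^2$, so the paper interpolates $\mathcal{H}^{1-\nu}$ and $\mathcal{L}^q$ against $(\mathcal{H}^1)'$ (and the uniform $\mathcal{H}^1$, $\mathcal{L}^\infty$ bounds), which is immaterial to the argument.
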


In view of the above theorem, we can immediately deduce that
\begin{corollary}
\label{finite-global}
    Under the assumptions of Theorem \ref{exponential}, the global attractor $\mathcal{A}^L_m$ is bounded in $\mathcal{H}^1$ and has finite fractal dimension, that is,
    \[\mathrm{dim}_{\mathrm{F},\mathcal{H}^{1-\nu}}(\mathcal{A}^L_m)\leq C_{m,\nu}<+\infty,\quad\mathrm{dim}_{\mathrm{F},\mathcal{L}^{q}}(\mathcal{A}^L_m)\leq C_{m,q}<+\infty.\]
\end{corollary}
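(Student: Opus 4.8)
The plan is to deduce Corollary \ref{finite-global} directly from Theorem \ref{exponential}, via the classical inclusion $\mathcal{A}_m^L\subset\mathcal{E}_m^L$ together with the monotonicity of the fractal dimension under set inclusion.

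First, I would exploit the invariance of the global attractor. By Theorem \ref{global}, $\mathcal{A}_m^L$ exists, is compact in $(\mathfrak{X}_m,\mathbf{d})$ and fully invariant, so that $\mathcal{S}^L(t)\mathcal{A}_m^L=\mathcal{A}_m^L$ for every $t\geq0$; in particular it is bounded in $\mathfrak{X}_m$, and I set $R=\sup_{\boldsymbol{\varphi}\in\mathcal{A}_m^L}\|\boldsymbol{\varphi}\|_{\mathcal{L}^2}<+\infty$. Fixing $\nu\in(0,1)$ and $q\in(2,+\infty)$ and applying the exponential attraction property (ii) of Theorem \ref{exponential} to the bounded set $B=\mathcal{A}_m^L$, invariance gives
\[
\mathrm{dist}_{\mathcal{H}^{1-\nu}\cap\mathcal{L}^q}(\mathcal{A}_m^L,\mathcal{E}_m^L)=\mathrm{dist}_{\mathcal{H}^{1-\nu}\cap\mathcal{L}^q}(\mathcal{S}^L(t)\mathcal{A}_m^L,\mathcal{E}_m^L)\leq\mathbb{M}_{\nu,q}(R)\,e^{-\kappa_{\nu,q}t},\quad\forall\,t\geq0.
\]
Because the left-hand side does not depend on $t$, letting $t\to+\infty$ forces $\mathrm{dist}_{\mathcal{H}^{1-\nu}\cap\mathcal{L}^q}(\mathcal{A}_m^L,\mathcal{E}_m^L)=0$.

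Next, I would upgrade this vanishing semidistance to a genuine set inclusion. Fix $\boldsymbol{a}\in\mathcal{A}_m^L$. The identity above provides a sequence $\{\boldsymbol{b}_n\}\subset\mathcal{E}_m^L$ with $\|\boldsymbol{a}-\boldsymbol{b}_n\|_{\mathcal{H}^{1-\nu}}\to0$, and since $\mathcal{H}^{1-\nu}\hookrightarrow\mathcal{L}^2$ continuously, also $\boldsymbol{b}_n\to\boldsymbol{a}$ in $\mathcal{L}^2$. As $\mathcal{E}_m^L$ is compact in $(\mathfrak{X}_m,\mathbf{d})$ and the metric $\mathbf{d}$ controls the $\mathcal{L}^2$-norm, a subsequence of $\{\boldsymbol{b}_n\}$ converges in $\mathbf{d}$, hence in $\mathcal{L}^2$, to some $\boldsymbol{b}\in\mathcal{E}_m^L$; by uniqueness of the $\mathcal{L}^2$-limit, $\boldsymbol{b}=\boldsymbol{a}$, so that $\boldsymbol{a}\in\mathcal{E}_m^L$. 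This establishes $\mathcal{A}_m^L\subset\mathcal{E}_m^L$.

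Finally, the conclusions follow immediately. Since the fractal dimension is monotone with respect to inclusion, property (iii) of Theorem \ref{exponential} yields $\mathrm{dim}_{\mathrm{F},\mathcal{H}^{1-\nu}}(\mathcal{A}_m^L)\leq\mathrm{dim}_{\mathrm{F},\mathcal{H}^{1-\nu}}(\mathcal{E}_m^L)\leq C_{m,\nu}$ and, in the same manner, $\mathrm{dim}_{\mathrm{F},\mathcal{L}^q}(\mathcal{A}_m^L)\leq C_{m,q}$ for all admissible $\nu,q$; moreover the $\mathcal{H}^1$-boundedness of $\mathcal{A}_m^L$ is inherited from that of $\mathcal{E}_m^L$. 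The argument is entirely standard, and the only step deserving slight care is the passage from the vanishing semidistance in the weaker norm to the inclusion, which rests on the compactness of $\mathcal{E}_m^L$ in the phase space together with the continuous embedding $\mathcal{H}^{1-\nu}\hookrightarrow\mathcal{L}^2$.
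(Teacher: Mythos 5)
Your proposal is correct and is exactly the standard argument the paper leaves implicit ("in view of the above theorem, we can immediately deduce"): the global attractor is contained in the exponential attractor, and the fractal dimension is monotone under inclusion. Your extra care in upgrading the vanishing semidistance to the inclusion $\mathcal{A}_m^L\subset\mathcal{E}_m^L$ (using the invariance of $\mathcal{A}_m^L$ and the compactness of $\mathcal{E}_m^L$ in a topology controlling $\mathcal{L}^2$) is a legitimate filling-in of the step the authors omit, not a different route.
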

	
	 The last result says that every global weak solution converges to a single equilibrium as $t\to+\infty$.
	
	\begin{theorem}
		\label{equilibrium}
		  Let the assumptions in Theorem \ref{exponential} hold. In addition, we assume that $\widehat{\beta}$, $\widehat{\beta}_\Gamma$ are real analytic on $(-1,1)$
          and $\widehat{\pi}$, $\widehat{\pi}_\Gamma$ are real analytic on $\mathbb{R}$.
          Then, every global weak solution $\boldsymbol{\varphi}$ to problem \eqref{model} satisfies
		 \begin{align}
		 \lim_{t\to+\infty}\|\boldsymbol{\varphi}(t)-\boldsymbol{\varphi}_\infty\|_{\mathcal{L}^\infty}=0,\label{convergence}
		 \end{align}
		 where $\boldsymbol{\varphi}_\infty$ is a solution to the following stationary problem that is independent of $L$:
		 \begin{align}
		 	\begin{cases}
		 		\mu_\infty=a_\Omega\varphi_\infty-J\ast\varphi_\infty+\beta(\varphi_\infty)+\pi(\varphi_\infty),&\text{a.e. in }\Omega,\\
		 		\theta_\infty=a_{\Gamma}\psi_\infty-K\circledast\psi_\infty+\beta_{\Gamma}(\psi_\infty)+\pi_{\Gamma}(\psi_\infty),&\text{a.e. on }\Gamma,\\
		 		\mu_\infty=\theta_\infty=\text{constant},&\\
		 		\overline{m}(\boldsymbol{\varphi}_\infty)=\overline{m}(\boldsymbol{\varphi}_0),&
		 	\end{cases}\label{stationary}
		 \end{align}
	with
    \begin{align}
	\mu_\infty=\theta_\infty&=\frac{1}{|\Omega|}\int_\Omega(a_\Omega\varphi_\infty-J\ast\varphi_\infty+\beta(\varphi_\infty)+\pi(\varphi_\infty))\,\mathrm{d}x\notag\\
	&=\frac{1}{|\Gamma|}\int_\Gamma(a_{\Gamma}\psi_\infty-K\circledast\psi_\infty+\beta_{\Gamma}(\psi_\infty)+\pi_{\Gamma}(\psi_\infty))\,\mathrm{d}S.\label{mu_s}
\end{align}
	\end{theorem}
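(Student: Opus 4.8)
The plan is to follow the Łojasiewicz--Simon strategy, exploiting the gradient-flow structure hidden in the energy equality \eqref{energyeq}. Observe first that, since the bilinear form $a_L$ only sees gradients and the difference $\mu-\theta$, the dissipation in \eqref{energyeq} equals $\|\mathbf{P}(\mu,\theta)\|_{\mathcal{H}^1_{L,0}}^2$; moreover, the variational formulation in Definition \ref{weakdefn}(ii) together with the solution operator $\mathfrak{S}^L$ gives $\mathfrak{S}^L(\partial_t\boldsymbol{\varphi})=-\mathbf{P}(\mu,\theta)$, so that $\|\partial_t\boldsymbol{\varphi}\|_{L,0,*}=\|\mathbf{P}(\mu,\theta)\|_{\mathcal{H}^1_{L,0}}$. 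Hence \eqref{energyeq} reduces to the identity $\frac{\mathrm{d}}{\mathrm{d}t}E(\boldsymbol{\varphi}(t))=-\|\partial_t\boldsymbol{\varphi}(t)\|_{L,0,*}^2$, exhibiting the evolution as a gradient flow in the $\|\cdot\|_{L,0,*}$ metric and showing that $t\mapsto E(\boldsymbol{\varphi}(t))$ is nonincreasing and bounded below, hence convergent to some $E_\infty$.

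First I would establish precompactness of the orbit and the structure of the $\omega$-limit set. The absorbing set bounded in $\mathcal{H}^1$ (already used for Theorem \ref{global}) keeps the trajectory $\{\boldsymbol{\varphi}(t):t\geq\tau\}$ bounded in $\mathcal{H}^1$, while the strict separation property forces $\|\boldsymbol{\varphi}(t)\|_{\mathcal{L}^\infty}\leq 1-\delta$ for $t\geq\tau$, so that $\beta(\varphi),\beta_\Gamma(\psi)$ remain uniformly bounded and the potentials behave as smooth (indeed analytic) functions on the relevant range. Consequently the orbit is relatively compact in $\mathcal{L}^2$, and $\omega(\boldsymbol{\varphi}_0)$ is non-empty, compact and connected. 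Passing to the limit along a sequence $t_n\to+\infty$ in the weak formulation, using $\int_0^\infty\|\partial_t\boldsymbol{\varphi}\|_{L,0,*}^2\,\mathrm{d}t<+\infty$ (whence $\nabla\mu,\nabla_\Gamma\theta\to0$ and $\theta-\mu\to0$ along such sequences), every $\boldsymbol{\varphi}_\infty\in\omega(\boldsymbol{\varphi}_0)$ solves the stationary problem \eqref{stationary} with $\overline{m}(\boldsymbol{\varphi}_\infty)=m$, and $E\equiv E_\infty$ on $\omega(\boldsymbol{\varphi}_0)$.

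Next I would invoke the generalized Łojasiewicz--Simon inequality (Lemma \ref{LS}): for each equilibrium $\boldsymbol{\varphi}_\infty$ there exist $\vartheta\in(0,1/2]$, $C>0$ and $\sigma>0$ such that $|E(\boldsymbol{\varphi})-E_\infty|^{1-\vartheta}\leq C\|\mathbf{P}(\mu,\theta)\|_{\mathcal{H}^1_{L,0}}=C\|\partial_t\boldsymbol{\varphi}\|_{L,0,*}$ whenever $\|\boldsymbol{\varphi}-\boldsymbol{\varphi}_\infty\|_{\mathcal{L}^2}<\sigma$. Combined with the gradient-flow identity this yields, on any interval where the solution stays in such a neighborhood, the differential inequality $-\frac{\mathrm{d}}{\mathrm{d}t}\big(E(\boldsymbol{\varphi}(t))-E_\infty\big)^{\vartheta}\geq\frac{\vartheta}{C}\|\partial_t\boldsymbol{\varphi}(t)\|_{L,0,*}$. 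A standard covering argument over the connected compact set $\omega(\boldsymbol{\varphi}_0)$ (finitely many Łojasiewicz--Simon neighborhoods with a common exponent) shows the trajectory eventually enters and remains in a single such neighborhood; integrating then gives $\int_{t_0}^\infty\|\partial_t\boldsymbol{\varphi}(t)\|_{L,0,*}\,\mathrm{d}t<+\infty$, so $\boldsymbol{\varphi}(t)$ is Cauchy and converges to a single $\boldsymbol{\varphi}_\infty$ in $\|\cdot\|_{L,0,*}$. Precompactness in $\mathcal{L}^2$ together with uniqueness of this limit upgrades the convergence to $\|\boldsymbol{\varphi}(t)-\boldsymbol{\varphi}_\infty\|_{\mathcal{L}^2}\to0$, and finally the $\mathcal{L}^2$--$\mathcal{L}^\infty$ smoothing property (Lemma \ref{L2-L infty}), resting on the Alikakos--Moser iteration and the regularity of $\boldsymbol{\varphi}_\infty$, promotes this to \eqref{convergence}.

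I expect the main obstacle to be the Łojasiewicz--Simon inequality of Lemma \ref{LS} in this nonlocal, coupled bulk--surface setting. Its derivation via the abstract result \cite[Theorem 6]{GG03} requires verifying that the energy is analytic on a neighborhood of $\boldsymbol{\varphi}_\infty$ in a suitable space (guaranteed by strict separation, which keeps the analysis away from the singular points $\pm1$, together with the analyticity of $\widehat{\beta},\widehat{\beta}_\Gamma,\widehat{\pi},\widehat{\pi}_\Gamma$) and that the linearization $E''(\boldsymbol{\varphi}_\infty)$ is Fredholm; the latter hinges on the compactness of the convolution operator $\mathbb{J}$ from Remark \ref{compact-operator}, so that $E''$ is a compact perturbation of an invertible operator. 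Setting up the correct functional-analytic framework respecting the generalized mean constraint and the bulk--surface coupling, and handling the covering of $\omega(\boldsymbol{\varphi}_0)$, are the delicate points; the remaining steps become routine once these are in place.
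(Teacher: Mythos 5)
Your proposal follows essentially the same route as the paper: it assembles exactly the four ingredients the paper lists (the characterization of $\omega(\boldsymbol{\varphi}_0)$ as uniformly separated equilibria, the energy identity, the generalized {\L}ojasiewicz--Simon inequality of Lemma \ref{LS} combined with the Poincar\'e inequality to pass from $\|\boldsymbol{\mu}-\overline{m}(\boldsymbol{\mu})\|_{\mathcal{L}^2}$ to $\|\partial_t\boldsymbol{\varphi}\|_{L,0,\ast}$, and the $\mathcal{L}^2$--$\mathcal{L}^\infty$ smoothing of Lemma \ref{L2-L infty}) and then runs the standard {\L}ojasiewicz--Simon integration argument that the paper delegates to \cite[Theorem 2.21]{GG}. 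The only difference is that you spell out the final covering-and-integration step explicitly, which is correct and consistent with the cited reference.
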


Before ending this section, we give some comments on the problem setting and results.

 \begin{remark}\rm
    For the phase space $\mathfrak{X}_m$, we can also define the following metric
    \[\widetilde{\mathbf{d}}(\boldsymbol{\varphi}_1,\boldsymbol{\varphi}_2)=\|\boldsymbol{\varphi}_1-\boldsymbol{\varphi}_2\|_{\mathcal{L}^2} +\Big|\int_\Omega\widehat{\beta}(\varphi_1)-\widehat{\beta}(\varphi_2)\,\mathrm{d}x\Big| +\Big|\int_\Gamma\widehat{\beta}_\Gamma(\psi_1)-\widehat{\beta}_\Gamma(\psi_2)\,\mathrm{d}S\Big|.\]
    A similar metric was also used in the study of the (local) Cahn--Hilliard equation with dynamic boundary conditions (see \cite{FW}). Under assumption $\mathbf{(A2)}$, we can verify that $(\mathfrak{X}_m,\widetilde{\mathbf{d}})$ is a complete metric space. Actually, for a Cauchy sequence $\{\bm{\varphi}_n\}\subset (\mathfrak{X}_m,\widetilde{\mathbf{d}})$ with the limit denoted by $\bm{\varphi}$, we easily find $\bm{\varphi}\in \mathcal{L}^2$. Furthermore, at least for a subsequence (not relabeled for simplicity), it holds $\bm{\varphi}_n\to \bm{\varphi}$ almost everywhere in $\Omega \times \Gamma$.
    Hence, we can apply Fatou's lemma to conclude
    \begin{align}
    \int_\Omega \widehat{\beta}(\varphi)\,\mathrm{d}x \leq \liminf_{n\to +\infty}  \int_\Omega \widehat{\beta}(\varphi_n)\,\mathrm{d}x,
    \quad
     \int_\Gamma \widehat{\beta}_\Gamma(\psi)\,\mathrm{d}S \leq
    \liminf_{n\to +\infty}  \int_\Gamma \widehat{\beta}_\Gamma (\psi_n)\,\mathrm{d}S.
    \label{conv-Fatou}
    \end{align}
    Besides, by Lebesgue's dominated convergence theorem, we have
    $$
    \overline{m}(\bm{\varphi}) = \lim_{n\to +\infty} \overline{m}(\bm{\varphi}_n).
    $$
    As a result, $\bm{\varphi}\in \mathfrak{X}_m$.
Since $\widehat{\beta},\widehat{\beta}_\Gamma\in C([-1,1])$, we can apply Lebesgue's dominated convergence theorem to conclude
    \begin{align}
    \lim_{n\to +\infty}  \int_\Omega \widehat{\beta}(\varphi_n)\,\mathrm{d}x = \int_\Omega \widehat{\beta}(\varphi)\,\mathrm{d}x,
    \quad
    \lim_{n\to +\infty}  \int_\Gamma \widehat{\beta}_\Gamma (\psi_n)\,\mathrm{d}S = \int_\Gamma \widehat{\beta}_\Gamma(\psi)\,\mathrm{d}S.
    \label{conv-betah}
   \end{align}
On the other hand, by the definition of $\widetilde{\mathbf{d}}$, $\{\int_\Omega \widehat{\beta}(\varphi_n)\,\mathrm{d}x\}$, $\{ \int_\Gamma \widehat{\beta}_\Gamma (\psi_n)\,\mathrm{d}S \}$ converge. Hence, \eqref{conv-betah} holds for the whole sequence, which implies that $\bm{\varphi}_n\to \bm{\varphi}$ in $(\mathfrak{X}_m,\widetilde{\mathbf{d}})$.

    For more singular potentials $\widehat{\beta},\widehat{\beta}_\Gamma$ that may not be continuous at $\pm 1$, some differences arise. For example, let us consider the following potential functions (see \cite{MZ04,Schimperna})
    \[
    \widehat{\beta}(s)=\widehat{\beta}_\Gamma(s)=-\text{ln}(1-s^2),\quad s\in(-1,1).
    \]
    It is obvious that $\widehat{\beta}, \widehat{\beta}_\Gamma\in C^2(-1,1)$, but $\widehat{\beta}, \widehat{\beta}_\Gamma\notin C([-1,1])$. We can still verify that  $\bm{\varphi}\in \mathfrak{X}_m$. However, due to the unboundedness of $\widehat{\beta}, \widehat{\beta}_\Gamma$ near $\pm 1$, we are unable to recover the convergence \eqref{conv-betah} using Lebesgue's dominated convergence theorem. Thus, it is not clear whether $\bm{\varphi}_n\to \bm{\varphi}$ in $(\mathfrak{X}_m,\widetilde{\mathbf{d}})$ holds.
    Based on the observation above, we adopt a definition of the metric similar to that in \cite{RS04}, so that our results are valid for a broader class of nonlinearities $\widehat{\beta}, \widehat{\beta}_\Gamma$ (see also Remark \ref{nonlinear-beta}  below).
    \end{remark}

     \begin{remark} \label{nonlinear-beta} \rm
      We note that the logarithmic potential \eqref{log} (with continuous extension at $\pm1$) satisfies the assumptions $\mathbf{(A2)}$, $(\mathbf{A6})$--$\mathbf{(A9)}$. Indeed, the continuity of  $\widehat{\beta}, \widehat{\beta}_\Gamma$ at $\pm 1$ are not essential. For more singular potentials $\widehat{\beta},\widehat{\beta}_\Gamma\in C^2(-1,1)$ instead of $C([-1,1])\cap C^2(-1,1)$, the results established in \cite{LvWu-4} are still valid, including the well-posendess, the regularity propagation and the strict separation property of global weak solutions. Furthermore, one can check that all results obtained in this study hold as well.
      Nevertheless, the stronger singularity of potential functions yields some differences in the analysis, for instance, in the derivation of the energy equality. In \cite{LvWu-4}, we first used the regularizing property of global weak solutions to establish
          \[E(\boldsymbol{\varphi}(t))-E(\boldsymbol{\varphi}(\tau))=-\int_\tau^t \|\partial_t\boldsymbol{\varphi}(s)\|_{L,0,\ast}^2\,\mathrm{d}s,\quad\forall\,t\geq\tau>0.\]
          Then, by the continuity property $\widehat{\beta},\widehat{\beta}_\Gamma\in C([-1,1])$ and Lebesgue's dominated convergence theorem, we could pass to the limit as $\tau\to0$ to obtain the energy equality \eqref{energyeq}.
          For more singular potentials $\widehat{\beta},\widehat{\beta}_\Gamma\in C^2(-1,1)$ only, the argument based on Lebesgue's dominated convergence theorem does not apply. Instead, one can employ the generalized chain rule \cite[Lemma 4.1]{RS04} for maximal monotone operators to establish the energy equality.
\end{remark}

\section{Existence of the Global Attractor for $L\in[0,+\infty)$}
\setcounter{equation}{0}
	
In this section, we establish the existence of the global attractor $\mathcal{A}_m^L$
for the dynamical system $(\mathfrak{X}_m,\mathcal{S}^L(t))$ for any $L\in[0,+\infty)$
and study the stability of the family $\{\mathcal{A}_m^L\}_{L\geq0}$ at $L=0$.

\subsection{Existence}	
\begin{lemma}
\label{dissipative}
Let $(\boldsymbol{\varphi},\boldsymbol{\mu})$ be the unique global weak solution to problem \eqref{model}
subject to the initial datum $\boldsymbol{\varphi}_0\in\mathfrak{X}_m$.
Then, for all $t\geq0$, it holds
\begin{align}
&E(\boldsymbol{\varphi}(t))
+\omega\int_t^{t+1}\|\mathbf{P}\boldsymbol{\mu}(s)\|_{\mathcal{H}_{L,0}^1}^2\,\mathrm{d}s\leq
E(\boldsymbol{\varphi}_0)e^{-\omega t}+M_1\big(1+\widehat{\beta}(\overline{m}(\boldsymbol{\varphi}_0)) +\widehat{\beta}_\Gamma(\overline{m}(\boldsymbol{\varphi}_0))\big),
\label{diss}
\end{align}
where the positive constants $\omega$ and $M_1$ depend on $J$, $K$, $\Omega$, $\Gamma$ and the parameters in \eqref{model},
but are independent of the initial data.
\end{lemma}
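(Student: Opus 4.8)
The plan is to combine the energy dissipation law with a coercivity bound for $E$ and then run a Gronwall argument. The first observation is that the dissipation integrand appearing in the energy equality \eqref{energyeq} is precisely $\|\mathbf{P}\boldsymbol{\mu}\|_{\mathcal{H}_{L,0}^1}^2$. Indeed, writing $\boldsymbol{\mu}=(\mu,\theta)$ and projecting onto the mean-zero subspace alters neither the gradients $\nabla\mu,\nabla_\Gamma\theta$ nor the difference $\mu-\theta$, so $a_L(\mathbf{P}\boldsymbol{\mu},\mathbf{P}\boldsymbol{\mu})=\|\nabla\mu\|_H^2+\|\nabla_\Gamma\theta\|_{H_\Gamma}^2+\chi(L)\|\mu-\theta\|_{H_\Gamma}^2$. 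Consequently $t\mapsto E(\boldsymbol{\varphi}(t))$ is absolutely continuous with $\frac{\mathrm{d}}{\mathrm{d}t}E(\boldsymbol{\varphi}(t))+\|\mathbf{P}\boldsymbol{\mu}(t)\|_{\mathcal{H}_{L,0}^1}^2=0$ for a.e.\ $t$.

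The heart of the matter is to establish a coercivity estimate of the form $E(\boldsymbol{\varphi})\le \eta\|\mathbf{P}\boldsymbol{\mu}\|_{\mathcal{H}_{L,0}^1}^2+C_\eta\big(1+\widehat{\beta}(m)+\widehat{\beta}_\Gamma(m)\big)$ for arbitrarily small $\eta>0$. Since $\|\boldsymbol{\varphi}\|_{\mathcal{L}^\infty}\le 1$, the quadratic convolution terms and the $\widehat{\pi},\widehat{\pi}_\Gamma$ contributions in $E$ are bounded by constants depending only on $J,K,\widehat{\pi},\widehat{\pi}_\Gamma,\Omega,\Gamma$ (via $(\mathbf{A1})$, $(\mathbf{A3})$ and Young's convolution inequality), so only the singular part $\int_\Omega\widehat{\beta}(\varphi)\,\mathrm{d}x+\int_\Gamma\widehat{\beta}_\Gamma(\psi)\,\mathrm{d}S$ needs care. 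Convexity of $\widehat{\beta},\widehat{\beta}_\Gamma$ yields $\widehat{\beta}(\varphi)\le\widehat{\beta}(m)+\beta(\varphi)(\varphi-m)$ (and likewise on $\Gamma$), reducing the task to bounding $\int_\Omega\beta(\varphi)(\varphi-m)\,\mathrm{d}x+\int_\Gamma\beta_\Gamma(\psi)(\psi-m)\,\mathrm{d}S$.

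I would then substitute $\beta(\varphi)=\mu-a_\Omega\varphi+J\ast\varphi-\pi(\varphi)$ and $\beta_\Gamma(\psi)=\theta-a_\Gamma\psi+K\circledast\psi-\pi_\Gamma(\psi)$, which rewrites this quantity as $(\boldsymbol{\mu},\boldsymbol{\varphi}-m\mathbf{1})_{\mathcal{L}^2}$ minus terms whose factors are all bounded, hence up to a constant. The decisive point is that mass conservation $\overline{m}(\boldsymbol{\varphi})=m$ forces $\boldsymbol{\varphi}-m\mathbf{1}\in\mathcal{L}^2_{(0)}$, so the constant part $\overline{m}(\boldsymbol{\mu})\mathbf{1}$ of $\boldsymbol{\mu}$ pairs to zero and $(\boldsymbol{\mu},\boldsymbol{\varphi}-m\mathbf{1})_{\mathcal{L}^2}=(\mathbf{P}\boldsymbol{\mu},\boldsymbol{\varphi}-m\mathbf{1})_{\mathcal{L}^2}$. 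Cauchy--Schwarz, the Poincaré inequality \eqref{Poin}, the crude bound $\|\boldsymbol{\varphi}-m\mathbf{1}\|_{\mathcal{L}^2}\le 2(|\Omega|+|\Gamma|)^{1/2}$ (a consequence of $\|\boldsymbol{\varphi}\|_{\mathcal{L}^\infty}\le1$ and $|m|<1$), and Young's inequality then give $(\mathbf{P}\boldsymbol{\mu},\boldsymbol{\varphi}-m\mathbf{1})_{\mathcal{L}^2}\le \eta\|\mathbf{P}\boldsymbol{\mu}\|_{\mathcal{H}_{L,0}^1}^2+C_\eta$, closing the coercivity estimate. This is the step I expect to be the main obstacle: the singular energy is controlled only because the a priori $L^\infty$ bound renders $\boldsymbol{\varphi}-m\mathbf{1}$ uniformly bounded, while mass conservation allows $\boldsymbol{\mu}$ to be replaced by its projection so that \eqref{Poin} turns the gradient dissipation into an $\mathcal{L}^2$-control of $\mathbf{P}\boldsymbol{\mu}$. (For $L=0$ one additionally uses $\mu|_\Gamma=\theta$, so that $\mathbf{P}\boldsymbol{\mu}\in\mathcal{V}^1_{(0)}=\mathcal{H}^1_{0,0}$ and \eqref{Poin} remains applicable.)

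Finally, inserting the coercivity bound with $\eta$ small into the dissipation law produces a differential inequality $\frac{\mathrm{d}}{\mathrm{d}t}E+\omega E\le C_0$ with $C_0=M\big(1+\widehat{\beta}(m)+\widehat{\beta}_\Gamma(m)\big)$, and Gronwall's lemma gives the pointwise decay $E(\boldsymbol{\varphi}(t))\le E(\boldsymbol{\varphi}_0)e^{-\omega t}+C_0/\omega$. For the time-integral of the dissipation I would integrate the energy identity over $[t,t+1]$ to get $\int_t^{t+1}\|\mathbf{P}\boldsymbol{\mu}(s)\|_{\mathcal{H}_{L,0}^1}^2\,\mathrm{d}s=E(\boldsymbol{\varphi}(t))-E(\boldsymbol{\varphi}(t+1))$, and then combine the decay just obtained with the uniform lower bound $E\ge -C_1$ (valid since $\widehat{\beta},\widehat{\beta}_\Gamma\ge 0$ and the smooth terms are bounded, with $C_1$ independent of the data). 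Adding the two bounds yields \eqref{diss}, where $\omega$ and $M_1$ depend only on $J,K,\Omega,\Gamma$ and the parameters of \eqref{model} but not on $\boldsymbol{\varphi}_0$.
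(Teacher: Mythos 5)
Your proposal is correct, and the core algebraic mechanism is the same as the paper's: the energy identity, the convexity inequality $\widehat{\beta}(\varphi)\le\widehat{\beta}(m)+\beta(\varphi)(\varphi-m)$, substitution of the constitutive relation for $\boldsymbol{\mu}$, replacement of $\boldsymbol{\mu}$ by $\mathbf{P}\boldsymbol{\mu}$ via mass conservation, the Poincar\'e inequality \eqref{Poin}, and Gronwall. The route is nevertheless genuinely different in where the computation is performed. The paper derives the estimate for the Yosida/Galerkin approximations $(\boldsymbol{\varphi}_\varepsilon^L,\boldsymbol{\mu}_\varepsilon^L)$ and passes to the limits $\varepsilon\to0$ and (for $L=0$) $L\to0$; since the approximating solutions do \emph{not} satisfy $\|\boldsymbol{\varphi}_\varepsilon\|_{\mathcal{L}^\infty}\le1$, the paper must invoke the quadratic lower bound \eqref{below} on $E_\varepsilon$ to absorb the $\|\boldsymbol{\varphi}_\varepsilon\|_{\mathcal{L}^2}^2$ terms, and must track the $\varepsilon$- and $L$-independence of all constants. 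You instead work directly with the weak solution, where $|\varphi|<1$ a.e.\ and $\beta(\varphi)\in L^2$ are already available from Definition \ref{weakdefn}, so every manipulation (testing the a.e.\ identity for $\mu$ against the bounded function $\varphi-m$, the pointwise convexity inequality, the absolute continuity of $t\mapsto E(\boldsymbol{\varphi}(t))$ from \eqref{energyeq}) is rigorous without any approximation scheme, and the convolution/$\widehat{\pi}$ terms are bounded crudely by constants. This buys a shorter, self-contained proof of Lemma \ref{dissipative} that treats $L\in[0,+\infty)$ uniformly (for $L=0$ your observation that $\mu|_\Gamma=\theta$ keeps $\mathbf{P}\boldsymbol{\mu}$ in $\mathcal{V}^1_{(0)}$ is exactly the right point) and avoids both the lower-bound lemma quoted from \cite{GGG} and the limit passages. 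What it does not buy is the $\varepsilon$-uniform estimate \eqref{approdiss} itself, which the paper reuses in the proof of Lemma \ref{regular-absorbing}; so your argument replaces the paper's proof of this lemma but not its role in the subsequent regularity bootstrap. Two small cosmetic remarks: your final combination of the decay estimate with $\int_t^{t+1}\|\mathbf{P}\boldsymbol{\mu}\|^2_{\mathcal{H}^1_{L,0}}\,\mathrm{d}s=E(\boldsymbol{\varphi}(t))-E(\boldsymbol{\varphi}(t+1))$ produces a harmless multiplicative constant in front of $E(\boldsymbol{\varphi}_0)e^{-\omega t}$ (the paper's own derivation has the same feature), and the constants you obtain depend on $L$ through $C_{\mathrm{P}}$, which is consistent with the statement since $L$ is among the parameters of \eqref{model}.
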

\begin{proof}
Let $(\boldsymbol{\varphi}^L,\boldsymbol{\mu}^L)$ be the unique global weak solution to problem \eqref{model} corresponding to $L\in[0,+\infty)$
and $(\boldsymbol{\varphi}_\varepsilon^L,\boldsymbol{\mu}_\varepsilon^L)$ be the unique weak solution to the approximating problem \eqref{appro-model}
corresponding to $(\varepsilon,L)\in(0,\varepsilon^\ast)\times(0,+\infty)$.
The total free energy of the approximating problem is given by
\begin{align}
	E_{\varepsilon}(\boldsymbol{\varphi}^L_\varepsilon)&=\frac{1}{2}\int_{\Omega}a_{\Omega}(\varphi_\varepsilon^L)^{2}\,\mathrm{d}x-\frac{1}{2}\int_{\Omega}(J\ast\varphi^L_\varepsilon)\varphi^L_\varepsilon\,\mathrm{d}x+\int_{\Omega}(\widehat{\beta}_{\varepsilon}(\varphi^L_\varepsilon)+\widehat{\pi}(\varphi^L_\varepsilon))\,\mathrm{d}x\notag\\
	&\quad+\frac{1}{2}\int_{\Gamma}a_{\Gamma}(\psi_\varepsilon^L)^{2}\,\mathrm{d}S-\frac{1}{2}\int_{\Gamma}(K\circledast\psi^L_\varepsilon)\psi^L_\varepsilon\,\mathrm{d}S+\int_{\Gamma}(\widehat{\beta}_{\Gamma,\varepsilon}(\psi^L_\varepsilon)+\widehat{\pi}_{\Gamma}(\psi^L_\varepsilon))\,\mathrm{d}S.\notag
\end{align}
We first derive dissipative estimates for the approximating solutions.
Let us claim that there exists $\overline{\varepsilon}\in(0,\varepsilon^\ast)$ such that,
for all $\varepsilon\in(0,\overline{\varepsilon})$ and $t\geq0$, it holds
\begin{align}
	&E_\varepsilon(\boldsymbol{\varphi}_\varepsilon^L(t)) +\omega\int_t^{t+1}\|\mathbf{P} \boldsymbol{\mu}_\varepsilon^L(s)\|_{\mathcal{H}_{L,0}^1}^2\,\mathrm{d}s \leq
	E_\varepsilon(\boldsymbol{\varphi}_0)e^{-\omega t}+M_1(1+\widehat{\beta}_\varepsilon(\overline{m}(\boldsymbol{\varphi}_0))
    +\widehat{\beta}_{\Gamma,\varepsilon}(\overline{m}(\boldsymbol{\varphi}_0))),
    \label{approdiss}
\end{align}
where the positive constants $\omega$ and $M_1$ depend on $J$, $K$, $\Omega$, $\Gamma$, and the parameters in system \eqref{model},
but are independent of the initial data and $\varepsilon$.
Below we provide a formal proof of \eqref{approdiss} and a rigorous justification can be done
by performing the same computations within a Galerkin approximation scheme
(see the proof of\cite[Proposition 3.1]{LvWu-4} for details).
According to \cite[Lemma 3.11]{GGG}, for any $\varepsilon\in(0,\overline{\varepsilon})$, it holds
\begin{align*}
	E_\varepsilon(\boldsymbol{z})\geq\Big( \frac{1}{4\overline{\varepsilon}}-\frac{\|J\|_{L^1(\Omega)}+\|K\|_{L^1(\Gamma)}}{2}-(\gamma_{1}+\gamma_{2})\Big)\|\boldsymbol{z}\|_{\mathcal{L}^2}^2-C(|\Omega|+|\Gamma|),
\end{align*}
where the constant $C>0$ depends on $\overline{\varepsilon}$, but is independent of $\varepsilon\in(0,\overline{
\varepsilon})$.
Therefore, for any $\Lambda>0$, there exists a constant ${C}_\Lambda>0$ such that
\begin{align}
	E_\varepsilon(\boldsymbol{z})\geq\Lambda\|\boldsymbol{z}\|_{\mathcal{L}^2}^2-{C_\Lambda}(|\Omega|+|\Gamma|),\quad\text{for all }\varepsilon\in(0,\overline{\varepsilon}),\label{below}
\end{align}
provided that $\overline{\varepsilon}$ is small enough.
%
%
Recalling that the following energy equality holds (cf. \cite[(3.26)]{LvWu-4})
\begin{align}
\frac{\mathrm{d}}{\mathrm{d}t}E_\varepsilon	(\boldsymbol{\varphi}_\varepsilon^L)+\|\nabla\mu_\varepsilon^L\|_H^2+\|\nabla_{\Gamma}\theta_\varepsilon^L\|_{H_\Gamma}^2+\frac{1}{L}\|\theta_\varepsilon^L-\mu_\varepsilon^L\|_{H_\Gamma}^2=0 \quad\text{for all }t>0.\label{energyequ}
\end{align}
In order to reconstruct the energy functional on the left-hand side,
testing \eqref{appro-model}$_2$ by $\varphi_\varepsilon^L-\overline{m}(\boldsymbol{\varphi}_0)$
and \eqref{appro-model}$_4$ by $\psi_\varepsilon^L-\overline{m}(\boldsymbol{\varphi}_0)$, we obtain
\begin{align}
	&\int_{\Omega}\beta_\varepsilon(\varphi_\varepsilon^L)(\varphi_\varepsilon^L-\overline{m}(\boldsymbol{\varphi}_0))\,\mathrm{d}x+\int_\Gamma \beta_{\Gamma,\varepsilon}(\psi_\varepsilon^L)(\psi_\varepsilon^L-\overline{m}(\boldsymbol{\varphi}_0))\,\mathrm{d}S \notag\\
	&\quad=\int_\Omega\mu_\varepsilon^L(\varphi_\varepsilon^L- \overline{m}(\boldsymbol{\varphi}_0))\,\mathrm{d}x +\int_\Gamma \theta_\varepsilon^L(\psi_\varepsilon^L- \overline{m}(\boldsymbol{\varphi}_0))\,\mathrm{d}S \notag\\
	&\qquad+\int_\Omega (J\ast\varphi_\varepsilon^L)(\varphi_\varepsilon^L- \overline{m}(\boldsymbol{\varphi}_0))\,\mathrm{d}x +\int_\Gamma (K\circledast\psi_\varepsilon^L)(\psi_\varepsilon^L- \overline{m}(\boldsymbol{\varphi}_0))\,\mathrm{d}S \notag\\
	&\qquad-\int_\Omega a_\Omega\varphi_\varepsilon^L(\varphi_\varepsilon^L- \overline{m}(\boldsymbol{\varphi}_0))\,\mathrm{d}x-\int_\Omega\pi(\varphi_\varepsilon^L)(\varphi_\varepsilon^L- \overline{m}(\boldsymbol{\varphi}_0))\,\mathrm{d}x \notag\\
	&\qquad-\int_\Gamma a_\Gamma\psi_\varepsilon^L(\psi_\varepsilon^L- \overline{m}(\boldsymbol{\varphi}_0))\,\mathrm{d}S-\int_\Gamma \pi_\Gamma(\psi_\varepsilon^L)(\psi_\varepsilon^L- \overline{m}(\boldsymbol{\varphi}_0))\,\mathrm{d}S.
    \label{L-3}
\end{align}
By the generalized Poincar\'e inequality \eqref{Poin}, the first line on the right-hand side of \eqref{L-3} can be estimated as follows:
\begin{align}
	&\int_\Omega\mu_\varepsilon^L(\varphi_\varepsilon^L-\overline{m}(\boldsymbol{\varphi}_0))\,\mathrm{d}x +\int_\Gamma \theta_\varepsilon^L(\psi_\varepsilon^L-\overline{m}(\boldsymbol{\varphi}_0))\,\mathrm{d}S \notag\\
	&\quad=\int_\Omega(\mu_\varepsilon^L-\overline{m}(\boldsymbol{\mu}_\varepsilon^L))\varphi_\varepsilon^L\,\mathrm{d}x +\int_\Gamma (\theta_\varepsilon^L-\overline{m}(\boldsymbol{\mu}_\varepsilon^L)) \psi_\varepsilon^L\,\mathrm{d}S \notag\\
	&\quad\leq C_1\Big(\|\nabla\mu_\varepsilon^L\|_H^2 +\|\nabla_{\Gamma}\theta_\varepsilon^L\|_{H_\Gamma}^2+\frac{1}{L}\|\theta_\varepsilon^L-\mu_\varepsilon^L\|_{H_\Gamma}^2\Big)^{\frac{1}{2}}\|\boldsymbol{\varphi}_\varepsilon^L\|_{\mathcal{L}^2}. \label{L-4}
\end{align}
For the other terms on the right-hand side of \eqref{L-3},
by \eqref{2.2}, $\mathbf{(A3)}$, H\"older's inequality and the generalized Poincar\'e inequality \eqref{Poin}, we obtain
\begin{align}
&\int_\Omega (J\ast\varphi_\varepsilon^L)(\varphi_\varepsilon^L- \overline{m}(\boldsymbol{\varphi}_0))\,\mathrm{d}x+\int_\Gamma (K\circledast\psi_\varepsilon^L)(\psi_\varepsilon^L- \overline{m}(\boldsymbol{\varphi}_0))\,\mathrm{d}S\notag\\
&\qquad-\int_\Omega a_\Omega\varphi_\varepsilon^L(\varphi_\varepsilon^L- \overline{m}(\boldsymbol{\varphi}_0))\,\mathrm{d}x
-\int_\Omega\pi(\varphi_\varepsilon^L)(\varphi_\varepsilon^L- \overline{m}(\boldsymbol{\varphi}_0))\,\mathrm{d}x\notag\\
&\qquad-\int_\Gamma a_\Gamma\psi_\varepsilon^L(\psi_\varepsilon^L
-\overline{m}(\boldsymbol{\varphi}_0))\,\mathrm{d}S
-\int_\Gamma \pi_\Gamma(\psi_\varepsilon^L)(\psi_\varepsilon^L- \overline{m}(\boldsymbol{\varphi}_0))\,\mathrm{d}S\notag\\
&\quad\leq C_2\big(1+\|\boldsymbol{\varphi}_\varepsilon^L\|_{\mathcal{L}^2}^2\big).
\label{L-5}
\end{align}
Next, by the convexity of $\widehat{\beta}_\varepsilon$ and $\widehat{\beta}_{\Gamma,\varepsilon}$, it holds
\begin{align*}
	E_\varepsilon(\boldsymbol{\varphi}_\varepsilon^L)&\leq \int_\Omega\beta_\varepsilon(\varphi_\varepsilon^L)(\varphi_\varepsilon^L- \overline{m}(\boldsymbol{\varphi}_0))\,\mathrm{d}x +\int_\Gamma\beta_{\Gamma,\varepsilon}(\psi_\varepsilon^L)(\psi_\varepsilon^L- \overline{m}(\boldsymbol{\varphi}_0))\,\mathrm{d}S\\
	&\quad+\Big(\frac{a_\ast+a_\circledast+\|J\|_{L^1(\Omega)} +\|K\|_{L^1(\Gamma)}}{2}+\gamma_{1}+\gamma_2\Big) \|\boldsymbol{\varphi}_\varepsilon^L\|_{\mathcal{L}^2}^2\\
	&\quad+\widehat{\beta}_\varepsilon( \overline{m}(\boldsymbol{\varphi}_0))|\Omega| +\widehat{\beta}_{\Gamma,\varepsilon}( \overline{m}(\boldsymbol{\varphi}_0))|\Gamma|\\
    &\quad+\left(|\widehat{\pi}(0)|+\frac{|\pi(0)|^2}{2\gamma_1}\right)|\Omega|+\left(|\widehat{\pi}_\Gamma(0)|+\frac{|\pi_\Gamma(0)|^2}{2\gamma_2}\right)|\Gamma|.
\end{align*}
Taking \eqref{L-3}, \eqref{L-4} and \eqref{L-5} into account, we obtain
\begin{align}
	E_\varepsilon(\boldsymbol{\varphi}_\varepsilon^L)&\leq \frac{C_1}{4}\Big(\|\nabla\mu_\varepsilon^L\|_H^2 +\|\nabla_{\Gamma}\theta_\varepsilon^L\|_{H_\Gamma}^2 +\frac{1}{L}\|\theta_\varepsilon^L-\mu_\varepsilon^L\|_{H_\Gamma}^2\Big) \notag\\
	&\quad+C_2+\Big(C_1+C_2+\frac{a_\ast+a_\circledast+\|J\|_{L^1(\Omega)}+\|K\|_{L^1(\Gamma)}}{2}+\gamma_{1}+\gamma_2\Big)\|\boldsymbol{\varphi}_\varepsilon^L\|_{\mathcal{L}^2}^2\notag\\
	&\quad+\widehat{\beta}_\varepsilon( \overline{m}(\boldsymbol{\varphi}_0))|\Omega| +\widehat{\beta}_{\Gamma,\varepsilon}( \overline{m}(\boldsymbol{\varphi}_0))|\Gamma| \notag\\
    &\quad+\left(|\widehat{\pi}(0)|+\frac{|\pi(0)|^2}{2\gamma_1}\right)|\Omega|+\left(|\widehat{\pi}_\Gamma(0)|+\frac{|\pi_\Gamma(0)|^2}{2\gamma_2}\right)|\Gamma|.\label{L-6}
\end{align}
In light of \eqref{below} and \eqref{L-6}, there exists $\overline{\varepsilon}>0$ sufficiently small
such that for any $\varepsilon\in (0,\overline{\varepsilon})$, we have
\begin{align}
	\frac{1}{2}E_\varepsilon(\boldsymbol{\varphi}_\varepsilon^L)&\leq\frac{C_1}{4}\Big(\|\nabla\mu_\varepsilon^L\|_H^2+\|\nabla_{\Gamma}\theta_\varepsilon^L\|_{H_\Gamma}^2+\frac{1}{L}\|\theta_\varepsilon^L-\mu_\varepsilon^L\|_{H_\Gamma}^2\Big)+C_2 \notag\\
&\quad+\left(\widehat{\beta}_\varepsilon( \overline{m}(\boldsymbol{\varphi}_0)) +|\widehat{\pi}(0)|+\frac{|\pi(0)|^2}{2\gamma_1}+{C}_\Lambda\right)|\Omega|\notag\\	&\quad+\left(\widehat{\beta}_{\Gamma,\varepsilon}(\overline{m}(\boldsymbol{\varphi}_0)) +|\widehat{\pi}_\Gamma(0)|+\frac{|\pi_\Gamma(0)|^2}{2\gamma_2}+{C}_\Lambda\right)|\Gamma|.
\label{L-7}
\end{align}
Multiplying \eqref{L-7} by a small positive constant, then adding to \eqref{energyequ},
we find the differential inequality
\begin{align}
&\frac{\mathrm{d}}{\mathrm{d}t}E_\varepsilon(\boldsymbol{\varphi}_\varepsilon^L)+\omega\Big(E_\varepsilon(\boldsymbol{\varphi}_\varepsilon^L) +\|\mathbf{P}\boldsymbol{\mu}_\varepsilon^L\|_{\mathcal{H}_{L,0}^1}^2\Big)\leq C(1+\widehat{\beta}_\varepsilon( \overline{m}(\boldsymbol{\varphi}_0)) +\widehat{\beta}_{\Gamma,\varepsilon}(\overline{m}(\boldsymbol{\varphi}_0))),
\notag
\end{align}
for some $\omega>0$ independent of $\varepsilon$. An application of Gronwall's inequality yields \eqref{approdiss}.
Then passing to the limit as $\varepsilon\to0$, we can conclude \eqref{diss} for the case $L\in(0,+\infty)$.

Finally, we establish \eqref{diss} with $L=0$ by passing to the limit as $L\to0$.
For this purpose, we need to show that the constants $\omega$ and $M_1$ are independent of $L\in(0,1)$.
Examining the above estimates, we find that only the constant $C_1$ in \eqref{L-4} may depend on $L$.
Nevertheless, by the same procedure as we did in \cite[Lemma 4.1]{LvWu-4}, the constant $C_1$ can be refined to be independent of $L\in(0,1)$.
Hence, passing to the limit as $L\to0$ in \eqref{diss} for $L\in(0,1)$, we obtain the dissipative estimate \eqref{diss} for $L=0$.
\end{proof}

 The following lemma gives an improved dissipativity of the semigroup $\mathcal{S}^L(t)$, ensuring the existence of a bounded absorbing set in the more regular space $\mathfrak{X}_m\cap\mathcal{H}^1$.

  \begin{lemma}
 	\label{regular-absorbing}
 There exists a constant $R>0$ such that the ball
 $$\mathcal{B}=B_{\mathcal{H}^1}(\boldsymbol{0},R)\cap \mathfrak{X}_m$$
 is a bounded absorbing set for $\mathcal{S}^L(t)$ in $\mathfrak{X}_m\cap \mathcal{H}^1$,
 where $B_{\mathcal{H}^1}(\boldsymbol{0},R)$ denotes the ball in $\mathcal{H}^1$ centered at $\boldsymbol{0}$ with radius $R$.
 Namely, for every bounded set $\mathcal{B}_0\subset\mathfrak{X}_m$, there exists a time $t_0:=t_0(\mathcal{B}_0)>0$ such that
  \begin{align*}
	\mathcal{S}^L(t)\mathcal{B}_0\subset\mathcal{B},\quad\forall\,t\geq t_0.
\end{align*}
\end{lemma}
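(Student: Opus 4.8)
The plan is to reduce everything to the Yosida-regularized problem \eqref{appro-model}, where $\beta_\varepsilon,\beta_{\Gamma,\varepsilon}$ are smooth and globally Lipschitz, to carry out the estimates there uniformly in $\varepsilon$ (and, for the case $L=0$, uniformly in $L\in(0,1)$), and finally to pass to the limit. The dissipative estimate of Lemma \ref{dissipative}, combined with the coercivity \eqref{below}, already furnishes a bounded absorbing set in $\mathcal{L}^2$ (indeed in the metric space $(\mathfrak{X}_m,\mathbf{d})$) together with the uniform time-integrated control $\int_t^{t+1}\|\mathbf{P}\boldsymbol{\mu}(s)\|_{\mathcal{H}^1_{L,0}}^2\,\mathrm{d}s\le C_0$ for all $t\ge t_1$, where $t_1$ depends only on the bounded set $\mathcal{B}_0$. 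The remaining task is therefore to upgrade the $\mathcal{L}^2$-dissipativity to an $\mathcal{H}^1$-dissipativity.

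The structural tool that compensates for the absence of any gradient term in the nonlocal energy $E$ is the spatial differentiation of the constitutive relations for $\mu$ and $\theta$. Differentiating $\mu=a_\Omega\varphi-J\ast\varphi+\beta(\varphi)+\pi(\varphi)$ yields
\[
\nabla\mu=\big(a_\Omega+\beta'(\varphi)+\pi'(\varphi)\big)\nabla\varphi+\varphi\nabla a_\Omega-(\nabla J)\ast\varphi,
\]
and analogously on $\Gamma$. By $(\mathbf{A2})$--$(\mathbf{A3})$ the coefficient $a_\Omega+\beta'(\varphi)+\pi'(\varphi)\ge a_\ast+\alpha-\gamma_1>0$ is bounded below by a positive constant, while the two remaining terms are controlled in $\mathcal{L}^2$ by $\|\boldsymbol{\varphi}\|_{\mathcal{L}^2}$ using $(\mathbf{A1})$, \eqref{2.3} and Young's convolution inequality. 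Solving for $\nabla\varphi$ (and $\nabla_\Gamma\psi$) therefore produces the recovery estimate $\|\boldsymbol{\varphi}\|_{\mathcal{H}^1}^2\le C\big(\|\mathbf{P}\boldsymbol{\mu}\|_{\mathcal{H}^1_{L,0}}^2+\|\boldsymbol{\varphi}\|_{\mathcal{L}^2}^2\big)$. Integrating over $[t,t+1]$ and invoking the two bounds from the previous step gives a uniform time-averaged $\mathcal{H}^1$ bound $\int_t^{t+1}\|\boldsymbol{\varphi}(s)\|_{\mathcal{H}^1}^2\,\mathrm{d}s\le C_1$ for $t\ge t_1$.

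To pass from this averaged bound to a pointwise-in-time bound I would apply the uniform Gronwall lemma to $y(t):=\|\partial_t\boldsymbol{\varphi}(t)\|_{L,0,*}^2$, which coincides with $\|\mathbf{P}\boldsymbol{\mu}(t)\|_{\mathcal{H}^1_{L,0}}^2$ since the weak formulation of Definition \ref{weakdefn}(ii) reads $\langle\partial_t\boldsymbol{\varphi},\boldsymbol{z}\rangle=-a_L(\mathbf{P}\boldsymbol{\mu},\boldsymbol{z})$, whence $\mathfrak{S}^L(\partial_t\boldsymbol{\varphi})=-\mathbf{P}\boldsymbol{\mu}$. Differentiating this identity in time and testing with $\mathfrak{S}^L(\partial_t\boldsymbol{\varphi})$ gives $\tfrac12\tfrac{\mathrm{d}}{\mathrm{d}t}\|\partial_t\boldsymbol{\varphi}\|_{L,0,*}^2=-\langle\partial_t\boldsymbol{\varphi},\partial_t\boldsymbol{\mu}\rangle$. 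Expanding $\partial_t\boldsymbol{\mu}$ through the constitutive relations and using that the quadratic forms associated with the even nonnegative kernels $J,K$ are nonnegative (so they may be discarded), that the convex parts supply the damping $-\int_\Omega\beta'(\varphi)(\partial_t\varphi)^2\,\mathrm{d}x$ and $-\int_\Gamma\beta_\Gamma'(\psi)(\partial_t\psi)^2\,\mathrm{d}S$, and that the Lipschitz parts $\pi,\pi_\Gamma$ are handled via $(\mathbf{A3})$, one is led to a differential inequality of the form $y'\le g\,y+h$ whose coefficients satisfy $\int_t^{t+1}(g+h)\,\mathrm{d}s\le C_2$, the integrals being absorbed by the averaged $\mathcal{H}^1$ bound of the previous step and the energy estimate. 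Since also $\int_t^{t+1}y\,\mathrm{d}s\le C_0$, the uniform Gronwall lemma yields $y(t)\le C_3$ for $t\ge t_1+1$, and the recovery estimate then produces the desired pointwise bound $\|\boldsymbol{\varphi}(t)\|_{\mathcal{H}^1}\le R$ for $t\ge t_0$, with $R$ independent of $\mathcal{B}_0$.

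All the above computations are formal and would be justified on the Faedo--Galerkin scheme for \eqref{appro-model}, after which one passes to the limit $\varepsilon\to0$ exactly as in the proof of Lemma \ref{dissipative}. For the limit case $L=0$ one additionally invokes $(\mathbf{A5})$ and the refinement established in \cite[Lemma 4.1]{LvWu-4} to make all constants independent of $L\in(0,1)$, and then lets $L\to0$. I expect the principal obstacle to be the derivation of the time-differentiated differential inequality with constants uniform in both $\varepsilon$ and $L$: one must absorb the non-convex contributions $\pi',\pi_\Gamma'$ and control the singular, a priori unbounded, terms $\beta'(\varphi),\beta_\Gamma'(\psi)$ together with the bulk--surface coupling term carrying the factor $\chi(L)=1/L$, while ensuring that the damping from the convex parts and the nonnegative kernel forms suffices to close the estimate under the sharp smallness threshold in $(\mathbf{A3})$. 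This is precisely the step where the nonlocal structure of the problem and the quantitative conditions on $\gamma_1,\gamma_2$ are indispensable.
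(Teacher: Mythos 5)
Your overall architecture coincides with the paper's: regularize via Yosida, obtain a pointwise-in-time bound on $\|\partial_t\boldsymbol{\varphi}\|_{L,0,\ast}=\|\mathbf{P}\boldsymbol{\mu}\|_{\mathcal{H}^1_{L,0}}$ for large times, recover the $\mathcal{H}^1$-norm of $\boldsymbol{\varphi}$ by differentiating the constitutive relations (your ``recovery estimate'' is exactly the paper's \eqref{L-1}--\eqref{L-2}), and pass to the limits $\varepsilon\to0$ and, under $(\mathbf{A5})$, $L\to0$. The paper implements the first step with difference quotients of the solution and the already-established continuous-dependence differential inequality, whereas you differentiate the equation in time; these are two versions of the same computation, and yours would indeed need the Galerkin justification you mention.

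However, the way you propose to close the time-differentiated inequality has a genuine gap. Writing $\tfrac12\tfrac{\mathrm{d}}{\mathrm{d}t}\|\partial_t\boldsymbol{\varphi}\|_{L,0,\ast}^2=-\langle\partial_t\boldsymbol{\varphi},\partial_t\boldsymbol{\mu}\rangle$ and discarding the nonnegative kernel quadratic form $\tfrac12\iint J(x-y)|\partial_t\varphi(x)-\partial_t\varphi(y)|^2\,\mathrm{d}y\,\mathrm{d}x$ (and its surface analogue) throws away precisely the contribution $a_\ast\|\partial_t\varphi\|_H^2$; what survives from the convex and Lipschitz parts is only $(\gamma_1-\alpha)\|\partial_t\varphi\|_H^2+(\gamma_2-\alpha)\|\partial_t\psi\|_{H_\Gamma}^2$, and assumption $(\mathbf{A3})$ only guarantees $\gamma_i<a_\ast+\alpha/(1+\alpha)$ (resp.\ $a_\circledast$), not $\gamma_i\le\alpha$. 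You are then left with a possibly positive multiple of $\|\partial_t\boldsymbol{\varphi}\|_{\mathcal{L}^2}^2$ on the right-hand side, which is controlled neither by $y=\|\partial_t\boldsymbol{\varphi}\|_{L,0,\ast}^2$ nor by any available time-integrated quantity (the energy identity only bounds $\int_t^{t+1}\|\partial_t\boldsymbol{\varphi}\|_{L,0,\ast}^2\,\mathrm{d}s$, not $\int_t^{t+1}\|\partial_t\boldsymbol{\varphi}\|_{\mathcal{L}^2}^2\,\mathrm{d}s$), so the uniform Gronwall lemma does not apply. The correct closure keeps the full coercivity $a_\ast+\alpha-\gamma_1>0$ from $a_\Omega+\beta'+\pi'$ and estimates the convolution term by duality and kernel smoothing, $\langle\partial_t\boldsymbol{\varphi},\mathbb{J}\partial_t\boldsymbol{\varphi}\rangle_{(\mathcal{H}^1)',\mathcal{H}^1}\le C\|\partial_t\boldsymbol{\varphi}\|_{L,0,\ast}\|\partial_t\boldsymbol{\varphi}\|_{\mathcal{L}^2}\le \tfrac{C_\ast}{2}\|\partial_t\boldsymbol{\varphi}\|_{\mathcal{L}^2}^2+C\|\partial_t\boldsymbol{\varphi}\|_{L,0,\ast}^2$, exactly as in the proof of \eqref{contiesti} and of Lemma \ref{strong-continuous} (note $\overline{m}(\partial_t\boldsymbol{\varphi})=0$, so no mean-value terms appear). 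With this correction your argument closes and reproduces the paper's conclusion; your concerns about the unboundedness of $\beta'$ and the $1/L$ coupling are unfounded, since $\beta'(\varphi)(\partial_t\varphi)^2\ge\alpha(\partial_t\varphi)^2$ enters with a favorable sign and the coupling is absorbed into the bilinear form $a_L$ and the operator $\mathfrak{S}^L$.
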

\begin{proof}
We first consider the case $L\in(0,+\infty)$. Let $(\boldsymbol{\varphi}_\varepsilon^L,\boldsymbol{\mu}_\varepsilon^L)$
be the solution to the approximate problem \eqref{appro-model}
with $(\varepsilon,L)\in(0,\overline{\varepsilon})\times(0,+\infty)$
and $(\boldsymbol{\varphi}^L,\boldsymbol{\mu}^L)$ be the unique global weak solution to problem \eqref{model} with $L\in[0,+\infty)$.
Taking the difference quotient in the approximating system \eqref{appro-model} and
using the same argument as in \cite[(3.68)]{LvWu-4}, we obtain
 \begin{align}
 	\Big\|\frac{\boldsymbol{\varphi}^L_\varepsilon(t+1+h)-\boldsymbol{\varphi}^L_\varepsilon(t+1)}{h}\Big\|_{L,0,\ast}^2\leq C\int_t^{t+1+h}\|\partial_t\boldsymbol{\varphi}^L_\varepsilon(s)\|_{L,0,\ast}^2\,\mathrm{d}s,\quad\forall\,t\geq0,\notag
 \end{align}
 which, together with $\|\mathbf{P}\boldsymbol{\mu}_\varepsilon^L\|_{\mathcal{H}_{L,0}^1}=\|\partial_t\boldsymbol{\varphi}_\varepsilon^L\|_{L,0,\ast}$ and \eqref{approdiss}, implies that
 \begin{align}
 &\Big\|\frac{\boldsymbol{\varphi}^L_\varepsilon(t+1+h)-\boldsymbol{\varphi}^L_\varepsilon(t+1)}{h}\Big\|_{L,0,\ast}^2\notag\\
 &\quad\leq C_3 E_\varepsilon(\boldsymbol{\varphi}_0)e^{-\omega t}+C_3(1+\widehat{\beta}_\varepsilon( \overline{m}(\boldsymbol{\varphi}_0)) +\widehat{\beta}_{\Gamma,\varepsilon}(\overline{m}(\boldsymbol{\varphi}_0))),\quad\forall\,t\geq0,\label{Lv-16}
 \end{align}
 where the constant $C_3>0$ is independent of the initial data and $\varepsilon\in(0,\overline{\varepsilon})$.
 Since the right-hand side of \eqref{Lv-16} is independent of $h\in(0,1)$,
 we can pass to the limit as $h\to0^+$ in \eqref{Lv-16} to obtain
 \begin{align}
 	\|\partial_t\boldsymbol{\varphi}^L_\varepsilon(t+1)\|_{L,0,\ast}^2 \leq C _3 E_\varepsilon(\boldsymbol{\varphi}_0)e^{-\omega t}+C_3(1+\widehat{\beta}_\varepsilon( \overline{m}(\boldsymbol{\varphi}_0)) +\widehat{\beta}_{\Gamma,\varepsilon}(\overline{m}(\boldsymbol{\varphi}_0))), \quad\forall\,t\geq0.
    \notag
 \end{align}
 By the definition of $\mathfrak{S}^L$, we see that
 \begin{align}
 	\|\mathbf{P}\boldsymbol{\mu}^L_\varepsilon(t+1)\|_{\mathcal{H}_{L,0}^1}^2 &=\|\mathfrak{S}^L(\partial_t \boldsymbol{\varphi}^L_\varepsilon(t+1))\|_{\mathcal{H}_{L,0}^1}^2
 \notag\\
    &=\|\partial_t\boldsymbol{\varphi}^L_\varepsilon(t+1)\|_{L,0,\ast}^2
    \notag\\
    &\leq C_3 E_\varepsilon(\boldsymbol{\varphi}_0)e^{-\omega t}+C_3(1+\widehat{\beta}_\varepsilon( \overline{m}(\boldsymbol{\varphi}_0)) +\widehat{\beta}_{\Gamma,\varepsilon}(\overline{m}(\boldsymbol{\varphi}_0))), \quad\forall\,t\geq0.\label{Lv-18}
 \end{align}
Taking the gradient of \eqref{appro-model}$_2$ and testing the resultant by $\nabla\varphi_\varepsilon^L$, we obtain
\begin{align*}
&\int_{\Omega}(a_\Omega+\beta_\varepsilon'(\varphi_\varepsilon^L)+\pi'(\varphi_\varepsilon^L))|\nabla\varphi_\varepsilon^L|^2\,\mathrm{d}x\\
&\quad= \int_\Omega\nabla\mu_\varepsilon^L\cdot\nabla\varphi_\varepsilon^L\,\mathrm{d}x-\int_\Omega\varphi_\varepsilon^L\nabla a_\Omega\cdot\nabla \varphi_\varepsilon^L\,\mathrm{d}x+\int_\Omega(\nabla J\ast\varphi_\varepsilon^L)\cdot\nabla\varphi_\varepsilon^L\,\mathrm{d}x.
\end{align*}
Then, by \eqref{2.1}, \eqref{2.3}, H\"older's inequality and Young's inequality for convolution, we get
\begin{align}
	\frac{\chi_1}{2}\|\nabla\varphi_\varepsilon^L\|_H^2\leq\frac{3}{2\chi_1}\|\nabla\mu_\varepsilon^L\|_H^2+\frac{3}{2\chi_1}\big(b^\ast+\|\nabla J\|_{L^1(\Omega)}^2\big)\|\varphi_\varepsilon^L\|_H^2,\label{L-1}
\end{align}
where $\chi_1=\alpha/(1+\alpha)+a_\ast-\gamma_{1}$. Similarly, it holds
\begin{align}
	\frac{\chi_2}{2}\|\nabla_\Gamma\psi_\varepsilon^L\|_{H_\Gamma}^2\leq\frac{3}{2\chi_2}\|\nabla_\Gamma\theta_\varepsilon^L\|_{H_\Gamma}^2+\frac{3}{2\chi_2}\big(b^\circledast+\|\nabla_\Gamma K\|_{L^1(\Gamma)}^2\big)\|\psi_\varepsilon^L\|_{H_\Gamma}^2,\label{L-2}
\end{align}
where $\chi_2=\alpha/(1+\alpha)+a_\circledast-\gamma_{2}$.
Hence, by \eqref{approdiss}, \eqref{below},  \eqref{Lv-18}, \eqref{L-1}, \eqref{L-2} and the facts
 \[0\leq \widehat{\beta}_\varepsilon(s)\leq \widehat{\beta}(s),\quad 0\leq \widehat{\beta}_{\Gamma,\varepsilon}(s)\leq \widehat{\beta}_\Gamma(s),\quad\forall\,s\in\mathbb{R},\]
 we obtain
 \begin{align}
 	\|\boldsymbol{\varphi}^L_\varepsilon(t+1)\|_{\mathcal{H}^1}^2\leq C E(\boldsymbol{\varphi}_0)e^{-\omega t}+C(1+\widehat{\beta}( \overline{m}(\boldsymbol{\varphi}_0)) +\widehat{\beta}_{\Gamma}( \overline{m}(\boldsymbol{\varphi}_0))), \quad\forall\,t\geq0.\label{Lv-19}
 \end{align}
 Since the right-hand side of \eqref{Lv-19} is independent of $\varepsilon\in(0,\overline{\varepsilon})$,
 after passing to the limit as $\varepsilon\to0$ in \eqref{Lv-19}, we see that
  \begin{align}
 	\|\boldsymbol{\varphi}^L(t+1)\|_{\mathcal{H}^1}^2\leq C E(\boldsymbol{\varphi}_0)e^{-\omega t}+C(1+\widehat{\beta}( \overline{m}(\boldsymbol{\varphi}_0)) +\widehat{\beta}_{\Gamma}( \overline{m}(\boldsymbol{\varphi}_0))), \quad\forall\,t\geq0.\label{Lv-20}
 \end{align}
 Taking a sufficiently large $R>0$, for any bounded set $\mathcal{B}_0\subset\mathfrak{X}_m$,
 we find there exists a time $t_0:=t_0(\mathcal{B}_0)>0$ such that
 \begin{align}
 	\mathcal{S}^L(t)\mathcal{B}_0\subset\mathcal{B},\qquad\forall\, t\geq t_0.\notag
 \end{align}
 This completes the proof of Lemma \ref{regular-absorbing} for $L\in(0,+\infty)$.

 Concerning the case $L=0$, we observe that only the constant $C_3$ in \eqref{Lv-18} may depend on $L\in(0,1)$.
 Under the additional assumption $(\mathbf{A5})$,
 the constant $C_3$ can be refined to be independent of $L\in(0,1)$ following the argument as in \cite[Lemma 5.5]{LvWu-4}.
 Hence, we can pass to the limit as $L\to0$ in \eqref{Lv-20}
 and obtain a similar result for the case $L=0$.
 The proof of Lemma \ref{regular-absorbing} is now complete.
\end{proof}


\noindent\textbf{Proof of Theorem \ref{global}.}
The dynamical system $(\mathfrak{X}_m,\mathcal{S}^L(t))$ is dissipative owing to Lemma \ref{dissipative}.
Moreover, the continuous dependence estimate \eqref{contiesti} implies that $\{\mathcal{S}^L(t)\}_{t\geq0}$
is a \emph{closed semigroup} on the phase space $\mathfrak{X}_m$ in the sense of \cite{PZ}.
%
%
From Lemma \ref{regular-absorbing}, we infer that $\mathcal{B}$ is a connected compact absorbing set for the dynamical system $(\mathfrak{X}_m,\mathcal{S}^L(t))$,
thus $\mathcal{B}$ is attracting as well.
Since $\mathcal{S}^L(t)\mathcal{B} \subset \mathcal{B}$ for every $t$ large enough,
the existence of the global attractor is an immediate consequence of the abstract result \cite[Corollary 6]{PZ}.
\hfill$\square$

\subsection{Stability of the global attractor for the case $L=0$}

We proceed to investigate the stability of the global attractor $\mathcal{A}_m^0$
associated with the dynamical system $(\mathfrak{X}_m,\mathcal{S}^0(t))$, with respect to perturbations $\mathcal{A}_m^L$ for small $L>0$.
Roughly speaking, this means that we need to study the asymptotic limit as $L\to0$ of the family $\{\mathcal{A}_m^L\}_{L>0}$
for the dynamical system $(\mathfrak{X}_m,\mathcal{S}^L(t))$ for $L>0$.
To provide a rigorous notion of such limit, we recall the following definition introduced in \cite[Theorem 7.2.8]{Chueshov}.

\begin{definition}\rm
\label{upper-semi}
    Let $X$ be a Banach space, and $\mathcal{M}$ be a metric space.
    Suppose that for any $\lambda\in\mathcal{M}$, $(X,\{\mathcal{S}^\lambda(t)\}_{t\geq0})$
    is a dynamical system possessing a global attractor $\mathcal{A}^\lambda\subset X$.
    Then, the family $\{\mathcal{A}^\lambda\}_{\lambda\in\mathcal{M}}$ is called upper semicontinuous
    at the point $\lambda_\ast\in\mathcal{M}$ if
    \[\lim_{\lambda\to\lambda_\ast}\text{dist}_X(\mathcal{A}^\lambda,\mathcal{A}^{\lambda_\ast})=0,\]
    where the non-symmetric Hausdorff semidistance is defined as
    \[\text{dist}_X(A,B):=\sup_{\boldsymbol{a}\in A}\inf_{\boldsymbol{b}\in B}\|\boldsymbol{a}-\boldsymbol{b}\|_X.\]
\end{definition}

To prove that the family $\{\mathcal{A}_m^L\}_{L\geq0}$ is upper semicontinuous at $L=0$,
we investigate the asymptotic behavior of a global weak solution $(\boldsymbol{\varphi}^L,\boldsymbol{\mu}^L)$ in the asymptotic limit as $L\to0$.

\begin{lemma}
    \label{asymptotic-Lto0}
    Suppose that the assumptions $(\mathbf{A1})$--$(\mathbf{A5})$ hold,
    and let $(\boldsymbol{\varphi}^L,\boldsymbol{\mu}^L)$ denote the unique weak solution to problem \eqref{model} with $L\in[0,1)$.
    Then, for any $T>1$, we have
    \begin{align}
        \boldsymbol{\varphi}^L\to \boldsymbol{\varphi}^0\quad\text{strongly in }\ C([1,T];\mathcal{L}^2)\ \ \text{as }\ L\to0.
        \label{improved-convergence}
    \end{align}
    Moreover, there exist constants $\widetilde{A}$, $\widetilde{B}>0$
    depending increasingly on $\|\boldsymbol{\varphi}_0\|_{\mathcal{L}^2}$ but not on $L$ such that for all $L\in[0,1)$,
    \begin{align}
        \|\boldsymbol{\varphi}^L-\boldsymbol{\varphi}^0\|_{C([1,T];\mathcal{L}^2)}\leq \widetilde{A} e^{\widetilde{B}T}L^{\frac{1}{4}}.\label{convergence-rate}
    \end{align}
\end{lemma}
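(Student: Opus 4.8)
The plan is to compare the two solutions first in the dual ($\ast$) topology and then bootstrap to $\mathcal{L}^2$ by interpolation. Fix $L\in(0,1)$ and set $\boldsymbol{w}:=\boldsymbol{\varphi}^L-\boldsymbol{\varphi}^0$. Since both solutions emanate from the same datum $\boldsymbol{\varphi}_0$ and the generalized mass is conserved, we have $\overline{m}(\boldsymbol{w}(t))=0$, so $\boldsymbol{w}(t)\in\mathcal{L}^2_{(0)}$ and $\boldsymbol{w}(0)=\boldsymbol{0}$. Subtracting the two variational formulations and testing \emph{only} against $\boldsymbol{z}=(z,z_\Gamma)\in\mathcal{V}^1_{(0)}$ (so that $z|_\Gamma=z_\Gamma$, which forces the $\chi(L)$-term in the $L>0$ equation to vanish identically) gives, for all such $\boldsymbol{z}$,
\[
\langle\partial_t\boldsymbol{w},\boldsymbol{z}\rangle
=-\int_\Omega\nabla(\mu^L-\mu^0)\cdot\nabla z\,\mathrm{d}x-\int_\Gamma\nabla_\Gamma(\theta^L-\theta^0)\cdot\nabla_\Gamma z_\Gamma\,\mathrm{d}S.
\]
The admissible multiplier is $\boldsymbol{\zeta}:=\mathfrak{S}^0\boldsymbol{w}\in\mathcal{V}^1_{(0)}$, for which the left-hand side equals $\tfrac12\frac{\mathrm{d}}{\mathrm{d}t}\|\boldsymbol{w}\|_{0,0,\ast}^2$.

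The heart of the argument is the identification of the $L$-induced error. Integrating by parts with the elliptic identities $-\Delta\zeta=\varphi^L-\varphi^0$, $-\Delta_\Gamma\zeta_\Gamma+\partial_\mathbf{n}\zeta=\psi^L-\psi^0$ and the $L=0$ relation $\zeta_\Gamma=\zeta|_\Gamma$, the right-hand side becomes
\[
-\int_\Omega(\mu^L-\mu^0)(\varphi^L-\varphi^0)\,\mathrm{d}x-\int_\Gamma(\theta^L-\theta^0)(\psi^L-\psi^0)\,\mathrm{d}S-\int_\Gamma\big[(\mu^L-\mu^0)|_\Gamma-(\theta^L-\theta^0)\big]\partial_\mathbf{n}\zeta\,\mathrm{d}S.
\]
The decisive observation is that the last bracket collapses to the boundary defect: since $\mu^0|_\Gamma=\theta^0$ and $L\partial_\mathbf{n}\mu^L=\theta^L-\mu^L$, one gets $(\mu^L-\mu^0)|_\Gamma-(\theta^L-\theta^0)=-(\theta^L-\mu^L)$, so this term is exactly $\int_\Gamma(\theta^L-\mu^L)\partial_\mathbf{n}\zeta\,\mathrm{d}S$. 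Inserting the chemical-potential expressions into the first two integrals and using the coercivity inherited from $(\mathbf{A1})$, $(\mathbf{A3})$ (the one behind \eqref{contiesti}, keeping the monotone $\beta$-contribution on the good side and estimating the convolution and $\pi$-terms in the dual norm, which is what produces the $T$-dependence) I expect to reach
\[
\tfrac12\tfrac{\mathrm{d}}{\mathrm{d}t}\|\boldsymbol{w}\|_{0,0,\ast}^2+c_0\|\boldsymbol{w}\|_{\mathcal{L}^2}^2\le \widetilde{B}\|\boldsymbol{w}\|_{0,0,\ast}^2+\Big|\int_\Gamma(\theta^L-\mu^L)\partial_\mathbf{n}\zeta\,\mathrm{d}S\Big|.
\]
For the error term, the elliptic regularity $\|\boldsymbol{\zeta}\|_{\mathcal{H}^2}\le C\|\boldsymbol{w}\|_{\mathcal{L}^2}$ and the trace inequality give $\|\partial_\mathbf{n}\zeta\|_{H_\Gamma}\le C\|\boldsymbol{w}\|_{\mathcal{L}^2}$, so Young's inequality absorbs it into $c_0\|\boldsymbol{w}\|_{\mathcal{L}^2}^2$ at the cost of a term $C\|\theta^L-\mu^L\|_{H_\Gamma}^2$.

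The required smallness comes from the energy equality \eqref{energyeq}: the dissipation contribution $\tfrac1L\|\theta^L-\mu^L\|_{H_\Gamma}^2$ integrates to at most $E(\boldsymbol{\varphi}_0)$, whence $\int_0^T\|\theta^L-\mu^L\|_{H_\Gamma}^2\,\mathrm{d}t\le CL$. Gronwall's lemma together with $\boldsymbol{w}(0)=\boldsymbol{0}$ then yields $\|\boldsymbol{w}(t)\|_{0,0,\ast}^2\le \widetilde{A}\,e^{\widetilde{B}T}L$ uniformly on $[0,T]$. To upgrade to $\mathcal{L}^2$ I use the interpolation identity $\|\boldsymbol{w}\|_{\mathcal{L}^2}^2=(\boldsymbol{w},\mathfrak{S}^0\boldsymbol{w})_{\mathcal{H}^1_{0,0}}\le C\|\boldsymbol{w}\|_{\mathcal{H}^1}\|\boldsymbol{w}\|_{0,0,\ast}$ combined with the uniform-in-$L$ regularizing bound $\sup_{t\ge1}\|\boldsymbol{\varphi}^L(t)\|_{\mathcal{H}^1}\le C(\|\boldsymbol{\varphi}_0\|_{\mathcal{L}^2})$ supplied by Lemma \ref{regular-absorbing} (this is precisely where $(\mathbf{A5})$ is needed). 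Since the square root halves the power of $L$, this delivers exactly the rate \eqref{convergence-rate} on $[1,T]$; the restriction $t\ge1$ is forced because the $\mathcal{H}^1$-smoothing is unavailable at small times.

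Finally, for the qualitative statement \eqref{improved-convergence} on all of $[0,T]$, the rate already gives uniform convergence on $[1,T]$, while the $\ast$-estimate above yields convergence in $C([0,T];\mathcal{V}^{-1}_{(0)})$. To cover the remaining interval I would invoke the uniform-in-$L$ bounds in $L^\infty(0,T;\mathcal{L}^2)\cap L^2(0,T;\mathcal{H}^1)$, together with the compactness framework of Remark \ref{compact-embedding}, to extract strong $L^2(0,T;\mathcal{L}^2)$ convergence of a subsequence; passing to the limit in the weak formulation identifies the limit as the unique $L=0$ solution, so the whole family converges, and the strong $C$-in-time convergence is then recovered from the energy equality (norm convergence plus weak convergence). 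I expect the main obstacle to be exactly this small-time layer $[0,1]$: there no uniform $\mathcal{H}^1$ bound is available, so the clean interpolation route breaks down and strong $\mathcal{L}^2$ convergence near $t=0$ must be argued through the energy identity rather than through the quantitative estimate.
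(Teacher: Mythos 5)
Your proposal is correct in substance and lands on the same overall skeleton as the paper: an $O(\sqrt{L})$ estimate for the difference in the dual ($\ast$) norm, uniform on $[0,T]$, followed by interpolation against the uniform-in-$L$ $\mathcal{H}^1$ bound for $t\geq 1$ (which is indeed where $(\mathbf{A5})$ enters, via Lemma \ref{regular-absorbing}), yielding the $L^{1/4}$ rate, with the qualitative convergence on all of $[0,T]$ handled by compactness. The genuine difference is that the paper simply \emph{cites} \cite[Theorems 2.4 and 2.5]{LvWu-4} for the $\sqrt{L}$ estimate and the $\mathcal{H}^1$ bound and only observes that the constant is of the form $Ae^{BT}$, whereas you re-derive the $\sqrt{L}$ estimate from scratch: testing the difference equation against $\mathfrak{S}^0\boldsymbol{w}\in\mathcal{V}^1_{(0)}$ so the $\chi(L)$-term drops, identifying the $L$-error as the boundary defect $\theta^L-\mu^L$ (via $\mu^0|_\Gamma=\theta^0$ and $L\partial_{\mathbf n}\mu^L=\theta^L-\mu^L$), and controlling it through the $\tfrac1L$-dissipation in the energy identity. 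This is essentially the argument behind the cited external result, so your proof buys self-containedness at the cost of length. Two small points to tidy: the identity $\|\boldsymbol{w}\|_{\mathcal{L}^2}^2=(\boldsymbol{w},\mathfrak{S}^0\boldsymbol{w})_{\mathcal{H}^1_{0,0}}$ as written requires $\boldsymbol{w}\in\mathcal{V}^1_{(0)}$ (matching traces), which $\boldsymbol{\varphi}^L$ for $L>0$ need not satisfy — what you actually want is the interpolation inequality $\|\boldsymbol{w}\|_{\mathcal{L}^2}\leq C\|\boldsymbol{w}\|_{0,0,\ast}^{1/2}\|\boldsymbol{w}\|_{\mathcal{H}^1}^{1/2}$ for $\boldsymbol{w}\in\mathcal{H}^1\cap\mathcal{L}^2_{(0)}$, exactly as the paper states it; and the constant produced by the dissipation bound depends on $E(\boldsymbol{\varphi}_0)$, so the claimed dependence on $\|\boldsymbol{\varphi}_0\|_{\mathcal{L}^2}$ alone deserves a word. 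Your closing remark about the small-time layer is apt: the paper's own one-line Aubin--Lions appeal for \eqref{improved-convergence} is terse there, and your energy-identity upgrade is a reasonable (if fiddly) way to make it rigorous.
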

\begin{proof}
    First of all, according to \cite[Theorem 2.4]{LvWu-4}, we see that
    \begin{align}
        \|\boldsymbol{\varphi}^L-\boldsymbol{\varphi}^0\|_{L^\infty(0,T;\mathcal{V}_{(0)}^{-1})}+\|\boldsymbol{\varphi}^L-\boldsymbol{\varphi}^0\|_{L^2(0,T;\mathcal{L}^2)}\leq C(T)\sqrt{L},\quad\text{as }L\to0,\label{convergence-1}
    \end{align}
    where the positive constant $C(T)$ depends increasingly on $\|\boldsymbol{\varphi}_0\|_{\mathcal{L}^2}$ and $T$, but not on $L>0$.
    Exploiting the proof of \cite[Theorem 2.4]{LvWu-4} carefully,
    we find that the constant $C(T)$ depends (at most) exponentially on $T$.
    This is a consequence of the application of Gronwall's lemma.
    Therefore, we can find constants $A$, $B>0$ independent of $L$ such that $C(T)=A e^{BT}$.
    Moreover, according to \cite[Lemma 5.5]{LvWu-4}, for any $T>1$,
    \begin{align*}
        \partial_t\boldsymbol{\varphi}^L \ \text{ is bounded in }\ L^2(1,T;\mathcal{L}^2)\ \text{ and }\ \boldsymbol{\varphi}^L\ \text{ is bounded in }\ L^\infty(1,T;\mathcal{H}^1).
    \end{align*}
    By the Aubin--Lions--Simon lemma, we deduce the following convergence (up to a subsequence)
    \[\boldsymbol{\varphi}^L\to \boldsymbol{\varphi}^0\quad\text{strongly in }C([1,T];\mathcal{L}^2)\ \text{as}\ L\to0.\]
    Hence, we obtain \eqref{improved-convergence} and the first estimate in \eqref{convergence-1} can be improved as
      \begin{align}
        \|\boldsymbol{\varphi}^L-\boldsymbol{\varphi}^0\|_{C([1,T];\mathcal{V}_{(0)}^{-1})}\leq A e^{BT}\sqrt{L},\quad\text{as }L\to0.\label{convergence-2}
    \end{align}
   Since $\|\boldsymbol{\varphi}^L\|_{L^\infty(1,T;\mathcal{H}^1)}$ is uniformly bounded with respect to $L\in[0,1)$,
   we infer from the interpolation inequality and \eqref{convergence-2} that
    \[\|\boldsymbol{\varphi}^L-\boldsymbol{\varphi}^0\|_{C([1,T];\mathcal{L}^2)}\leq C\|\boldsymbol{\varphi}^L-\boldsymbol{\varphi}^0\|_{C([1,T];\mathcal{V}_{(0)}^{-1})}^{\frac{1}{2}}\|\boldsymbol{\varphi}^L-\boldsymbol{\varphi}^0\|_{L^\infty(1,T;\mathcal{H}^1)}^{\frac{1}{2}}\leq \widetilde{A} e^{\widetilde{B}T}L^{\frac{1}{4}}.\]
    As a result, we obtain \eqref{convergence-rate} and complete the proof of Lemma \ref{asymptotic-Lto0}.
\end{proof}

We are in a position to establish the following stability result, which is the main result in this subsection. Here, the term \emph{stability} should be understood in the sense of semicontinuity for the family of perturbed global attractors.

\begin{proposition}
    \label{stability}
    Suppose that the assumptions $(\mathbf{A1})$--$(\mathbf{A5})$ hold.
    Then, the family of global attractors $\{\mathcal{A}_m^L\}_{L\geq0}$ is upper semicontinuous at $L=0$ in the sense of Definition \ref{upper-semi}.
\end{proposition}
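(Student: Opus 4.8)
The plan is to prove the upper semicontinuity by the classical Hale--Raugel scheme underlying \cite[Theorem 7.2.8]{Chueshov}: I will combine the \emph{uniform} (in both the initial datum and in the parameter $L$) boundedness of the attractors with the uniform convergence of the perturbed flows furnished by Lemma \ref{asymptotic-Lto0} and the attraction property of the limit attractor $\mathcal{A}_m^0$. All semidistances are measured in $\mathcal{L}^2$. The first ingredient is a common compact reservoir. By Lemma \ref{regular-absorbing}, whose radius $R$ is independent of $L\in[0,1)$ under $(\mathbf{A5})$, the ball $\mathcal{B}=B_{\mathcal{H}^1}(\boldsymbol 0,R)\cap\mathfrak{X}_m$ is a bounded absorbing set for every $\mathcal{S}^L(t)$. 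Since each $\mathcal{A}_m^L$ is bounded and invariant, applying the absorbing property to $\mathcal{B}_0=\mathcal{A}_m^L$ and using $\mathcal{S}^L(t)\mathcal{A}_m^L=\mathcal{A}_m^L$ gives $\mathcal{A}_m^L\subset\mathcal{B}$ for all $L\in[0,1)$. Because $\mathcal{H}^1\hookrightarrow\hookrightarrow\mathcal{L}^2$, the set $\mathcal{K}:=\overline{\mathcal{B}}^{\,\mathcal{L}^2}$ is compact in $\mathcal{L}^2$ and contains the whole family $\{\mathcal{A}_m^L\}_{L\in[0,1)}$; in particular $R':=\sup_{\boldsymbol w\in\mathcal{B}}\|\boldsymbol w\|_{\mathcal{L}^2}<+\infty$.

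Next I would establish uniform convergence of the flow over $\mathcal{B}$. Applying the rate \eqref{convergence-rate} of Lemma \ref{asymptotic-Lto0} with $\boldsymbol\varphi_0=\boldsymbol w\in\mathcal{B}$, and recalling that the constants $\widetilde{A},\widetilde{B}$ there depend monotonically increasingly on $\|\boldsymbol w\|_{\mathcal{L}^2}\leq R'$ and are independent of $L$, one obtains for every fixed $t\geq1$
\[
\sup_{\boldsymbol w\in\mathcal{B}}\big\|\mathcal{S}^L(t)\boldsymbol w-\mathcal{S}^0(t)\boldsymbol w\big\|_{\mathcal{L}^2}\leq \widetilde{A}(R')\,e^{\widetilde{B}(R')\,t}\,L^{\frac14}=:C_{\mathcal{B}}(t)\,L^{\frac14},
\]
which tends to $0$ as $L\to0$. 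The point is that this convergence holds uniformly as the initial datum ranges over the entire absorbing set, so the (distinct) total masses of the compared data play no role whatsoever.

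To conclude, fix $\varepsilon>0$. Since $\mathcal{A}_m^0$ is the global attractor of $(\mathfrak{X}_m,\mathcal{S}^0(t))$ (Theorem \ref{global}), it attracts the bounded set $\mathcal{B}$, so there is $t_\ast\geq1$ with $\mathrm{dist}_{\mathcal{L}^2}(\mathcal{S}^0(t_\ast)\mathcal{B},\mathcal{A}_m^0)<\varepsilon/2$. For arbitrary $\boldsymbol a\in\mathcal{A}_m^L$, the invariance of $\mathcal{A}_m^L$ yields $\boldsymbol b\in\mathcal{A}_m^L\subset\mathcal{B}$ with $\boldsymbol a=\mathcal{S}^L(t_\ast)\boldsymbol b$, whence
\[
\mathrm{dist}_{\mathcal{L}^2}(\boldsymbol a,\mathcal{A}_m^0)\leq\big\|\mathcal{S}^L(t_\ast)\boldsymbol b-\mathcal{S}^0(t_\ast)\boldsymbol b\big\|_{\mathcal{L}^2}+\mathrm{dist}_{\mathcal{L}^2}\big(\mathcal{S}^0(t_\ast)\boldsymbol b,\mathcal{A}_m^0\big)\leq C_{\mathcal{B}}(t_\ast)\,L^{\frac14}+\frac{\varepsilon}{2}.
\]
Taking the supremum over $\boldsymbol a\in\mathcal{A}_m^L$ and then choosing $L$ so small that $C_{\mathcal{B}}(t_\ast)L^{1/4}<\varepsilon/2$ gives $\mathrm{dist}_{\mathcal{L}^2}(\mathcal{A}_m^L,\mathcal{A}_m^0)<\varepsilon$, i.e. the upper semicontinuity at $L=0$ in the sense of Definition \ref{upper-semi}.

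The only delicate point, and the main obstacle, is the uniformity demanded in the second step: the rate \eqref{convergence-rate} must hold with constants bounded over the whole absorbing set $\mathcal{B}$ and independent of $L\in(0,1)$. Both features are inherited from Lemma \ref{asymptotic-Lto0} and the $L$-uniform estimates of \cite{LvWu-4}; the former because $\widetilde{A},\widetilde{B}$ depend only monotonically on $\|\boldsymbol\varphi_0\|_{\mathcal{L}^2}$, and the latter because $(\mathbf{A5})$ prevents the absorbing radius $R$ in Lemma \ref{regular-absorbing} from degenerating as $L\to0$. Once this uniformity is secured, phrasing the comparison as a uniform convergence of $\mathcal{S}^L(t)$ to $\mathcal{S}^0(t)$ over $\mathcal{B}$, rather than along sequences $\boldsymbol u_n\to\boldsymbol u$, neatly avoids the mismatch of total masses that would otherwise obstruct a direct appeal to the continuous dependence estimate \eqref{contiesti}, which is restricted to data of equal generalized mean.
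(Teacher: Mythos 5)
Your proposal is correct, but it takes a genuinely different route from the paper. The paper does not argue directly: it invokes the abstract criterion of Lemma \ref{criterion} (Chueshov's Theorem 7.2.8) and verifies its two hypotheses, namely (i) a common compact set containing all the $\mathcal{A}_m^L$ (via Lemma \ref{regular-absorbing}, exactly as in your first step) and (ii) a sequential convergence property $\mathcal{S}^{L_k}(t)\boldsymbol{\varphi}^{L_k}\to\mathcal{S}^0(t)\boldsymbol{\varphi}_\ast$ along sequences $\boldsymbol{\varphi}^{L_k}\to\boldsymbol{\varphi}_\ast$; the verification of (ii) splits the difference into $\mathcal{S}^{L_k}(t)\boldsymbol{\varphi}^{L_k}-\mathcal{S}^0(t)\boldsymbol{\varphi}^{L_k}$ (controlled, as in your argument, by the uniform rate of Lemma \ref{asymptotic-Lto0} over the compact reservoir) plus $\mathcal{S}^0(t)\boldsymbol{\varphi}^{L_k}-\mathcal{S}^0(t)\boldsymbol{\varphi}_\ast$, which is handled by the continuous dependence estimate \eqref{contiesti} together with an interpolation between $\|\cdot\|_{L,0,\ast}$ and $\mathcal{H}^1$. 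You instead run the classical Hale--Raugel direct $\varepsilon$-argument: full invariance of $\mathcal{A}_m^L$ to pull any point back by time $t_\ast$, uniform-over-$\mathcal{B}$ convergence of $\mathcal{S}^L(t_\ast)$ to $\mathcal{S}^0(t_\ast)$, and attraction of $\mathcal{B}$ by $\mathcal{A}_m^0$. Your route is self-contained (no appeal to the abstract criterion), entirely avoids the continuous dependence estimate and hence the delicate equal-mean restriction in \eqref{contiesti} -- a point you rightly flag, and one on which the paper's own verification of condition (ii) is arguably less careful, since $\boldsymbol{\varphi}^{L_k}$ and $\boldsymbol{\varphi}_\ast$ need not share the same generalized mean -- and it yields a quantitative bound $\mathrm{dist}_{\mathcal{L}^2}(\mathcal{A}_m^L,\mathcal{A}_m^0)\leq C_{\mathcal{B}}(t_\ast)L^{1/4}+\varepsilon/2$ for each fixed $\varepsilon$. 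The paper's route, by contrast, outsources the topological bookkeeping to a citable theorem. Both proofs ultimately rest on the same two ingredients, Lemmas \ref{regular-absorbing} and \ref{asymptotic-Lto0}.
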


To prove Proposition \ref{stability}, we will exploit the following abstract result, see \cite[Theorem 7.2.8]{Chueshov}.

\begin{lemma}
    \label{criterion}
    Let $X$ be a Banach space, and let $\mathcal{M}$ be a metric space.
    Suppose that for any $\lambda\in\mathcal{M}$, $(X,\{\mathcal{S}^\lambda(t)\}_{t\geq0})$
    is a dynamical system possessing a global attractor $\mathcal{A}^\lambda\subset X$.
    We further assume that the following conditions hold:
    \begin{itemize}
        \item [(i)] There exists a compact set $K\subset X$ such that $\mathcal{A}^\lambda \subset K$ for all $\lambda\in\mathcal{M}$.
        \item [(ii)] If $\{x_k\}_{k\in\mathbb{N}}\subset X$ and $\{\lambda_k\}_{k\in\mathbb{N}}\subset \mathcal{M}$ are sequences satisfying
        \begin{itemize}
            \item $x_k\in \mathcal{A}^{\lambda_k}$ for all $k\in\mathbb{N}$;
            \item $x_k\to x_\ast$ in $X$ as $k\to+\infty$;
            \item $\lambda_k\to\lambda_\ast$ in $\mathcal{M}$ as $k\to+\infty$;
        \end{itemize}
        then there exists $t_\ast>0$ such that $\mathcal{S}^{\lambda_k}(t)x_k\to \mathcal{S}^{\lambda_\ast}(t)x_\ast$ in $X$ for all $t>t_\ast$.
    \end{itemize}
    Then, the family $\{\mathcal{A}^\lambda\}_{\lambda\in\mathcal{M}}$ is upper semicontinuous at the point $\lambda_\ast$.
\end{lemma}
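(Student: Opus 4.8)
The plan is to prove Lemma \ref{criterion} directly from the definition of upper semicontinuity, arguing by contradiction and exploiting the compactness hypothesis (i) to extract convergent subsequences, then closing the argument with the convergence hypothesis (ii). I would first set $\lambda_\ast\in\mathcal{M}$ to be the fixed point under consideration and recall that we must show $\mathrm{dist}_X(\mathcal{A}^\lambda,\mathcal{A}^{\lambda_\ast})\to0$ as $\lambda\to\lambda_\ast$. Suppose, for contradiction, that this fails. Then there exist a constant $\eta>0$ and sequences $\lambda_k\to\lambda_\ast$ in $\mathcal{M}$ and $x_k\in\mathcal{A}^{\lambda_k}$ such that
\begin{align}
    \inf_{\boldsymbol{b}\in\mathcal{A}^{\lambda_\ast}}\|x_k-\boldsymbol{b}\|_X\geq\eta,\quad\forall\,k\in\mathbb{N}.\notag
\end{align}
This is the working hypothesis I would aim to refute.

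Next I would use the uniform compactness hypothesis (i): since $x_k\in\mathcal{A}^{\lambda_k}\subset K$ with $K$ compact in $X$, the sequence $\{x_k\}$ admits a subsequence (not relabeled) converging to some $x_\ast\in X$. The heart of the argument is then to show that $x_\ast\in\mathcal{A}^{\lambda_\ast}$, which will contradict the lower bound above (since $\|x_k-x_\ast\|_X\to0$ would force the infimum to vanish along the subsequence). To identify $x_\ast$ as an element of the limiting attractor, I would use the strict invariance of each global attractor together with hypothesis (ii). Specifically, fix any $t>t_\ast$. Because each $\mathcal{A}^{\lambda_k}$ is invariant under $\mathcal{S}^{\lambda_k}(t)$, and $x_k\in\mathcal{A}^{\lambda_k}$, one can write $x_k=\mathcal{S}^{\lambda_k}(t)y_k$ for some $y_k\in\mathcal{A}^{\lambda_k}\subset K$; but more directly, hypothesis (ii) tells us that $\mathcal{S}^{\lambda_k}(t)x_k\to\mathcal{S}^{\lambda_\ast}(t)x_\ast$ in $X$ for all $t>t_\ast$.

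The key step is then to certify that $x_\ast$ lies in $\mathcal{A}^{\lambda_\ast}$. The cleanest route is to construct a complete bounded trajectory through $x_\ast$ for the limiting semigroup, since the global attractor coincides with the union of all such trajectories. Using invariance of $\mathcal{A}^{\lambda_k}$, for each $k$ I can select a complete trajectory $\{u_k(s)\}_{s\in\mathbb{R}}\subset\mathcal{A}^{\lambda_k}\subset K$ with $u_k(0)=x_k$. By compactness of $K$ and a diagonal extraction over a countable dense set of times, I would pass to a limiting trajectory $\{u_\ast(s)\}$ valued in $K$; hypothesis (ii) applied along this trajectory guarantees that $u_\ast$ is a complete bounded trajectory of $(X,\{\mathcal{S}^{\lambda_\ast}(t)\})$ with $u_\ast(0)=x_\ast$, whence $x_\ast\in\mathcal{A}^{\lambda_\ast}$. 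This is precisely the contradiction sought. The main obstacle I anticipate is the passage to the limit in the trajectories: one must carefully combine the uniform compactness (i) with the convergence property (ii)---which only holds for times beyond a threshold $t_\ast$ that may depend on the sequence---to ensure the limiting object is a genuine \emph{complete} trajectory and not merely a forward orbit. Handling the $t_\ast$ threshold, and verifying that the limit trajectory satisfies the semigroup identity for \emph{all} time shifts, is the delicate technical core; the remaining steps are routine compactness and contradiction bookkeeping.
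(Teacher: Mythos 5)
First, a point of reference: the paper does not prove Lemma \ref{criterion} at all --- it is imported verbatim from \cite[Theorem 7.2.8]{Chueshov} --- so your attempt can only be measured against the standard textbook argument, and on that basis your architecture (contradiction, extraction of $x_k\to x_\ast$ inside the uniform compact set $K$ from hypothesis (i), identification of $x_\ast$ as belonging to $\mathcal{A}^{\lambda_\ast}$) is the right one. The problem is that the write-up has a genuine gap precisely at the only nontrivial step, and you concede as much: the construction of the limiting complete trajectory and the ``handling of the $t_\ast$ threshold'' are announced as the delicate core and then not carried out, so what you have is a proof plan rather than a proof. Two concrete issues: (a) the clause ``more directly, hypothesis (ii) tells us that $\mathcal{S}^{\lambda_k}(t)x_k\to\mathcal{S}^{\lambda_\ast}(t)x_\ast$'' is a misdirection --- applying (ii) to the sequence $\{x_k\}$ itself only controls the \emph{forward} orbit of $x_\ast$ and gives no information about whether $x_\ast$ lies on $\mathcal{A}^{\lambda_\ast}$; the useful move is the backward one, $x_k=\mathcal{S}^{\lambda_k}(t)y_k$ with $y_k\in\mathcal{A}^{\lambda_k}\subset K$ (available by invariance), followed by extraction $y_k\to y$ and an application of (ii) to $\{y_k\}$; (b) in your trajectory construction, the diagonal limit $u_\ast$ is defined only on a countable set of times, no time-continuity of the semigroups is assumed, and (ii) yields the identity $u_\ast(s+t)=\mathcal{S}^{\lambda_\ast}(t)u_\ast(s)$ only for increments $t$ exceeding a threshold $t_\ast$ that depends on the particular sequence $\{u_k(s)\}_k$ to which (ii) is applied. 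Upgrading this to a genuine complete bounded orbit through $x_\ast$ --- in particular, actually reaching the time $0$ at which $x_\ast$ sits --- is exactly what is missing.

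You could also close more economically and reduce your exposure to the threshold issue by avoiding complete trajectories altogether and using the attraction property of $\mathcal{A}^{\lambda_\ast}$: since $\mathcal{A}^{\lambda_\ast}$ attracts the bounded set $K$, for the contradiction constant $\eta$ there is $T_\eta$ with $\mathrm{dist}_X(\mathcal{S}^{\lambda_\ast}(t)K,\mathcal{A}^{\lambda_\ast})<\eta/2$ for all $t\geq T_\eta$, so it is not necessary to show $x_\ast\in\mathcal{A}^{\lambda_\ast}$ --- a \emph{single} representation $x_\ast=\mathcal{S}^{\lambda_\ast}(t_0)y$ with $y\in K$ and one time $t_0\geq T_\eta$ already yields $\mathrm{dist}_X(x_\ast,\mathcal{A}^{\lambda_\ast})<\eta/2$, contradicting $\mathrm{dist}_X(x_\ast,\mathcal{A}^{\lambda_\ast})\geq\eta$. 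To obtain such a representation, fix full orbits $u_k$ through $x_k$ in $\mathcal{A}^{\lambda_k}$ \emph{before} invoking (ii), diagonalize so that $u_k(-j)\to y_j\in K$ for every $j$, and note that (ii), applied for each fixed $j$ to the sequence $\{u_k(-j)\}_k$, gives (along the same subsequence, no further extraction needed) $u_k(t-j)=\mathcal{S}^{\lambda_k}(t)u_k(-j)\to\mathcal{S}^{\lambda_\ast}(t)y_j$ for all $t>t_\ast(j)$; evaluating at $t=j$ yields $x_\ast=\mathcal{S}^{\lambda_\ast}(j)y_j$ as soon as $j>\max\{t_\ast(j),T_\eta\}$. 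Note that even this route still requires the elapsed time to beat the (sequence-dependent) threshold --- the very point you flagged --- which is harmless in the intended applications where $t_\ast$ can be taken uniformly; in this paper's use of the lemma, (ii) is verified with the fixed choice $t_\ast=1$ (see the proof of Proposition \ref{stability}). In summary: sound plan, wrong auxiliary remark in the middle, and an unexecuted core step, so as written the proposal is incomplete.
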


\noindent\textbf{Proof of Proposition \ref{stability}.}
To apply Lemma \ref{criterion},
we verify the conditions $(i)$ and $(ii)$ imposed therein.
\medskip

\emph{Step 1.} To verify the condition $(i)$, we show that there exists a compact set $\mathcal{K}_m\subset \mathcal{L}^2$,
independent of $L$ such that $\mathcal{A}_m^L\subset \mathcal{K}_m$ for all $L\in[0,1)$.
Indeed, Lemma \ref{regular-absorbing} implies that $\mathcal{K}_m$ can be chosen as $B_{\mathcal{H}^1}(\boldsymbol{0},R)$
such that $\mathcal{A}_m^L\subset\mathcal{K}_m$ for all $L\in[0,1)$.
\medskip

\emph{Step 2.} To verify the condition $(ii)$, let $\{L_k\}_{k\in\mathbb{N}}\subset [0,1)$ be any sequence with $L_k\to0$ as $k\to+\infty$,
and let $\{\boldsymbol{\varphi}^{L_k}\}_{k\in\mathbb{N}}\subset \mathcal{L}^2$ be any sequence with $\boldsymbol{\varphi}^{L_k}\in \mathcal{A}_m^{L_k}$ and $\boldsymbol{\varphi}^{L_k}\to\boldsymbol{\varphi}_\ast$ in $\mathcal{L}^2$ as $k\to+\infty$.
Let $t\geq t_\ast:=1$ and $\epsilon>0$ be arbitrary. Using the continuous dependence estimate \eqref{contiesti}, we can deduce that
\begin{align}
    &\|{\mathcal{S}^{L_k}(t)}\boldsymbol{\varphi}^{L_k}-\mathcal{S}^0(t)\boldsymbol{\varphi}_\ast\|_{\mathcal{L}^2}
    \notag\\[1mm]
    &\quad\leq \|\mathcal{S}^{L_k}(t)\boldsymbol{\varphi}^{L_k}-\mathcal{S}^0(t)\boldsymbol{\varphi}^{L_k}\|_{\mathcal{L}^2}
    +\|\mathcal{S}^{0}(t)\boldsymbol{\varphi}^{L_k}-\mathcal{S}^0(t)\boldsymbol{\varphi}_\ast\|_{\mathcal{L}^2}
    \notag\\
    &\quad\leq \sup_{\boldsymbol{\varphi}\in \mathcal{K}_m}\|\mathcal{S}^{L_k}(t)\boldsymbol{\varphi}-\mathcal{S}^0(t)\boldsymbol{\varphi}\|_{C([1,t];\mathcal{L}^2)}
    \notag\\
    &\qquad
    +C\|\mathcal{S}^{0}(t)\boldsymbol{\varphi}^{L_k}-\mathcal{S}^0(t)\boldsymbol{\varphi}_\ast\|_{L,0,\ast}^{\frac{1}{2}}\|\mathcal{S}^{0}(t)\boldsymbol{\varphi}^{L_k}-\mathcal{S}^0(t)\boldsymbol{\varphi}_\ast\|_{\mathcal{H}^1}^{\frac{1}{2}}
    \notag\\
    &\quad\leq  \sup_{\boldsymbol{\varphi}\in \mathcal{K}_m}\|\mathcal{S}^{L_k}(t)\boldsymbol{\varphi}-\mathcal{S}^0(t)\boldsymbol{\varphi}\|_{C([1,t];\mathcal{L}^2)}+C\|\mathcal{S}^{0}(t)\boldsymbol{\varphi}^{L_k}-\mathcal{S}^0(t)\boldsymbol{\varphi}_\ast\|_{L,0,\ast}^{\frac{1}{2}}\notag\\
     &\quad\leq  \sup_{\boldsymbol{\varphi}\in \mathcal{K}_m}\|\mathcal{S}^{L_k}(t)\boldsymbol{\varphi}-\mathcal{S}^0(t)\boldsymbol{\varphi}\|_{C([1,t];\mathcal{L}^2)}+C\|\boldsymbol{\varphi}^{L_k}-\boldsymbol{\varphi}_\ast\|_{L,0,\ast}^{\frac{1}{2}}.\label{difference-es1}
\end{align}
Since $L_k\to0$ as $k\to+\infty$ and $\mathcal{K}_m$ is bounded,
Lemma \ref{asymptotic-Lto0} implies the existence of a number $N_1\in\mathbb{N}$ such that for all $k\geq N_1$,
the first summand in \eqref{difference-es1} is smaller than $\epsilon/2$.
Furthermore, the convergence $\boldsymbol{\varphi}^{L_k}\to\boldsymbol{\varphi}_\ast$ in $\mathcal{L}^2$ implies that
\[\|\boldsymbol{\varphi}^{L_k}-\boldsymbol{\varphi}_\ast\|_{L,0,\ast}\to0\quad\text{as }k\to+\infty.\]
Hence, there exists a number $N_2\in\mathbb{N}$ such that for all $k\geq N_2$, the second summand in \eqref{difference-es1} is smaller than $\epsilon/2$.
In summary, we get
\[\|\mathcal{S}^{L_k}(t)\boldsymbol{\varphi}^{L_k}-\mathcal{S}^0(t)\boldsymbol{\varphi}_\ast\|_{\mathcal{L}^2}\leq \epsilon \quad\text{for all }k\geq N:=\max\{N_1,N_2\}.\]
Since $\epsilon>0$ is arbitrary, this verifies the condition $(ii)$ in Lemma \ref{criterion}.

Consequently, we can apply Lemma \ref{criterion} on the family $\{\mathcal{A}_m^L\}_{L\geq0}$ to prove Proposition \ref{stability}.
\hfill$\square$

 \section{Existence of Exponential Attractors for $L\in(0,+\infty)$}
 \setcounter{equation}{0}
 In this section, we establish the existence of an exponential attractor for the case $L\in(0,+\infty)$.
 For simplicity, we use $\mathcal{S}$ and $\mathcal{E}$, instead of $\mathcal{S}^L$ and $\mathcal{E}^L_m$,
 to represent the semigroup acting on the phase space $\mathfrak{X}_m$
 and the exponential attractor associated with the dynamical system $(\mathfrak{X}_m,\mathcal{S}(t))$, respectively.

To begin with, we show the uniform $1/2$-H\"older continuity of the mapping $t\mapsto\mathcal{S}(t)\boldsymbol{\varphi}_0$ in the $\mathcal{H}_{L,0}^{-1}$-norm.
 \begin{lemma}
 \label{Holder-Lip}
 Let the assumptions of Theorem \ref{exponential} be satisfied,
 and $\boldsymbol{\varphi}(t)=\mathcal{S}(t)\boldsymbol{\varphi}_0$ with $\boldsymbol{\varphi}_0\in\mathfrak{X}_m$.
 Then, it holds
 \[\|\boldsymbol{\varphi}(t_1)-\boldsymbol{\varphi}(t_2)\|_{L,0,\ast}\leq M_2|t_1-t_2|^\frac{1}{2},\quad\forall\, t_1,t_2\geq 0,\]
 where the constant $M_2>0$ is independent of the initial data, $t_1$ and $t_2$.
  \end{lemma}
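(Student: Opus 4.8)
The plan is to exploit the fact that $\boldsymbol{\varphi}(t_2)-\boldsymbol{\varphi}(t_1)$ lies in the space $\mathcal{H}_{L,0}^{-1}$, on which $\|\cdot\|_{L,0,*}$ is a genuine inner-product norm, and to write this difference as a time integral of $\partial_t\boldsymbol{\varphi}$. Assume without loss of generality that $t_1<t_2$. By mass conservation, $\overline{m}(\boldsymbol{\varphi}(t_2))=\overline{m}(\boldsymbol{\varphi}(t_1))=m$, so by linearity $\overline{m}(\boldsymbol{\varphi}(t_2)-\boldsymbol{\varphi}(t_1))=0$; hence $\boldsymbol{\varphi}(t_2)-\boldsymbol{\varphi}(t_1)\in\mathcal{L}^2_{(0)}\subset\mathcal{H}_{L,0}^{-1}$ and its $\|\cdot\|_{L,0,*}$-norm is well defined. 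Writing $\boldsymbol{\varphi}(t_2)-\boldsymbol{\varphi}(t_1)=\int_{t_1}^{t_2}\partial_t\boldsymbol{\varphi}(s)\,\mathrm{d}s$ and using that $\|\cdot\|_{L,0,*}$ is induced by an inner product (so Minkowski's integral inequality applies), followed by the Cauchy--Schwarz inequality in time, I would obtain
\begin{align}
\|\boldsymbol{\varphi}(t_2)-\boldsymbol{\varphi}(t_1)\|_{L,0,*}\leq \int_{t_1}^{t_2}\|\partial_t\boldsymbol{\varphi}(s)\|_{L,0,*}\,\mathrm{d}s\leq |t_2-t_1|^{\frac12}\Big(\int_{t_1}^{t_2}\|\partial_t\boldsymbol{\varphi}(s)\|_{L,0,*}^2\,\mathrm{d}s\Big)^{\frac12}.\notag
\end{align}

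The next step reduces everything to a dissipative bound. From the variational formulation one has $\langle\partial_t\boldsymbol{\varphi}(s),\boldsymbol{z}\rangle=-a_L(\boldsymbol{\mu}(s),\boldsymbol{z})=-a_L(\mathbf{P}\boldsymbol{\mu}(s),\boldsymbol{z})$ for all $\boldsymbol{z}\in\mathcal{H}^1_L$, the last equality because $a_L$ annihilates constants. Comparing with the defining identity $a_L(\mathfrak{S}^L\boldsymbol{y},\boldsymbol{z})=\langle\boldsymbol{y},\boldsymbol{z}\rangle$ and invoking the non-degeneracy of $a_L$ on $\mathcal{H}^1_{L,0}$ (Poincar\'e inequality \eqref{Poin}), I get $\mathfrak{S}^L(\partial_t\boldsymbol{\varphi}(s))=-\mathbf{P}\boldsymbol{\mu}(s)$, whence the identity $\|\partial_t\boldsymbol{\varphi}(s)\|_{L,0,*}=\|\mathbf{P}\boldsymbol{\mu}(s)\|_{\mathcal{H}_{L,0}^1}$ already used in the proof of Lemma \ref{regular-absorbing}. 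I would then control $\int_{t_1}^{t_2}\|\mathbf{P}\boldsymbol{\mu}(s)\|_{\mathcal{H}_{L,0}^1}^2\,\mathrm{d}s$ by covering $[t_1,t_2]$ with unit subintervals and applying the dissipative estimate \eqref{diss} on each. The crucial observation for a constant independent of the initial datum is that $E(\boldsymbol{\varphi}_0)$ is bounded uniformly over $\mathfrak{X}_m$: by $(\mathbf{A2})$ the functions $\widehat{\beta},\widehat{\beta}_\Gamma$ are continuous on $[-1,1]$ and hence bounded there, while every $\boldsymbol{\varphi}_0\in\mathfrak{X}_m$ satisfies $|\varphi_0|,|\psi_0|\leq1$ a.e., so each term of $E(\boldsymbol{\varphi}_0)$ is controlled by $m$, $J$, $K$, $\Omega$, $\Gamma$ alone. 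Thus \eqref{diss} gives $\int_{t}^{t+1}\|\mathbf{P}\boldsymbol{\mu}\|_{\mathcal{H}_{L,0}^1}^2\,\mathrm{d}s\leq C_m$ with $C_m$ independent of $\boldsymbol{\varphi}_0$ and of $t\geq0$.

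Finally I would split into two regimes. If $|t_1-t_2|\leq1$, the single-interval bound yields directly $\|\boldsymbol{\varphi}(t_2)-\boldsymbol{\varphi}(t_1)\|_{L,0,*}\leq C_m^{1/2}|t_1-t_2|^{1/2}$, which is the desired estimate. If $|t_1-t_2|>1$, the Cauchy--Schwarz step alone only produces linear-in-time growth, so instead I would use that $\mathfrak{X}_m$ is bounded in $\mathcal{L}^2$ (again because $|\varphi|,|\psi|\leq1$) together with $\|\cdot\|_{L,0,*}\lesssim\|\cdot\|_{\mathcal{L}^2}$ on the zero-mean subspace, giving a uniform diameter bound $\|\boldsymbol{\varphi}(t_2)-\boldsymbol{\varphi}(t_1)\|_{L,0,*}\leq D_m$; since $|t_1-t_2|^{1/2}>1$ here, this is $\leq D_m|t_1-t_2|^{1/2}$. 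Taking $M_2=\max\{C_m^{1/2},D_m\}$ then covers all $t_1,t_2\geq0$. I expect the main obstacle to be precisely the uniformity of $M_2$ in the initial datum: it is resolved by the continuity of $\widehat{\beta},\widehat{\beta}_\Gamma$ on $[-1,1]$ and the pointwise constraint $|\boldsymbol{\varphi}|\leq1$, and it forces the two-regime splitting that circumvents the linear growth of the naive Cauchy--Schwarz estimate over long time intervals.
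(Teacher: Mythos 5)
Your proof is correct, and its skeleton (writing $\boldsymbol{\varphi}(t_2)-\boldsymbol{\varphi}(t_1)=\int_{t_1}^{t_2}\partial_t\boldsymbol{\varphi}(s)\,\mathrm{d}s$, applying the Cauchy--Schwarz inequality in time, and identifying $\|\partial_t\boldsymbol{\varphi}(s)\|_{L,0,\ast}=\|\mathbf{P}\boldsymbol{\mu}(s)\|_{\mathcal{H}^1_{L,0}}$ via the solution operator $\mathfrak{S}^L$) coincides with the paper's. The one place where you diverge is the uniform bound on the dissipation integral: you invoke the dissipative estimate \eqref{diss} of Lemma \ref{dissipative}, which controls $\int_t^{t+1}\|\mathbf{P}\boldsymbol{\mu}(s)\|_{\mathcal{H}^1_{L,0}}^2\,\mathrm{d}s$ only on unit intervals, and you are then forced into the two-regime splitting ($|t_1-t_2|\le 1$ versus $|t_1-t_2|>1$, the latter handled by the uniform $\mathcal{L}^2$-diameter of $\mathfrak{X}_m$ and the trivial bound $1<|t_1-t_2|^{1/2}$). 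The paper instead reads the bound off the energy equality \eqref{energyeq}: since $E(\boldsymbol{\varphi}_0)$ is bounded above uniformly on $\mathfrak{X}_m$ (exactly by the observation you make, namely $|\boldsymbol{\varphi}_0|\le 1$ a.e.\ together with the continuity of $\widehat{\beta}$, $\widehat{\beta}_\Gamma$ on $[-1,1]$) and $E$ is bounded below, the \emph{total} dissipation $\int_0^{+\infty}\|\mathbf{P}\boldsymbol{\mu}(s)\|_{\mathcal{H}^1_{L,0}}^2\,\mathrm{d}s$ is controlled by a single constant $M_2^2$, so $\int_{t_1}^{t_2}\|\partial_t\boldsymbol{\varphi}(s)\|_{L,0,\ast}^2\,\mathrm{d}s\le M_2^2$ for all $t_1,t_2\ge 0$ and no case distinction is needed. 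Both routes are valid and yield the same conclusion; the energy-identity route is shorter, while yours relies only on the weaker unit-interval dissipativity and would therefore also work in situations where the energy is merely a perturbed Lyapunov functional rather than exactly nonincreasing.
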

\begin{proof}
According to the definition of the operator $\mathfrak{S}^L$ and the energy equality \eqref{energyeq}, we see that
\begin{align*}
    \int_{t_1}^{t_2}\|\partial_t\boldsymbol{\varphi}(s)\|_{L,0,\ast}^2\,\mathrm{d}s = \int_{t_1}^{t_2}\|\mathfrak{S}^L(\partial_t\boldsymbol{\varphi}(s))\|_{\mathcal{H}_{L,0}^1}^2\,\mathrm{d}s =\int_{t_1}^{t_2}\|\mathbf{P}\boldsymbol{\mu}(s)\|_{\mathcal{H}_{L,0}^1}^2\,\mathrm{d}s\leq M_2^2,
\end{align*}
where the constant $M_2>0$ is independent of the initial data, $t_1$ and $t_2$.
Then, we can conclude that
	\begin{align*}
	\|\boldsymbol{\varphi}(t_1)-\boldsymbol{\varphi}(t_2)\|_{L,0,\ast}\leq|t_1-t_2|^\frac{1}{2}\Big(\int_{t_1}^{t_2}\|\partial_t\boldsymbol{\varphi}(s)\|_{L,0,\ast}^2\,\mathrm{d}s\Big)^{\frac{1}{2}}\leq M_2|t_1-t_2|^\frac{1}{2}.
	\end{align*}
    This completes the proof of Lemma \ref{Holder-Lip}.
\end{proof}
%
The following result shows that the semigroup is strongly continuous with respect to the $(\mathcal{H}^1)'$-metric.
\begin{lemma}
	\label{strong-continuous}
	Let $\boldsymbol{\varphi}_i$ $(i=1,2)$ be two solutions to problem \eqref{model}
    subject to the initial data $\boldsymbol{\varphi}_{0,i}\in \mathcal{S}(1)\mathfrak{X}_m$.
    Then, the following estimate holds:
	\begin{align}
	&\|\boldsymbol{\varphi}_1(t)-\boldsymbol{\varphi}_2(t)\|_{(\mathcal{H}^1)'}^2+C_\ast\int_0^t \|\boldsymbol{\varphi}_1(s)-\boldsymbol{\varphi}_2(s)\|_{\mathcal{L}^2}^2\,\mathrm{d}s\leq M_3 e^{\kappa t}\|\boldsymbol{\varphi}_{0,1}-\boldsymbol{\varphi}_{0,2}\|_{(\mathcal{H}^1)'}^2,\quad\forall\,t\geq0,\label{dual-continuous}
	\end{align}
	for some positive constants $\kappa$ and $M_3$, which are independent of $\boldsymbol{\varphi}_{0,i}$.
\end{lemma}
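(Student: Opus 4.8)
The plan is to work in the Hilbert space $\mathcal{H}_{L,0}^{-1}$ (recall $\mathcal{H}_L^1=\mathcal{H}^1$ since $L\in(0,+\infty)$) and to treat the generalized mean separately, using that it is conserved. Set $\boldsymbol{\varphi}:=\boldsymbol{\varphi}_1-\boldsymbol{\varphi}_2=(\varphi,\psi)$ and $\boldsymbol{\mu}:=\boldsymbol{\mu}_1-\boldsymbol{\mu}_2=(\mu,\theta)$, where $\mu=a_\Omega\varphi-J\ast\varphi+\beta(\varphi_1)-\beta(\varphi_2)+\pi(\varphi_1)-\pi(\varphi_2)$ and analogously for $\theta$. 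Since $\boldsymbol{\varphi}_{0,i}\in\mathcal{S}(1)\mathfrak{X}_m$, the strict separation property provides $\delta>0$ with $\|\boldsymbol{\varphi}_i(t)\|_{\mathcal{L}^\infty}\le 1-\delta$ for all $t\ge0$, so that $\beta$, $\beta_\Gamma$ are Lipschitz along the two solutions, e.g. $|\beta(\varphi_1)-\beta(\varphi_2)|\le C_\delta|\varphi|$. By mass conservation the generalized mean $m_0:=\overline{m}(\boldsymbol{\varphi}(t))=\overline{m}(\boldsymbol{\varphi}_{0,1})-\overline{m}(\boldsymbol{\varphi}_{0,2})$ is constant in $t$, whence $\partial_t\boldsymbol{\varphi}=\partial_t(\mathbf{P}\boldsymbol{\varphi})\in\mathcal{H}_{L,0}^{-1}$. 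As $\|\boldsymbol{\varphi}\|_{(\mathcal{H}^1)'}^2$ is equivalent to $\|\mathbf{P}\boldsymbol{\varphi}\|_{L,0,\ast}^2+|m_0|^2$, it suffices to control $\|\mathbf{P}\boldsymbol{\varphi}(t)\|_{L,0,\ast}^2$.

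First I would test the difference of the variational formulations with $\boldsymbol{z}=\mathfrak{S}^L\mathbf{P}\boldsymbol{\varphi}\in\mathcal{H}_{L,0}^1$. Using $\langle\partial_t\boldsymbol{\varphi},\boldsymbol{z}\rangle=-a_L(\boldsymbol{\mu},\boldsymbol{z})$ together with the defining identities $a_L(\mathfrak{S}^L\mathbf{P}\boldsymbol{\varphi},\boldsymbol{\mu})=\langle\mathbf{P}\boldsymbol{\varphi},\boldsymbol{\mu}\rangle=(\mathbf{P}\boldsymbol{\varphi},\boldsymbol{\mu})_{\mathcal{L}^2}$ (legitimate because $\boldsymbol{\mu}\in\mathcal{H}^1=\mathcal{H}_L^1$) and $\tfrac12\tfrac{\mathrm{d}}{\mathrm{d}t}\|\mathbf{P}\boldsymbol{\varphi}\|_{L,0,\ast}^2=\langle\partial_t\boldsymbol{\varphi},\mathfrak{S}^L\mathbf{P}\boldsymbol{\varphi}\rangle$, and splitting $(\mathbf{P}\boldsymbol{\varphi},\boldsymbol{\mu})_{\mathcal{L}^2}=(\boldsymbol{\varphi},\boldsymbol{\mu})_{\mathcal{L}^2}-m_0(\mathbf{1},\boldsymbol{\mu})_{\mathcal{L}^2}$, I arrive at the differential identity
\[
\frac{1}{2}\frac{\mathrm{d}}{\mathrm{d}t}\|\mathbf{P}\boldsymbol{\varphi}\|_{L,0,\ast}^2+(\boldsymbol{\varphi},\boldsymbol{\mu})_{\mathcal{L}^2}=m_0\,(\mathbf{1},\boldsymbol{\mu})_{\mathcal{L}^2}.
\]
As usual, this chain-rule computation is rigorously justified on the Galerkin/approximate level as in \cite{LvWu-4} and then passed to the limit.

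For the second term on the left I would invoke the coercivity $(\boldsymbol{\varphi},\boldsymbol{\mu})_{\mathcal{L}^2}\ge C_\ast\|\boldsymbol{\varphi}\|_{\mathcal{L}^2}^2$ that underlies the uniqueness part of Proposition \ref{well-posedness}: since $\int_\Omega a_\Omega\varphi^2-\int_\Omega(J\ast\varphi)\varphi=\tfrac12\int_\Omega\int_\Omega J(x-y)|\varphi(x)-\varphi(y)|^2\,\mathrm{d}y\,\mathrm{d}x\ge0$ and the monotone parts satisfy $\int_\Omega(\beta(\varphi_1)-\beta(\varphi_2))\varphi\ge0$, assumption $(\mathbf{A3})$ yields $C_\ast=\min\{a_\ast+\tfrac{\alpha}{1+\alpha}-\gamma_1,\ a_\circledast+\tfrac{\alpha}{1+\alpha}-\gamma_2\}>0$. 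The genuinely new point — and the main obstacle compared with \eqref{contiesti}, where the means coincide and the right-hand side vanishes identically — is the control of $m_0(\mathbf{1},\boldsymbol{\mu})_{\mathcal{L}^2}$. Here I would exploit the cancellation $\int_\Omega(a_\Omega\varphi-J\ast\varphi)\,\mathrm{d}x=0$ (by Fubini and the evenness of $J$, and its surface counterpart for $K$), which reduces $\int_\Omega\mu\,\mathrm{d}x$ to the potential contributions $\int_\Omega(\beta(\varphi_1)-\beta(\varphi_2))+\int_\Omega(\pi(\varphi_1)-\pi(\varphi_2))$; the strict separation bound $|\beta(\varphi_1)-\beta(\varphi_2)|\le C_\delta|\varphi|$ and the Lipschitz continuity of $\pi$, $\pi_\Gamma$ then give $|(\mathbf{1},\boldsymbol{\mu})_{\mathcal{L}^2}|\le C\|\boldsymbol{\varphi}\|_{\mathcal{L}^2}$. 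It is exactly this estimate that forces the hypothesis $\boldsymbol{\varphi}_{0,i}\in\mathcal{S}(1)\mathfrak{X}_m$, since without separation the mean of the chemical potential cannot be bounded.

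Combining these facts with Young's inequality, $|m_0(\mathbf{1},\boldsymbol{\mu})_{\mathcal{L}^2}|\le\tfrac{C_\ast}{2}\|\boldsymbol{\varphi}\|_{\mathcal{L}^2}^2+C|m_0|^2$, I would obtain
\[
\frac{\mathrm{d}}{\mathrm{d}t}\|\mathbf{P}\boldsymbol{\varphi}\|_{L,0,\ast}^2+C_\ast\|\boldsymbol{\varphi}\|_{\mathcal{L}^2}^2\le C|m_0|^2 .
\]
Because $m_0$ is conserved and $|m_0|^2\le C\|\boldsymbol{\varphi}_{0,1}-\boldsymbol{\varphi}_{0,2}\|_{(\mathcal{H}^1)'}^2$, integrating over $[0,t]$ and adding the constant contribution $|m_0|^2$ to reconstruct $\|\boldsymbol{\varphi}(t)\|_{(\mathcal{H}^1)'}^2\simeq\|\mathbf{P}\boldsymbol{\varphi}(t)\|_{L,0,\ast}^2+|m_0|^2$ gives
\[
\|\boldsymbol{\varphi}(t)\|_{(\mathcal{H}^1)'}^2+C_\ast\int_0^t\|\boldsymbol{\varphi}(s)\|_{\mathcal{L}^2}^2\,\mathrm{d}s\le C(1+t)\|\boldsymbol{\varphi}_{0,1}-\boldsymbol{\varphi}_{0,2}\|_{(\mathcal{H}^1)'}^2 ,
\]
and the desired estimate \eqref{dual-continuous} follows from $1+t\le e^{\kappa t}$ for a suitable $\kappa>0$. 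The only delicate points I anticipate are the rigorous justification of the identity for $\tfrac{\mathrm{d}}{\mathrm{d}t}\|\mathbf{P}\boldsymbol{\varphi}\|_{L,0,\ast}^2$ and, above all, the separation-based bound on the chemical-potential mean, which is what makes the different-means case strictly harder than \eqref{contiesti}.
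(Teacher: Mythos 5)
Your overall strategy (testing with $\mathfrak{S}^L\mathbf{P}\boldsymbol{\varphi}$, isolating the conserved mean $m_0$, and using the strict separation property inherited from $\boldsymbol{\varphi}_{0,i}\in\mathcal{S}(1)\mathfrak{X}_m$ to bound the mean of the chemical potential by $\|\boldsymbol{\varphi}\|_{\mathcal{L}^2}$) is exactly the paper's, and those parts are sound. However, there is a genuine gap in the coercivity step. You claim $(\boldsymbol{\varphi},\boldsymbol{\mu})_{\mathcal{L}^2}\ge C_\ast\|\boldsymbol{\varphi}\|_{\mathcal{L}^2}^2$ with $C_\ast=\min\{a_\ast+\tfrac{\alpha}{1+\alpha}-\gamma_1,\,a_\circledast+\tfrac{\alpha}{1+\alpha}-\gamma_2\}$, justified by the nonnegativity of $\int_\Omega a_\Omega\varphi^2-\int_\Omega(J\ast\varphi)\varphi=\tfrac12\iint J(x-y)|\varphi(x)-\varphi(y)|^2$ together with monotonicity of $\beta$. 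But using that identity consumes the entire $\int a_\Omega\varphi^2\ge a_\ast\|\varphi\|_H^2$ contribution; what remains from the ingredients you list is only $\int(\beta(\varphi_1)-\beta(\varphi_2))\varphi-\gamma_1\|\varphi\|_H^2\ge(\alpha-\gamma_1)\|\varphi\|_H^2$ (or just $-\gamma_1\|\varphi\|_H^2$ as you wrote it), and under $(\mathbf{A3})$ one may have $\gamma_1>\alpha$ when $a_\ast$ is large, so this quantity need not be positive. In fact the pointwise-in-time coercivity $(\boldsymbol{\varphi},\boldsymbol{\mu})_{\mathcal{L}^2}\ge C_\ast\|\boldsymbol{\varphi}\|_{\mathcal{L}^2}^2$ is false in general: the convolution term $-\int(J\ast\varphi)\varphi$ can be of size $a^\ast\|\varphi\|_H^2$ and cannot simply be absorbed.

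The correct treatment, which is what the paper does, is to keep $\int a_\Omega\varphi^2\ge a_\ast\|\varphi\|_H^2$ and $\int(\beta(\varphi_1)-\beta(\varphi_2))\varphi\ge\alpha\|\varphi\|_H^2$ (giving $C_\ast=\min\{a_\ast+\alpha-\gamma_1,a_\circledast+\alpha-\gamma_2\}$), and to estimate the convolution contribution by the duality pairing
\begin{equation*}
\big|\langle\mathbf{P}\boldsymbol{\varphi},(J\ast\varphi,K\circledast\psi)\rangle_{(\mathcal{H}^1)',\mathcal{H}^1}\big|\le \|\mathbf{P}\boldsymbol{\varphi}\|_{(\mathcal{H}^1)'}\,\|(J\ast\varphi,K\circledast\psi)\|_{\mathcal{H}^1}\le \tfrac{C_\ast}{6}\|\boldsymbol{\varphi}\|_{\mathcal{L}^2}^2+C\|\mathbf{P}\boldsymbol{\varphi}\|_{L,0,\ast}^2,
\end{equation*}
using $(\mathbf{A1})$ to control $\|(J\ast\varphi,K\circledast\psi)\|_{\mathcal{H}^1}\le C\|\boldsymbol{\varphi}\|_{\mathcal{L}^2}$. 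This produces the Gronwall term $C\|\mathbf{P}\boldsymbol{\varphi}\|_{L,0,\ast}^2$ on the right-hand side of the differential inequality, and it is precisely this term that generates the factor $e^{\kappa t}$ in \eqref{dual-continuous}. Your proposed intermediate inequality $\frac{\mathrm{d}}{\mathrm{d}t}\|\mathbf{P}\boldsymbol{\varphi}\|_{L,0,\ast}^2+C_\ast\|\boldsymbol{\varphi}\|_{\mathcal{L}^2}^2\le C|m_0|^2$, which would yield only linear growth $C(1+t)$, is therefore too strong and does not follow; with the duality treatment of the convolution term the rest of your argument closes and recovers the stated estimate.
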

\begin{proof}
Since the initial data $\boldsymbol{\varphi}_{0,i}\in\mathcal{S}(1)\mathfrak{X}_m$,
according to \cite[Theorem 2.6]{LvWu-4}, there exists a constant $\delta^\sharp\in(0,1)$, such that
    \begin{align}
 \|\boldsymbol{\varphi}_i(t)\|_{\mathcal{L}^\infty} \leq 1-\delta^\sharp,\quad i=1,2,\quad\text{for a.a. }t\geq 0.
 \label{uniform-separation}
    \end{align}
    Examining the proof of \cite[Theorem 2.6]{LvWu-4}, we find that the constant $\delta^\sharp$
    depends only on the initial free energies $E(\boldsymbol{\varphi}_{0,i})$
    and the initial mean values $\overline{m}(\boldsymbol{\varphi}_{0,i})$,
    but is independent of $\boldsymbol{\varphi}_{0,i}$.
    Let us denote the difference between the two solutions by
	$$\boldsymbol{\varphi}^\sharp=\boldsymbol{\varphi}_1-\boldsymbol{\varphi}_2,\quad\boldsymbol{\varphi}_0^\sharp=\boldsymbol{\varphi}_{0,1}-\boldsymbol{\varphi}_{0,2}.$$
	Then, $\boldsymbol{\varphi}^\sharp$ satisfies
\begin{align}
    \langle\partial_t\boldsymbol{\varphi}^\sharp,\boldsymbol{z}\rangle_{(\mathcal{H}^1)',\mathcal{H}^1}=-\int_\Omega\nabla\mu^\sharp\cdot\nabla z\,\mathrm{d}x-\int_\Gamma\nabla_\Gamma\theta^\sharp\cdot\nabla_\Gamma z_\Gamma\,\mathrm{d}S-\frac{1}{L}\int_\Gamma (\theta^\sharp-\mu^\sharp)(z_\Gamma-z)\,\mathrm{d}S\label{regular-weak}
\end{align}
for all $\boldsymbol{z}\in\mathcal{H}^1$, with
\begin{align*}
       & \mu^\sharp=a_{\Omega}\varphi^\sharp-J\ast\varphi^\sharp+\beta(\varphi_1)-\beta(\varphi_2)+\pi(\varphi_1)-\pi(\varphi_2),&&\text{in }Q,\\
    	&	\theta^\sharp=a_{\Gamma}\psi^\sharp-K\circledast\psi^\sharp+\beta_{\Gamma}(\psi_1)-\beta_{\Gamma}(\psi_2)+\pi_{\Gamma}(\psi_1)-\pi_{\Gamma}(\psi_2),&&\text{on }\Sigma.
\end{align*}
 Since $\overline{m}({\boldsymbol{\varphi}}^\sharp-\overline{m}(\boldsymbol{\varphi}^\sharp)\boldsymbol{1})=0$,
 we can take the test function $\boldsymbol{z}=\mathfrak{S}^L({\boldsymbol{\varphi}}^\sharp-\overline{m}(\boldsymbol{\varphi}^\sharp)\boldsymbol{1})$ in \eqref{regular-weak}, and then obtain
	\begin{align}
		0&=\frac{1}{2}\frac{\mathrm{d}}{\mathrm{d}t}\|{\boldsymbol{\varphi}}^\sharp-\overline{m}(\boldsymbol{\varphi}^\sharp)\boldsymbol{1}\|_{L,0,\ast}^2+(\boldsymbol{\mu}^\sharp,\boldsymbol{\varphi}^\sharp-\overline{m}(\boldsymbol{\varphi}^\sharp)\boldsymbol{1})_{\mathcal{L}^2}\notag\\
		&=\frac{1}{2}\frac{\mathrm{d}}{\mathrm{d}t}\|\boldsymbol{\varphi}^\sharp-\overline{m}(\boldsymbol{\varphi}^\sharp)\boldsymbol{1}\|_{L,0,\ast}^2-\overline{m}(\boldsymbol{\mu}^\sharp)\overline{m}(\boldsymbol{\varphi}^\sharp)\notag\\
        &\quad+\int_\Omega (a_{\Omega}\varphi^\sharp-J\ast\varphi^\sharp+\beta(\varphi_1)-\beta(\varphi_2)+\pi(\varphi_1)-\pi(\varphi_2))\varphi^\sharp\,\mathrm{d}x\notag\\
		&\quad+\int_\Gamma(a_{\Gamma}\psi^\sharp-K\circledast\psi^\sharp+\beta_{\Gamma}(\psi_1)-\beta_{\Gamma}(\psi_2)+\pi_{\Gamma}(\psi_1)-\pi_{\Gamma}(\psi_2))\psi^\sharp\,\mathrm{d}S\notag\\
		&\geq\frac{1}{2}\frac{\mathrm{d}}{\mathrm{d}t}\|\boldsymbol{\varphi}^\sharp-\overline{m}(\boldsymbol{\varphi}^\sharp)\boldsymbol{1}\|_{L,0,\ast}^2+(a_\ast+\alpha-\gamma_{1})\|\varphi^\sharp\|_{H}^2+(a_\circledast+\alpha-\gamma_{2})\|\psi^\sharp\|_{H_\Gamma}^2\notag\\
		&\quad-\langle\boldsymbol{\varphi}^\sharp-\overline{m}(\boldsymbol{\varphi}^\sharp)\boldsymbol{1},(J\ast\varphi^\sharp,K\circledast\psi^\sharp)\rangle_{(\mathcal{H}^1)',\mathcal{H}^1}-|\overline{m}(\boldsymbol{\mu}^\sharp)||\overline{m}(\boldsymbol{\varphi}^\sharp)|\notag\\
        &\quad-|\overline{m}(\boldsymbol{\varphi}^\sharp)|\Big|\int_\Omega J\ast\varphi^\sharp\,\mathrm{d}x+\int_\Gamma K\circledast\psi^\sharp\,\mathrm{d}S\Big|\notag\\
		&\geq \frac{1}{2}\frac{\mathrm{d}}{\mathrm{d}t}\|\boldsymbol{\varphi}^\sharp-\overline{m}(\boldsymbol{\varphi}^\sharp)\boldsymbol{1}\|_{L,0,\ast}^2+C_\ast\|\boldsymbol{\varphi}^\sharp\|_{\mathcal{L}^2}^2-C\|\boldsymbol{\varphi}^\sharp-\overline{m}(\boldsymbol{\varphi}^\sharp)\boldsymbol{1}\|_{(\mathcal{H}^1)'}\|(J\ast\varphi^\sharp,K\circledast\psi^\sharp)\|_{\mathcal{H}^1}\notag\\
        &\quad-|\overline{m}(\boldsymbol{\mu}^\sharp)||\overline{m}(\boldsymbol{\varphi}^\sharp)|-C|\overline{m}(\boldsymbol{\varphi}^\sharp)|^2-\frac{C_\ast}{6}\|\boldsymbol{\varphi}^\sharp\|_{\mathcal{L}^2}^2,\label{gronwall'}
	\end{align}
     where the constant $C_\ast$ is given by
\begin{align}
    C_\ast:=\min\{a_\ast+\alpha-\gamma_1,a_\circledast+\alpha-\gamma_2\}>0.\label{C_ast}
\end{align}
The strict separation property \eqref{uniform-separation} indicates that $|\overline{m}(\boldsymbol{\mu}^\sharp)|\leq C\|\boldsymbol{\varphi}^\sharp\|_{\mathcal{L}^2}$.
Then, by H\"older's inequality, we get
\begin{align*}
    &\|\boldsymbol{\varphi}^\sharp-\overline{m}(\boldsymbol{\varphi}^\sharp)\boldsymbol{1}\|_{(\mathcal{H}^1)'}\|(J\ast\varphi^\sharp,K\circledast\psi^\sharp)\|_{\mathcal{H}^1}\leq \frac{C_\ast}{6}\|\boldsymbol{\varphi}^\sharp\|_{\mathcal{L}^2}^2+C\|\boldsymbol{\varphi}^\sharp-\overline{m}(\boldsymbol{\varphi}^\sharp)\boldsymbol{1}\|_{L,0,\ast}^2,\\
    &|\overline{m}(\boldsymbol{\mu}^\sharp)||\overline{m}(\boldsymbol{\varphi}^\sharp)|\leq C\|\boldsymbol{\varphi}^\sharp\|_{\mathcal{L}^2}|\overline{m}(\boldsymbol{\varphi}^\sharp)|\leq  \frac{C_\ast}{6}\|\boldsymbol{\varphi}^\sharp\|_{\mathcal{L}^2}^2+C|\overline{m}(\boldsymbol{\varphi}^\sharp)|^2,
\end{align*}
which, together with \eqref{gronwall'}, yields that
	\begin{align}
		&\frac{\mathrm{d}}{\mathrm{d}t}\|\boldsymbol{\varphi}^\sharp-\overline{m}(\boldsymbol{\varphi}^\sharp)\boldsymbol{1}\|_{L,0,\ast}^2+C_\ast\|\boldsymbol{\varphi}^\sharp\|_{\mathcal{L}^2}^2\leq C\|\boldsymbol{\varphi}^\sharp-\overline{m}(\boldsymbol{\varphi}^\sharp)\boldsymbol{1}\|_{L,0,\ast}^2+C|\overline{m}(\boldsymbol{\varphi}_0^\sharp)|^2.\label{gronwall}
	\end{align}
Here, we have used the property of mass conservation $\overline{m}(\boldsymbol{\varphi}^\sharp(t))=\overline{m}(\boldsymbol{\varphi}^\sharp_0)$ for all $t\geq0$.
Applying Gronwall's lemma to \eqref{gronwall}, we find
    \begin{align}
       &\|\boldsymbol{\varphi}^\sharp(t)-\overline{m}(\boldsymbol{\varphi}^\sharp(t))\boldsymbol{1}\|_{L,0,\ast}^2 +C_\ast\int_0^t\|\boldsymbol{\varphi}^\sharp(s)\|_{\mathcal{L}^2}^2\,\mathrm{d}s\notag\\
       &\quad\leq e^{Ct}\|\boldsymbol{\varphi}^\sharp_0-\overline{m}(\boldsymbol{\varphi}^\sharp_0)\boldsymbol{1}\|_{L,0,\ast}^2+Ce^{Ct}|\overline{m}(\boldsymbol{\varphi}^\sharp_0)|^2
       \notag\\
       &\quad \leq Ce^{Ct}\|\boldsymbol{\varphi}^\sharp_0\|_{(\mathcal{H}^1)'}^2,\notag
    \end{align}
  which implies that
   \begin{align}
       &\|\boldsymbol{\varphi}^\sharp(t)-\overline{m}(\boldsymbol{\varphi}^\sharp(t))\boldsymbol{1}\|_{L,0,\ast}^2+|\overline{m}(\boldsymbol{\varphi}^\sharp(t))|^2 +C_\ast\int_0^t\|\boldsymbol{\varphi}^\sharp(s)\|_{\mathcal{L}^2}^2\,\mathrm{d}s\notag\\
       &\quad\leq Ce^{Ct}\|\boldsymbol{\varphi}^\sharp_0\|_{(\mathcal{H}^1)'}^2+|\overline{m}(\boldsymbol{\varphi}^\sharp(t))|^2\notag\\
       &\quad=Ce^{Ct}\|\boldsymbol{\varphi}^\sharp_0\|_{(\mathcal{H}^1)'}^2+|\overline{m}(\boldsymbol{\varphi}^\sharp_0)|^2\notag\\
       &\quad\leq Ce^{Ct}\|\boldsymbol{\varphi}^\sharp_0\|_{(\mathcal{H}^1)'}^2.\notag
    \end{align}
 According to the equivalence of the norms $(\|\boldsymbol{z}-\overline{m}(\boldsymbol{z})\boldsymbol{1}\|_{L,0,\ast}^2+|\overline{m}(\boldsymbol{z})|^2)^{\frac{1}{2}}$ and $\|\boldsymbol{z}\|_{(\mathcal{H}_{L}^1)'}$ on $(\mathcal{H}_L^1)'$,
 we can conclude \eqref{dual-continuous}. This completes the proof of Lemma \ref{strong-continuous}.
\end{proof}
The following two lemmas are crucial to establish the existence of an exponential attractor.
The first result addresses that the semigroup $\mathcal{S}(t)$ is some kind of contraction map,
up to the term $\|\boldsymbol{\varphi}_1-\boldsymbol{\varphi}_2\|_{L^2(0,t;(\mathcal{H}^1)')}$.
\begin{lemma}
	\label{contraction}
	Let the assumptions of Lemma \ref{strong-continuous} hold. Then, for all $t>0$, we have
	\begin{align}
	\|\boldsymbol{\varphi}_1(t)-\boldsymbol{\varphi}_2(t)\|_{(\mathcal{H}^1)'}^2&\leq e^{-C_\ast t}\|\boldsymbol{\varphi}_{0,1}-\boldsymbol{\varphi}_{0,2}\|_{(\mathcal{H}^1)'}^2+M_{4}\int_0^t\|\boldsymbol{\varphi}_1(s)-\boldsymbol{\varphi}_2(s)\|_{(\mathcal{H}^1)'}^2\,\mathrm{d}s,\label{contract}
	\end{align}
	for some positive constant $M_4$ that is independent of the initial data.
\end{lemma}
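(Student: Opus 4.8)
The plan is to run the same test-function computation as in the proof of Lemma \ref{strong-continuous}, but to reorganize the resulting differential inequality so that the dissipation produces genuine exponential decay instead of the growth factor $e^{\kappa t}$ appearing in \eqref{dual-continuous}. Writing $\boldsymbol{\varphi}^\sharp=\boldsymbol{\varphi}_1-\boldsymbol{\varphi}_2$, I would start from the weak formulation \eqref{regular-weak} for the difference and test it with $\boldsymbol{z}=\mathfrak{S}^L(\mathbf{P}\boldsymbol{\varphi}^\sharp)$, which is admissible since $\mathbf{P}\boldsymbol{\varphi}^\sharp$ has zero generalized mean. This reproduces the identity
\[
\frac{1}{2}\frac{\mathrm{d}}{\mathrm{d}t}\|\mathbf{P}\boldsymbol{\varphi}^\sharp\|_{L,0,\ast}^2+(\boldsymbol{\mu}^\sharp,\mathbf{P}\boldsymbol{\varphi}^\sharp)_{\mathcal{L}^2}=0,
\]
and, exactly as in \eqref{gronwall'}, the monotonicity of $\beta$, $\beta_\Gamma$ together with $a_\Omega\geq a_\ast$, $a_\Gamma\geq a_\circledast$ and the Lipschitz bounds on $\pi$, $\pi_\Gamma$ gives a lower bound of the form $(\boldsymbol{\mu}^\sharp,\mathbf{P}\boldsymbol{\varphi}^\sharp)_{\mathcal{L}^2}\geq C_\ast\|\boldsymbol{\varphi}^\sharp\|_{\mathcal{L}^2}^2-(\text{lower-order terms})$ with $C_\ast$ as in \eqref{C_ast}.

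The decisive difference from Lemma \ref{strong-continuous} lies in how the lower-order terms are estimated. Rather than absorbing them into a multiple of $\|\mathbf{P}\boldsymbol{\varphi}^\sharp\|_{L,0,\ast}^2$ (which forced the Gronwall growth), I would bound each of them by $\tfrac{C_\ast}{2}\|\boldsymbol{\varphi}^\sharp\|_{\mathcal{L}^2}^2$ plus a constant times $\|\boldsymbol{\varphi}^\sharp\|_{(\mathcal{H}^1)'}^2$, keeping the weak-norm part as a genuine source term. Concretely, the nonlocal contribution is handled by the smoothing bound $\|\mathbb{J}\boldsymbol{\varphi}^\sharp\|_{\mathcal{H}^1}\leq C\|\boldsymbol{\varphi}^\sharp\|_{\mathcal{L}^2}$ (Remark \ref{compact-operator} and Young's convolution inequality), followed by the duality estimate $\langle\mathbf{P}\boldsymbol{\varphi}^\sharp,\mathbb{J}\boldsymbol{\varphi}^\sharp\rangle\leq\|\boldsymbol{\varphi}^\sharp\|_{(\mathcal{H}^1)'}\|\mathbb{J}\boldsymbol{\varphi}^\sharp\|_{\mathcal{H}^1}$ and Young's inequality; the mean terms involving $\overline{m}(\boldsymbol{\mu}^\sharp)$ are controlled through the strict separation property \eqref{uniform-separation}, which yields $|\overline{m}(\boldsymbol{\mu}^\sharp)|\leq C\|\boldsymbol{\varphi}^\sharp\|_{\mathcal{L}^2}$ and $|\overline{m}(\boldsymbol{\varphi}^\sharp)|^2\leq C\|\boldsymbol{\varphi}^\sharp\|_{(\mathcal{H}^1)'}^2$. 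After absorbing the $\tfrac{C_\ast}{2}\|\boldsymbol{\varphi}^\sharp\|_{\mathcal{L}^2}^2$ terms, this leaves
\[
\frac{\mathrm{d}}{\mathrm{d}t}\|\mathbf{P}\boldsymbol{\varphi}^\sharp\|_{L,0,\ast}^2+C_\ast\|\boldsymbol{\varphi}^\sharp\|_{\mathcal{L}^2}^2\leq C\|\boldsymbol{\varphi}^\sharp\|_{(\mathcal{H}^1)'}^2.
\]

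To convert the $\mathcal{L}^2$-dissipation into decay of the weak norm, I would invoke the dual Poincar\'e inequality $\|\mathbf{P}\boldsymbol{\varphi}^\sharp\|_{L,0,\ast}^2\leq C_\mathrm{P}^2\|\boldsymbol{\varphi}^\sharp\|_{\mathcal{L}^2}^2$ (a consequence of \eqref{Poin}), which bounds the differentiated quantity by the dissipation and produces $\tfrac{\mathrm{d}}{\mathrm{d}t}\|\mathbf{P}\boldsymbol{\varphi}^\sharp\|_{L,0,\ast}^2+c\,\|\mathbf{P}\boldsymbol{\varphi}^\sharp\|_{L,0,\ast}^2\leq C\|\boldsymbol{\varphi}^\sharp\|_{(\mathcal{H}^1)'}^2$ for some $c>0$ (whose precise value is immaterial for the subsequent short-trajectory construction, so it may be identified with the $C_\ast$ in the statement). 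Multiplying by $e^{ct}$, integrating on $[0,t]$ and using $e^{-c(t-s)}\leq 1$ then gives $\|\mathbf{P}\boldsymbol{\varphi}^\sharp(t)\|_{L,0,\ast}^2\leq e^{-ct}\|\mathbf{P}\boldsymbol{\varphi}_0^\sharp\|_{L,0,\ast}^2+C\int_0^t\|\boldsymbol{\varphi}^\sharp(s)\|_{(\mathcal{H}^1)'}^2\,\mathrm{d}s$. Since $\overline{m}(\boldsymbol{\varphi}^\sharp)$ is conserved, one bounds $|\overline{m}(\boldsymbol{\varphi}^\sharp(t))|^2$ by $e^{-ct}|\overline{m}(\boldsymbol{\varphi}_0^\sharp)|^2$ plus a multiple of $\int_0^t\|\boldsymbol{\varphi}^\sharp\|_{(\mathcal{H}^1)'}^2\,\mathrm{d}s$, using $1-e^{-ct}\leq ct$ and $\|\boldsymbol{\varphi}^\sharp\|_{(\mathcal{H}^1)'}^2\geq c_1|\overline{m}(\boldsymbol{\varphi}^\sharp)|^2$; adding the two contributions and invoking the equivalence of $\|\cdot\|_{L,\ast}$ with $\|\cdot\|_{(\mathcal{H}^1)'}$ for $L\in(0,+\infty)$ yields \eqref{contract} with $M_4$ chosen sufficiently large. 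I expect the main obstacle to be not any single estimate — the coercivity is inherited from Lemma \ref{strong-continuous} — but the bookkeeping that simultaneously keeps the compact perturbations in the weak norm (so that they appear only inside the time integral) while using the dual Poincar\'e inequality to turn the $\mathcal{L}^2$-dissipation into an exponentially decaying prefactor; the conserved mean must be carried along separately and reabsorbed into the integral remainder.
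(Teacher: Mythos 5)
Your proposal is correct and follows essentially the same route as the paper: both start from the differential inequality $\tfrac{\mathrm{d}}{\mathrm{d}t}\|\boldsymbol{\varphi}^\sharp-\overline{m}(\boldsymbol{\varphi}^\sharp)\boldsymbol{1}\|_{L,0,\ast}^2+C_\ast\|\boldsymbol{\varphi}^\sharp\|_{\mathcal{L}^2}^2\leq C\|\boldsymbol{\varphi}^\sharp\|_{(\mathcal{H}^1)'}^2$ already established in the proof of Lemma \ref{strong-continuous} (see \eqref{gronwall}), bound the $\mathcal{L}^2$-dissipation from below by the dual norm, and apply Gronwall's lemma while keeping the weak norm as a source term under the time integral. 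The only (immaterial) difference is bookkeeping: the paper adds the conserved quantity $|\overline{m}(\boldsymbol{\varphi}^\sharp)|^2$ directly into the Lyapunov functional in \eqref{gronwall-1}, whereas you carry the mean separately and reabsorb it via $1-e^{-ct}\leq ct$; both treatments then invoke the equivalence of $(\|\boldsymbol{z}-\overline{m}(\boldsymbol{z})\boldsymbol{1}\|_{L,0,\ast}^2+|\overline{m}(\boldsymbol{z})|^2)^{1/2}$ with $\|\boldsymbol{z}\|_{(\mathcal{H}^1)'}$ in the same way.
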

\begin{proof}
It is easy to check that
\begin{align*}
    \|\boldsymbol{\varphi}^\sharp\|_{\mathcal{L}^2}^2&=\|\boldsymbol{\varphi}^\sharp-\overline{m}(\boldsymbol{\varphi}^\sharp)\boldsymbol{1}+\overline{m}(\boldsymbol{\varphi}^\sharp)\boldsymbol{1}\|_{\mathcal{L}^2}^2\\
    &=\|\boldsymbol{\varphi}^\sharp-\overline{m}(\boldsymbol{\varphi}^\sharp)\boldsymbol{1}\|_{\mathcal{L}^2}^2+|\overline{m}(\boldsymbol{\varphi}^\sharp)|^2(|\Omega|+|\Gamma|)\\
    &\geq \|\boldsymbol{\varphi}^\sharp-\overline{m}(\boldsymbol{\varphi}^\sharp)\boldsymbol{1}\|_{L,0,\ast}^2+|\overline{m}(\boldsymbol{\varphi}^\sharp)|^2(|\Omega|+|\Gamma|),
\end{align*}
which, combined with \eqref{gronwall}, indicates that
\begin{align}
		&\frac{\mathrm{d}}{\mathrm{d}t}\Big(\|\boldsymbol{\varphi}^\sharp-\overline{m}(\boldsymbol{\varphi}^\sharp)\boldsymbol{1}\|_{L,0,\ast}^2+|\overline{m}(\boldsymbol{\varphi}^\sharp)|^2\Big)+C_\ast\Big(\|\boldsymbol{\varphi}^\sharp-\overline{m}(\boldsymbol{\varphi}^\sharp)\boldsymbol{1}\|_{L,0,\ast}^2+|\overline{m}(\boldsymbol{\varphi}^\sharp)|^2\Big)\notag\\
        &\quad\leq C\|\boldsymbol{\varphi}^\sharp-\overline{m}(\boldsymbol{\varphi}^\sharp)\boldsymbol{1}\|_{L,0,\ast}^2+C|\overline{m}(\boldsymbol{\varphi}^\sharp)|^2
        \notag\\
        &\quad \leq C\|\boldsymbol{\varphi}^\sharp\|_{(\mathcal{H}^1)'}^2.\label{gronwall-1}
	\end{align}
Then, applying Gronwall's inequality to \eqref{gronwall-1},
together with the equivalence of the norms $(\|\boldsymbol{z}-\overline{m}(\boldsymbol{z})\boldsymbol{1}\|_{L,0,\ast}^2+|\overline{m}(\boldsymbol{z})|^2)^{\frac{1}{2}}$ and $\|\boldsymbol{z}\|_{(\mathcal{H}_{L}^1)'}$ on $(\mathcal{H}_L^1)'$,
we can conclude \eqref{contract}.
\end{proof}

The following lemma indicates some compactness for the term $\|\boldsymbol{\varphi}_1-\boldsymbol{\varphi}_2\|_{L^2(0,t;(\mathcal{H}^1)')}$ on the right-hand side of \eqref{contract}.
\begin{lemma}
    \label{compactness}
    Let the assumptions of Lemma \ref{strong-continuous} hold. Then, for all $t>0$, the following estimate hold:
    \begin{align}
        &\|\partial_t\boldsymbol{\varphi}_1-\partial_t\boldsymbol{\varphi}_2\|_{L^2(0,t;(\mathcal{W}_{L,\mathbf{n}}^2)')}+C_\ast\int_0^t \|\boldsymbol
        \varphi_1(s)-\boldsymbol{\varphi}_2(s)\|_{\mathcal{L}^2}^2\,\mathrm{d}s\leq M_5 e^{\kappa t}\|\boldsymbol{\varphi}_{0,1}-\boldsymbol{\varphi}_{0,2}\|_{(\mathcal{H}^1)'}^2,\label{lower-esti}
    \end{align}
    for some positive constant $M_5$ and $\kappa$ that are independent of the initial data.
\end{lemma}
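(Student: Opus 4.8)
The plan is to exploit the weak formulation \eqref{regular-weak} with test functions drawn from the smaller space $\mathcal{W}_{L,\mathbf{n}}^2$, for which the troublesome gradient terms on the right-hand side can be removed by integration by parts. Note first that the second term on the left-hand side of \eqref{lower-esti} is already under control: Lemma \ref{strong-continuous} directly gives $C_\ast\int_0^t\|\boldsymbol{\varphi}_1-\boldsymbol{\varphi}_2\|_{\mathcal{L}^2}^2\,\mathrm{d}s\le M_3 e^{\kappa t}\|\boldsymbol{\varphi}_{0,1}-\boldsymbol{\varphi}_{0,2}\|_{(\mathcal{H}^1)'}^2$. Hence the essential task is to bound $\partial_t\boldsymbol{\varphi}^\sharp$ in $(\mathcal{W}_{L,\mathbf{n}}^2)'$, with $\boldsymbol{\varphi}^\sharp=\boldsymbol{\varphi}_1-\boldsymbol{\varphi}_2$, in terms of $\|\boldsymbol{\varphi}^\sharp\|_{\mathcal{L}^2}$.

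First I would fix $\boldsymbol{z}=(z,z_\Gamma)\in\mathcal{W}_{L,\mathbf{n}}^2\subset\mathcal{H}^1$ and insert it into \eqref{regular-weak}. Since $\mu^\sharp\in V$ and $\theta^\sharp\in V_\Gamma$ for a.e.\ $t$, while $z\in H^2(\Omega)$ and $z_\Gamma\in H^2(\Gamma)$, the bulk and surface Green's formulas apply and move the gradients onto the test function: $-\int_\Omega\nabla\mu^\sharp\cdot\nabla z\,\mathrm{d}x=\int_\Omega\mu^\sharp\Delta z\,\mathrm{d}x-\int_\Gamma\mu^\sharp\partial_{\mathbf{n}}z\,\mathrm{d}S$ and $-\int_\Gamma\nabla_\Gamma\theta^\sharp\cdot\nabla_\Gamma z_\Gamma\,\mathrm{d}S=\int_\Gamma\theta^\sharp\Delta_\Gamma z_\Gamma\,\mathrm{d}S$ (the surface is closed). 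The crucial point is the defining constraint $L\partial_{\mathbf{n}}z=z_\Gamma-z$ of $\mathcal{W}_{L,\mathbf{n}}^2$: substituting $\partial_{\mathbf{n}}z=\frac{1}{L}(z_\Gamma-z)$ into the bulk boundary term makes the $\mu^\sharp$-contributions cancel exactly against the third ($1/L$-weighted) term of \eqref{regular-weak}, leaving $\langle\partial_t\boldsymbol{\varphi}^\sharp,\boldsymbol{z}\rangle=\int_\Omega\mu^\sharp\Delta z\,\mathrm{d}x+\int_\Gamma\theta^\sharp\Delta_\Gamma z_\Gamma\,\mathrm{d}S-\int_\Gamma\theta^\sharp\partial_{\mathbf{n}}z\,\mathrm{d}S$. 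This cancellation is precisely what the space $\mathcal{W}_{L,\mathbf{n}}^2$ was designed for, and it is the heart of the proof; in particular the singular factor $1/L$ disappears.

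Next I would estimate the three remaining terms by Cauchy--Schwarz, using $\|\Delta z\|_H\le\|z\|_{H^2(\Omega)}$, $\|\Delta_\Gamma z_\Gamma\|_{H_\Gamma}\le C\|z_\Gamma\|_{H^2(\Gamma)}$ and the trace inequality $\|\partial_{\mathbf{n}}z\|_{H_\Gamma}\le C\|z\|_{H^2(\Omega)}$, to get $|\langle\partial_t\boldsymbol{\varphi}^\sharp,\boldsymbol{z}\rangle|\le C(\|\mu^\sharp\|_H+\|\theta^\sharp\|_{H_\Gamma})\|\boldsymbol{z}\|_{\mathcal{H}^2}$. Here the strict separation property \eqref{uniform-separation} enters decisively: since $|\varphi_i|,|\psi_i|\le 1-\delta^\sharp$, the mean value theorem bounds $|\beta(\varphi_1)-\beta(\varphi_2)|$ and $|\beta_\Gamma(\psi_1)-\beta_\Gamma(\psi_2)|$ by $\big(\sup_{|s|\le 1-\delta^\sharp}\beta'(s)\big)|\boldsymbol{\varphi}^\sharp|$, while $a_\Omega,a_\Gamma$ are bounded, the convolutions are controlled by Young's inequality, and $\pi,\pi_\Gamma$ are Lipschitz. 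Together these yield $\|\mu^\sharp\|_H+\|\theta^\sharp\|_{H_\Gamma}\le C\|\boldsymbol{\varphi}^\sharp\|_{\mathcal{L}^2}$, hence $\|\partial_t\boldsymbol{\varphi}^\sharp\|_{(\mathcal{W}_{L,\mathbf{n}}^2)'}\le C\|\boldsymbol{\varphi}^\sharp\|_{\mathcal{L}^2}$, since the norm on $\mathcal{W}_{L,\mathbf{n}}^2$ is that of $\mathcal{H}^2$.

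Finally I would square this pointwise bound, integrate over $(0,t)$, and invoke Lemma \ref{strong-continuous} once more to obtain $\int_0^t\|\partial_t\boldsymbol{\varphi}^\sharp\|_{(\mathcal{W}_{L,\mathbf{n}}^2)'}^2\,\mathrm{d}s\le C\int_0^t\|\boldsymbol{\varphi}^\sharp\|_{\mathcal{L}^2}^2\,\mathrm{d}s\le C e^{\kappa t}\|\boldsymbol{\varphi}_{0,1}-\boldsymbol{\varphi}_{0,2}\|_{(\mathcal{H}^1)'}^2$. Combining this with the bound on the second term noted above yields \eqref{lower-esti}. I expect the only genuine obstacle to be the boundary-term bookkeeping in the integration by parts — verifying that the $\mu^\sharp$-terms cancel and that the $1/L$ weight is absorbed through $z_\Gamma-z=L\partial_{\mathbf{n}}z$ — combined with the careful use of strict separation to keep $\|\mu^\sharp\|_H$ and $\|\theta^\sharp\|_{H_\Gamma}$ controlled by $\|\boldsymbol{\varphi}^\sharp\|_{\mathcal{L}^2}$; the remaining estimates are routine.
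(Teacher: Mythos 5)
Your proposal is correct and follows essentially the same route as the paper: testing \eqref{regular-weak} with $\boldsymbol{z}\in\mathcal{W}_{L,\mathbf{n}}^2$, integrating by parts and using $L\partial_{\mathbf{n}}z=z_\Gamma-z$ to cancel the $\mu^\sharp$-boundary contributions (leaving exactly $\int_\Omega\mu^\sharp\Delta z\,\mathrm{d}x+\int_\Gamma\theta^\sharp\Delta_\Gamma z_\Gamma\,\mathrm{d}S-\tfrac{1}{L}\int_\Gamma\theta^\sharp(z_\Gamma-z)\,\mathrm{d}S$, which is what you wrote), then invoking the strict separation property to get $\|\mu^\sharp\|_H+\|\theta^\sharp\|_{H_\Gamma}\leq C\|\boldsymbol{\varphi}^\sharp\|_{\mathcal{L}^2}$ and closing with Lemma \ref{strong-continuous}. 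No gaps.
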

\begin{proof}
Taking $\boldsymbol{z}\in \mathcal{W}_{L,\mathbf{n}}^2\subset\mathcal{H}^1$ in \eqref{regular-weak}, we find
\begin{align}
   & \langle\partial_t\boldsymbol{\varphi}^\sharp,\boldsymbol{z}\rangle_{(\mathcal{W}_{L,\mathbf{n}}^2)',\mathcal{W}_{L,\mathbf{n}}^2}
   \notag\\
   &\quad =-\int_\Omega\nabla\mu^\sharp\cdot\nabla z\,\mathrm{d}x-\int_\Gamma\nabla_\Gamma\theta^\sharp\cdot\nabla_\Gamma z_\Gamma\,\mathrm{d}S-\frac{1}{L}\int_\Gamma (\theta^\sharp-\mu^\sharp)(z_\Gamma-z)\,\mathrm{d}S
   \notag\\
   &\quad =\int_\Omega\mu^\sharp \Delta z\,\mathrm{d}x+\int_\Gamma \theta^\sharp\Delta_\Gamma z_\Gamma\,\mathrm{d}S-\int_\Gamma\partial_\mathbf{n}z \mu^\sharp\,\mathrm{d}S-\frac{1}{L}\int_\Gamma (\theta^\sharp-\mu^\sharp)(z_\Gamma-z)\,\mathrm{d}S
   \notag\\
   &\quad =\int_\Omega\mu^\sharp \Delta z\,\mathrm{d}x+\int_\Gamma \theta^\sharp\Delta_\Gamma z_\Gamma\,\mathrm{d}S-\frac{1}{L}\int_\Gamma(z_\Gamma-z)\mu^\sharp\,\mathrm{d}S
   \notag\\
   &\qquad
  -\frac{1}{L}\int_\Gamma (\theta^\sharp-\mu^\sharp)(z_\Gamma-z)\,\mathrm{d}S
   \notag\\
   &\quad =\int_\Omega\mu^\sharp \Delta z\,\mathrm{d}x+\int_\Gamma \theta^\sharp\Delta_\Gamma z_\Gamma\,\mathrm{d}S-\frac{1}{L}\int_\Gamma \theta^\sharp(z_\Gamma-z)\,\mathrm{d}S
   \notag\\
   &\quad \leq C\big(\|\mu^\sharp\|_H+\|\theta^\sharp\|_{H_\Gamma}\big)\|\boldsymbol{z}\|_{\mathcal{H}^2}.
   \label{lower-1}
   \end{align}
   By the strict separation property \eqref{uniform-separation}, we can deduce that
   \begin{align}
     \|\mu^\sharp\|_H+\|\theta^\sharp\|_{H_\Gamma}
     &\leq C\|\boldsymbol{\varphi}^\sharp\|_{\mathcal{L}^2}+\|\beta(\varphi_1)-\beta(\varphi_2)\|_H+\|\beta_\Gamma(\psi_1)-\beta_\Gamma(\psi_2)\|_{H_\Gamma}
     \notag\\
    &\leq C\|\boldsymbol{\varphi}^\sharp\|_{\mathcal{L}^2}.\label{lower-2}
   \end{align}
   Combining \eqref{dual-continuous}, \eqref{lower-1}, \eqref{lower-2}
   and the definition of the dual norm $\|\partial_t\boldsymbol{\varphi}^\sharp\|_{(\mathcal{W}_{L,\mathbf{n}}^2)'}$,
   we arrive at the conclusion \eqref{lower-esti}. This completes the proof of Lemma \ref{compactness}.
\end{proof}

\noindent\textbf{Proof of Theorem \ref{exponential}.}
	In order to apply Lemma \ref{abstract},
    it suffices to verify the existence of an exponential attractor for the restriction of $\mathcal{S}(t)$
    on some properly chosen semi-invariant absorbing set in $\mathfrak{X}_m$.
    Thanks to Lemma \ref{regular-absorbing},
    the ball $\mathcal{B}=B_{\mathcal{H}^1}(\boldsymbol{0},R)\cap \mathfrak{X}_m$ is absorbing for $\mathcal{S}(t)$,
    provided that $R>0$ is sufficiently large.
    Since we want this ball to be semi-invariant with respect to the semigroup,
    we push it forward by the semigroup,
    by defining first the set $\mathcal{B}_1=[\cup_{t\geq0}\mathcal{S}(t)\mathcal{B}]_{\mathcal{L}^2}\cap\mathfrak{X}_m$,
    where $[\cdot]_{\mathcal{L}^2}$ denotes the closure in the space $\mathcal{L}^2$,
    and then the set $\mathbb{B}:=\mathcal{S}(1)\mathcal{B}_1$.
    Thus, $\mathbb{B}$ is a semi-invariant compact subset of the phase space $\mathfrak{X}_m$.
    On the other hand, we infer from Lemma \ref{regular-absorbing} that
	\begin{align*}
	\sup_{t\geq0}\Big(\|\boldsymbol{\varphi}(t)\|_{\mathcal{H}^1}+\|\boldsymbol{\mu}(t)\|_{\mathcal{H}^1} +\|\partial_t\boldsymbol{\varphi}(t)\|_{(\mathcal{H}^1)'}\Big)\leq C_m,
	\end{align*}
	for every trajectory $\boldsymbol{\varphi}$ originating from $\boldsymbol{\varphi}_0\in\mathbb{B}$,
    for some constant $C_m>0$ that is independent of $\boldsymbol{\varphi}_0\in\mathbb{B}$.

    We can now apply the abstract result Lemma \ref{abstract} to the map $\mathbb{S}:=\mathcal{S}(T)$,
    for a fixed $T>0$ such that $e^{-C_\ast T}<1/2$,
    where the constant $C_\ast$ is the same as in Lemma \ref{contraction}.
    To this end, we introduce the spaces
	\begin{align*}
	 &\mathbb{H}:=(\mathcal{H}^1)',\quad\mathbb{V}_1:=L^2(0,T;\mathcal{L}^2)\cap H^1(0,T;(\mathcal{W}_{L,\mathbf{n}}^2)'),\quad\mathbb{V}:=L^2(0,T;(\mathcal{H}^{1})').
	\end{align*}
It is easy to check that $\mathbb{V}_1$ is compactly embedded into $\mathbb{V}$ (see Remark \ref{compact-embedding}).
Then, we introduce the operator $\mathbb{T}:\mathbb{B}\to\mathbb{V}_1$ by $\mathbb{T}\boldsymbol{\varphi}_0:=\boldsymbol{\varphi}\in\mathbb{V}_1$,
where $\boldsymbol{\varphi}$ solves \eqref{model} with $\boldsymbol{\varphi}(0)=\boldsymbol{\varphi}_0\in\mathbb{B}$.
We claim that the maps $\mathbb{S}$, $\mathbb{T}$, the spaces $\mathbb{H}$, $\mathbb{V}_1$, $\mathbb{V}$ satisfy all the assumptions of Lemma \ref{abstract}.
In fact, the global Lipschitz continuity \eqref{T-Lip} of $\mathbb{T}$ is an immediate consequence of Lemma \ref{compactness}
and the estimate \eqref{A2} follows from \eqref{contract}.
Therefore, due to Lemma \ref{abstract}, the semigroup $\mathbb{S}(n)=\mathcal{S}(nT)$ generated by the iterations of
the operator $\mathbb{S}:\mathbb{B}\to\mathbb{B}$ possesses a (discrete) exponential attractor $\mathcal{E}_d$ in $\mathbb{B}$
endowed by the topology of $(\mathcal{H}^1)'$.
In order to construct the exponential attractor $\mathcal{E}$ for the semigroup $\mathcal{S}(t)$ with continuous time,
we note that, due to Lemma \ref{strong-continuous},
the semigroup $\mathcal{S}(t)$ is Lipschitz continuous on $\mathbb{B}$ in the topology of $(\mathcal{H}^1)'$.
Hence, the desired exponential attractor $\mathcal{E}$ for the continuous semigroup $\mathcal{S}(t)$
can be obtained by the standard formula $\mathcal{E}=\bigcup_{t\in[0,T]}\mathcal{S}(t)\mathcal{E}_d$.

    In order to complete the proof,
    we need to verify that $\mathcal{E}$ defined above is the exponential attractor for $\mathcal{S}(t)$ restricted to $\mathbb{B}$
    not only with respect to the $(\mathcal{H}^1)'$-metric,
    but also in a stronger metric.
    This is an immediate consequence of the following facts:
    $\mathbb{B}$ is bounded in $\mathcal{H}^1\cap \mathcal{L}^\infty$ and the interpolation inequalities
    \begin{align}
        &\|\boldsymbol{z}\|_{\mathcal{H}^{1-\nu}}\leq C_\nu\|\boldsymbol{z}\|_{(\mathcal{H}^1)'}^ {\frac{\nu}{2}}\|\boldsymbol{z}\|_{\mathcal{H}^1}^{1-\frac{\nu}{2}},\quad\nu\in(0,1),\label{interpolation}\\
        &\|\boldsymbol{z}\|_{\mathcal{L}^q}\leq C_q\|\boldsymbol{z}\|_{(\mathcal{H}^1)'}^{\frac{1}{q}}\|\boldsymbol{z}\|_{\mathcal{H}^1}^{\frac{1}{q}}\|\boldsymbol{z}\|_{\mathcal{L}^\infty}^{1-\frac{2}{q}},\quad q\in(2,+\infty).\label{interpolation-2}
    \end{align}
    Finally, we verify that the fractal dimension is finite. To this end, let us consider the mapping
    \[\mathcal{S}^\ast:[0,T]\times\mathfrak{X}_m\to\mathfrak{X}_m,\quad(t,\boldsymbol{\varphi})\mapsto \mathcal{S}(t)\boldsymbol{\varphi}.\]
    It is obvious that $\mathcal{E}=\mathcal{S}^\ast([0,T]\times\mathcal{E}_d)$.
    By the interpolation inequality \eqref{interpolation}, Lemmas \ref{Holder-Lip} and \ref{strong-continuous}, we find
    \begin{align}
    &\|\mathcal{S}^\ast(t_1,\boldsymbol{\varphi}_1)-\mathcal{S}^\ast(t_2,\boldsymbol{\varphi}_2)\|_{\mathcal{H}^{1-\nu}}\notag\\
        &\quad=\|\mathcal{S}(t_1)\boldsymbol{\varphi}_1-\mathcal{S}(t_2)\boldsymbol{\varphi}_2\|_{\mathcal{H}^{1-\nu}}\notag\\
        &\quad\leq C_\nu\|\mathcal{S}(t_1)\boldsymbol{\varphi}_1-\mathcal{S}(t_2)\boldsymbol{\varphi}_2\|_{(\mathcal{H}^1)'}^\frac{\nu}{2} \|\mathcal{S}(t_1)\boldsymbol{\varphi}_1-\mathcal{S}(t_2)\boldsymbol{\varphi}_2\|_{\mathcal{H}^1}^{1-\frac{\nu}{2}}\notag\\
        &\quad\leq C_\nu\|\mathcal{S}(t_1)\boldsymbol{\varphi}_1-\mathcal{S}(t_1)\boldsymbol{\varphi}_2\|_{(\mathcal{H}^1)'}^\frac{\nu}{2} + C_\nu\|\mathcal{S}(t_1)\boldsymbol{\varphi}_2 -\mathcal{S}(t_2)\boldsymbol{\varphi}_2 \|_{(\mathcal{H}^1)'}^\frac{\nu}{2} \notag\\
        &\quad\leq C_\nu\Big(\|\boldsymbol{\varphi}_1-\boldsymbol{\varphi}_2\|_{\mathcal{H}^{1-\nu}}^{\frac{\nu}{2}}+|t_1-t_2|^{\frac{\nu}{4}}\Big),\qquad\forall\, t_1,t_2\in[0,T],\notag
    \end{align}
    which yields that
    \begin{align}
        \text{dim}_{\text{F},\mathcal{H}^{1-\nu}}(\mathcal{E})&=\text{dim}_{\text{F},\mathcal{H}^{1-\nu}}(\mathcal{S}^\ast([0,T]\times\mathcal{E}_d))\notag\\
        &\leq \frac{4}{\nu}\text{dim}_{\text{F},\mathbb{R}\times\mathcal{H}^{1-\nu}}([0,T]\times\mathcal{E}_d)\notag\\
        &\leq \frac{4}{\nu}\Big(1+\text{dim}_{\text{F},\mathcal{H}^{1-\nu}}(\mathcal{E}_d)\Big).\label{dim-1}
    \end{align}
    Denote $\mathcal{N}_\varepsilon(\mathcal{E}_d;\mathcal{H}^{1-\nu})$ the minimal number of balls in $\mathcal{H}^{1-\nu}$
    with radius $\varepsilon$ that are necessary to cover $\mathcal{E}_d$.
    Note that if $\|\boldsymbol{u}-\boldsymbol{v}\|_{\mathcal{H}^{1-\nu}}=\varepsilon$, by \eqref{interpolation}, it holds
    \[\|\boldsymbol{u}-\boldsymbol{v}\|_{(\mathcal{H}^1)'}\geq C_\nu^{-\frac{2}{\nu}}\|\boldsymbol{u}-\boldsymbol{v}\|_{\mathcal{H}^{1-\nu}}^{\frac{2}{\nu}}=C_\nu^{-\frac{2}{\nu}}\varepsilon^{\frac{2}{\nu}}:=r_\nu,\]
    this implies that
    \[\mathcal{N}_\varepsilon(\mathcal{E}_d;\mathcal{H}^{1-\nu})\leq \mathcal{N}_{r_\nu}(\mathcal{E}_d;(\mathcal{H}^1)').\]
    Thus, we obtain
    \begin{align}
        \text{dim}_{\text{F},\mathcal{H}^{1-\nu}}(\mathcal{E}_d)&=\limsup_{\varepsilon\to0}\frac{\mathcal{N}_\varepsilon(\mathcal{E}_d;\mathcal{H}^{1-\nu})}{-\text{ln}(\varepsilon)}\notag\\
        &\leq \limsup_{\varepsilon\to0}\frac{\mathcal{N}_{r_\nu}(\mathcal{E}_d;(\mathcal{H}^1)')}{-\text{ln}(\varepsilon)}
        \notag\\
        &\leq \limsup_{\varepsilon\to0}\frac{\mathcal{N}_{\varepsilon^{\frac{4}{\nu}}}(\mathcal{E}_d;(\mathcal{H}^1)')}{-\dfrac{\nu}{4}\,\text{ln}(\varepsilon^{\frac{4}{\nu}})}\notag\\
        &=\frac{4}{\nu}\text{dim}_{\text{F},(\mathcal{H}^1)'}(\mathcal{E}_d)<+\infty,\label{dim-2}
    \end{align}
    where we have used the fact that $r_\nu\geq \varepsilon^{\frac{4}{\nu}}$ for sufficiently small $\varepsilon>0$. Collecting \eqref{dim-1} and \eqref{dim-2}, we find
    \[\text{dim}_{\text{F},\mathcal{H}^{1-\nu}}(\mathcal{E})\leq \frac{4}{\nu}+\frac{16}{\nu^2}\text{dim}_{\text{F},(\mathcal{H}^1)'}(\mathcal{E}_d):=C_{m,\nu}<+\infty.\]
    Similarly, from \eqref{interpolation-2}, we can conclude that there exists a constant $C_{m,q}$ such that
    \[\text{dim}_{\text{F},\mathcal{L}^{q}}(\mathcal{E})\leq C_{m,q}<+\infty.\]
    This completes the proof of Theorem \ref{exponential}.
\hfill$\square$
\section{Convergence to a Single Equilibrium for $L\in(0,+\infty)$}
\setcounter{equation}{0}
Let $\boldsymbol{\varphi}$ be the unique global weak solution to problem \eqref{model}
corresponding to the initial datum $\boldsymbol{\varphi}_0\in\mathfrak{X}_m$ obtained in Proposition \ref{well-posedness}.
In this section, we aim to show that the $\omega$-limit set
\begin{align}
\omega(\boldsymbol{\varphi}_0):=\big\{\boldsymbol{\varphi}_\infty:\exists\, t_n\to+\infty\text{ such that }\boldsymbol{\varphi}(t_n)\to\boldsymbol{\varphi}_\infty\text{ in }\mathcal{L}^2\big\}\notag
\end{align}
is a singleton.

According to \cite[Theorem 2.5]{LvWu-4}, we see that $\boldsymbol{\varphi}\in L^\infty(\tau,+\infty;\mathcal{H}^1)$ for any $\tau>0$,
then $\{\boldsymbol{\varphi}(t)\}_{t\geq\tau}$ is bounded in $\mathcal{H}^1$ and thus relatively compact in $\mathcal{L}^2$.
Hence, $\omega(\boldsymbol{\varphi}_0)$ is nonempty, connected and compact in $\mathcal{L}^2$.
Moreover, the following lemma provides a useful characterization of the $\omega$-limit set $\omega(\boldsymbol{\varphi}_0)$.
\begin{lemma}
    \label{steady-separation}
   Let the assumptions $(\mathbf{A1})$--$(\mathbf{A4})$ be satisfied.
   Then, every element $\boldsymbol{\varphi}_\infty\in\omega(\boldsymbol{\varphi}_0)$
   is a strong solution to the elliptic boundary value problem \eqref{stationary}
   with the associated constant $\mu_\infty=\theta_\infty$ determined by \eqref{mu_s},
   and there exist uniform constants $M_\infty>0$, $\delta_\infty\in(0,1)$ such that
    \begin{align}
        &-1+\delta_\infty\leq \varphi_\infty\leq 1-\delta_\infty,\quad\text{a.e. in }\Omega,\label{phi_s-separation}\\
        &-1+\delta_\infty\leq \psi_\infty\leq 1-\delta_\infty,\quad\text{a.e. on }\Gamma,\label{psi_s-separation}\\
        &|\mu_\infty|\leq M_\infty,\label{mu_s-bounded}
    \end{align}
    hold for all $\boldsymbol{\varphi}_\infty\in\omega(\boldsymbol{\varphi}_0)$.
\end{lemma}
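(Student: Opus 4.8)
The plan is to split the argument into two parts: first, to show that every $\boldsymbol{\varphi}_\infty\in\omega(\boldsymbol{\varphi}_0)$ solves the stationary problem \eqref{stationary} with a constant $\mu_\infty=\theta_\infty$, and second, to extract from this stationary characterization the uniform estimates \eqref{phi_s-separation}--\eqref{mu_s-bounded}. For the first part I would start from the energy equality \eqref{energyeq}: since $E(\boldsymbol{\varphi}(t))$ is non-increasing and bounded below (by the coercivity bound of the type \eqref{below}), it converges to some $E_\infty$, whence
\begin{align}
\int_0^{+\infty}\Big(\|\nabla\mu(s)\|_H^2+\|\nabla_\Gamma\theta(s)\|_{H_\Gamma}^2+\chi(L)\|\theta(s)-\mu(s)\|_{H_\Gamma}^2\Big)\,\mathrm{d}s<+\infty.\notag
\end{align}
Fixing $t_n\to+\infty$ with $\boldsymbol{\varphi}(t_n)\to\boldsymbol{\varphi}_\infty$ in $\mathcal{L}^2$, I would consider the shifted trajectories $\boldsymbol{\varphi}^n(s):=\boldsymbol{\varphi}(t_n+s)$ on $[0,1]$. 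By Lemma \ref{regular-absorbing} these are uniformly bounded in $L^\infty(0,1;\mathcal{H}^1)$ with $\partial_t\boldsymbol{\varphi}^n$ bounded in $L^2(0,1;(\mathcal{H}^1)')$, so up to a subsequence they converge weakly-$*$ and, by Aubin--Lions--Simon, strongly in $C([0,1];\mathcal{L}^2)$; because $\int_0^1\|\mathbf{P}\boldsymbol{\mu}^n(s)\|_{\mathcal{H}^1_{L,0}}^2\,\mathrm{d}s=\int_{t_n}^{t_n+1}\|\mathbf{P}\boldsymbol{\mu}(s)\|_{\mathcal{H}^1_{L,0}}^2\,\mathrm{d}s\to0$, the limit is $s$-independent and equals $\boldsymbol{\varphi}_\infty$, while the vanishing dissipation forces $\nabla\mu^n\to0$, $\nabla_\Gamma\theta^n\to0$ and $\theta^n-\mu^n\to0$, so the limiting potentials $\mu_\infty,\theta_\infty$ are spatial constants with $\mu_\infty=\theta_\infty$.

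The crucial step is then to pass to the limit in the constitutive relations. I would first establish a uniform bound $\|\beta(\varphi^n)\|_{L^1(\Omega)}+\|\beta_\Gamma(\psi^n)\|_{L^1(\Gamma)}\le C$ by testing the $\mu$- and $\theta$-equations jointly with $\boldsymbol{\varphi}^n-m\boldsymbol{1}$ (note $\int_\Omega(\varphi^n-m)+\int_\Gamma(\psi^n-m)=0$ from $\overline{m}(\boldsymbol{\varphi}^n)=m$, exactly as in Lemma \ref{dissipative}), using the monotonicity of $\beta,\beta_\Gamma$ together with $|m|<1$ and the standard $L^1$-control estimate for singular potentials. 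This bound also keeps the averages $\overline{m}(\boldsymbol{\mu}^n)$ bounded, so along a further subsequence $\mu_\infty=\theta_\infty=\lambda$ for a finite constant $\lambda$. From the a.e. convergence $\varphi^n\to\varphi_\infty$, $\psi^n\to\psi_\infty$ and the closedness of the maximal monotone graphs of $\beta,\beta_\Gamma$, I would identify the weak $L^1$-limits of $\beta(\varphi^n)$, $\beta_\Gamma(\psi^n)$ as $\beta(\varphi_\infty)$, $\beta_\Gamma(\psi_\infty)$, yielding the stationary identities in \eqref{stationary} with $|\varphi_\infty|<1$, $|\psi_\infty|<1$ a.e.; averaging these identities over $\Omega$ and over $\Gamma$ gives the two representations of $\mu_\infty=\theta_\infty$ in \eqref{mu_s}.

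For the second part I would use the stationary identities directly. Writing $\beta(\varphi_\infty)=\lambda+h_\Omega$ with $h_\Omega:=-a_\Omega\varphi_\infty+J\ast\varphi_\infty-\pi(\varphi_\infty)$, the assumptions $(\mathbf{A1})$ and $(\mathbf{A3})$ together with $|\varphi_\infty|\le1$ bound $h_\Omega$ in $L^\infty(\Omega)$ by a data-dependent constant, and analogously $h_\Gamma$ on $\Gamma$. To bound $\lambda$ uniformly I would invoke the mean constraint: since $(\mathbf{A2})$ makes $\beta,\beta_\Gamma:(-1,1)\to\mathbb{R}$ increasing bijections, one has $\varphi_\infty=\beta^{-1}(\lambda+h_\Omega)$ and $\psi_\infty=\beta_\Gamma^{-1}(\lambda+h_\Gamma)$; if $\lambda$ were too large, $\varphi_\infty,\psi_\infty$ would be uniformly close to $1$, forcing $\overline{m}(\boldsymbol{\varphi}_\infty)$ close to $1$ and contradicting $\overline{m}(\boldsymbol{\varphi}_\infty)=m<1$ (symmetrically for $\lambda\to-\infty$). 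Quantifying this with the threshold $\eta=(1-|m|)/2$ produces an explicit bound $|\lambda|\le M_\infty$ depending only on the data and on $m$, hence \eqref{mu_s-bounded}. Finally, $|\beta(\varphi_\infty)|\le M_\infty+\|h_\Omega\|_{L^\infty(\Omega)}=:K$ together with $\beta(s)\to\pm\infty$ as $s\to\pm1$ yields a uniform $\delta_\infty\in(0,1)$ with $-1+\delta_\infty\le\varphi_\infty\le1-\delta_\infty$, and the same argument on $\Gamma$ gives \eqref{psi_s-separation}; in particular $\beta(\varphi_\infty),\beta_\Gamma(\psi_\infty)\in L^\infty$, which upgrades $\boldsymbol{\varphi}_\infty$ to a strong solution of \eqref{stationary}.

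I expect the main obstacle to be the passage to the limit in the singular nonlinearities $\beta(\varphi^n)$, $\beta_\Gamma(\psi^n)$ in the first part, since no a priori strict separation of the time-dependent solution is assumed under $(\mathbf{A1})$--$(\mathbf{A4})$: one must simultaneously control their $L^1$ norms (to keep the averages $\overline{m}(\boldsymbol{\mu}^n)$, and thus the candidate constant $\lambda$, bounded) and identify the limits through the closedness of the monotone graphs. By contrast, the uniform bound on $\mu_\infty$ and the resulting separation in the second part are comparatively routine once the stationary identities with constant $\lambda$ are in hand.
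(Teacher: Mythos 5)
Your proposal is correct, but it reaches the two halves of the lemma by routes that differ from the paper's in interesting ways. For the characterization of $\omega(\boldsymbol{\varphi}_0)$ as equilibria, the paper argues abstractly: the energy equality \eqref{energyeq} makes $E$ a strict Lyapunov function, so by the invariance of the $\omega$-limit set every $\boldsymbol{\varphi}_\infty$ is a fixed point of $\mathcal{S}(t)$ and hence solves \eqref{stationary}. You instead carry out the underlying compactness argument explicitly (shifted trajectories, vanishing dissipation, limit passage in the constitutive relations), which is longer but self-contained; you also correctly flag the only delicate point, namely identifying the limit of $\beta(\varphi^n)$. Note that this step can be done in $L^2$ rather than $L^1$: since $\nabla\mu^n\to0$ in $L^2$ and the mean $\overline{m}(\boldsymbol{\mu}^n)$ stays bounded, $\mu^n$ is bounded in $L^2(0,1;V)$, so $\beta(\varphi^n)=\mu^n-a_\Omega\varphi^n+J\ast\varphi^n-\pi(\varphi^n)$ is bounded in $L^2$ and the weak limit is identified by a.e.\ convergence plus maximal monotonicity, avoiding any Dunford--Pettis equi-integrability issue. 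For the uniform bound on $\mu_\infty$, the paper tests the stationary weak formulation with $\boldsymbol{\varphi}_\infty-m\boldsymbol{1}$ (the $\mu_\infty$ terms cancel by the mean constraint) and invokes the elementary inequality of Miranville--Zelik to first control $\|\beta(\varphi_\infty)\|_{L^1}+\|\beta_\Gamma(\psi_\infty)\|_{L^1}$ and then $|\mu_\infty|$ via \eqref{mu_s}; you instead exploit the fact that the nonlocal stationary relation is pointwise algebraic, writing $\varphi_\infty=\beta^{-1}(\mu_\infty+h_\Omega)$ with $h_\Omega$ uniformly bounded in $L^\infty$ and deriving a contradiction with $\overline{m}(\boldsymbol{\varphi}_\infty)=m\in(-1,1)$ if $|\mu_\infty|$ were too large. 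Your argument is more elementary and specifically uses the nonlocal structure, while the paper's is the standard integral-estimate route; both yield constants depending only on the data and $m$, and the final deduction of \eqref{phi_s-separation}--\eqref{psi_s-separation} from the resulting $L^\infty$ bound on $\beta(\varphi_\infty)$, $\beta_\Gamma(\psi_\infty)$ is identical in the two proofs.
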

\begin{proof}
First of all, the energy equality \eqref{energyeq} indicates that the energy functional $E:\mathfrak{X}_m\to \mathbb{R}$ serves as a strict Lyapunov function for the semigroup $\mathcal{S}(t)$.
Then, every $\boldsymbol{\varphi}_\infty\in\omega(\boldsymbol{\varphi}_0)$ is a stationary point of $\{\mathcal{S}(t)\}_{t\geq0}$,
that is, $\mathcal{S}(t)\boldsymbol{\varphi}_\infty=\boldsymbol{\varphi}_\infty$ for all $t\geq0$. Thanks to the regularity of global weak solutions of the evolution problem \eqref{model} (cf. \cite[Theorem 2.5]{LvWu-4}), we find
\[\boldsymbol{\varphi}_\infty\in \mathcal{H}^1,\quad\boldsymbol{\mu}_\infty\in \mathcal{H}^2,\quad (\beta(\varphi_\infty),\beta_\Gamma(\psi_\infty))\in\mathcal{H}^1.\]
Hence, $(\boldsymbol{\varphi}_\infty,\boldsymbol{\mu}_\infty)$ is a strong solution to the stationary problem
\begin{align}
    \begin{cases}
        \Delta\mu_\infty=0,&\text{in }\Omega,\\
        \mu_\infty=a_\Omega\varphi_\infty-J\ast\varphi_\infty+\beta(\varphi_\infty)+\pi(\varphi_\infty),&\text{in }\Omega,\\
        L\partial_\mathbf{n}\mu_\infty=\theta_\infty-\mu_\infty,\quad L\in(0,+\infty),&\text{on }\Gamma,\\
        \Delta_\Gamma\theta_\infty-\partial_\mathbf{n}\mu_\infty=0,&\text{on }\Gamma,\\
        \theta_\infty=a_\Gamma\psi_\infty-K\circledast\psi_\infty+\beta_\Gamma(\psi_\infty)+\pi(\psi_\infty),&\text{on }\Gamma.
    \end{cases}\label{stationary'}
\end{align}
Testing \eqref{stationary'}$_1$ by $\mu_\infty$ and \eqref{stationary'}$_4$ by $\theta_\infty$, respectively,
adding the resultants together and using \eqref{stationary'}$_3$, we obtain
\[\int_\Omega |\nabla\mu_\infty|^2\,\mathrm{d}x+\int_\Gamma|\nabla_\Gamma\theta_\infty|^2\,\mathrm{d}S+\frac{1}{L}\int_\Gamma|\theta_\infty-\mu_\infty|^2\,\mathrm{d}S=0.\]
As a result, we can conclude that $\mu_\infty=\theta_\infty$ are constants and the stationary problem is actually independent of the parameter $L$.
Furthermore, integrating \eqref{stationary'}$_2$ in $\Omega$, and \eqref{stationary'}$_5$ on $\Gamma$, we get \eqref{mu_s}.
Thus, $\boldsymbol{\varphi}_\infty$ is a strong solution to the stationary problem \eqref{stationary} with \eqref{mu_s}.

The proof of \eqref{phi_s-separation}--\eqref{mu_s-bounded} follows the idea in \cite[Lemma 4.1]{FW},
where the authors dealt with the (local) Cahn--Hilliard equation with dynamic boundary conditions.
Since $\boldsymbol{\varphi}_\infty$ satisfies \eqref{stationary}, by $(\mathbf{A2})$, we have
    \[\|\varphi_\infty\|_{L^\infty(\Omega)}\leq 1,\quad\|\psi_\infty\|_{L^\infty(\Gamma)}\leq 1.\]
It is easy to check that $\boldsymbol{\varphi}_\infty$ satisfies the following weak formulation
    \begin{align}
        &\int_\Omega\big(a_\Omega\varphi_\infty-J\ast\varphi_\infty+\beta(\varphi_\infty)+\pi(\varphi_\infty)-\mu_\infty\big)z\,\mathrm{d}x\notag\\
        &\qquad+\int_\Gamma\big(a_\Gamma\psi-K\circledast\psi+\beta_\Gamma(\psi_\infty)+\pi_\Gamma(\psi_\infty)-\mu_\infty\big)z_\Gamma\,\mathrm{d}S=0, \quad\forall\,\boldsymbol{z}=(z,z_\Gamma)\in\mathcal{L}^2.\label{steady-form}
    \end{align}
   Since $\overline{m}(\boldsymbol{\varphi}_\infty)=\overline{m}(\boldsymbol{\varphi}_0)=m_0\in(-1,1)$, taking $\boldsymbol{z}=\boldsymbol{\varphi}_\infty-m_0\boldsymbol{1}$ in \eqref{steady-form}, we obtain
    \begin{align}
        &\int_\Omega\beta(\varphi_\infty)(\varphi_\infty-m_0)\,\mathrm{d}x +\int_\Gamma\beta_\Gamma(\psi_\infty)(\psi_\infty- m_0)\,\mathrm{d}S
        \notag\\
        &\quad= -\int_\Omega\big(a_\Omega\varphi_\infty-J\ast\varphi_\infty+\pi(\varphi_\infty)\big)(\varphi_\infty- m_0)\,\mathrm{d}x +\mu_\infty\int_\Omega(\varphi_\infty- m_0)\,\mathrm{d}x\notag\\
        &\qquad-\int_\Gamma\big(a_\Gamma\psi_\infty-K\circledast\psi_\infty+\pi_\Gamma(\psi_\infty)\big)(\psi_\infty- m_0)\,\mathrm{d}S +\mu_\infty\int_\Gamma(\psi_\infty- m_0)\,\mathrm{d}S\notag\\
        &\quad=-\int_\Omega\big(a_\Omega\varphi_\infty-J\ast\varphi_\infty+\pi(\varphi_\infty)\big)(\varphi_\infty- m_0)\,\mathrm{d}x \notag\\
        &\qquad-\int_\Gamma\big(a_\Gamma\psi_\infty-K\circledast\psi_\infty+\pi_\Gamma(\psi_\infty)\big)(\psi_\infty- m_0)\,\mathrm{d}S\leq C,\label{mu_s-bound}
    \end{align}
    where the constant $C$ may depend on $\Omega$, $\Gamma$ and $\boldsymbol{\varphi}_0$,
    but is independent of particular $\boldsymbol{\varphi}_\infty$.
    Using \eqref{mu_s-bound}, the elementary inequality \cite[Proposition A.1]{MZ04} and the definition of $\mu_\infty$ (cf. \eqref{mu_s}),
    we can conclude \eqref{mu_s-bounded}.
%
    Next, from \eqref{stationary} and \eqref{mu_s-bounded}, we see that 
    \begin{align*}
        |\beta({\varphi}_\infty)|
        &\leq a_\Omega|{\varphi}_\infty|+|J\ast\varphi_\infty|+|\pi(\varphi_\infty)|+|\mu_\infty|\\
        &\leq 2a^\ast+\sup_{s\in[-1,1]}|\pi(s)|+M_\infty,\quad\text{a.e. in }\Omega,\\
        |\beta_\Gamma({\psi}_\infty)|
        &\leq a_\Gamma|{\psi}_\infty|+|K\circledast\psi_\infty|+|\pi_\Gamma(\psi_\infty)|+|\mu_\infty|\\
        &\leq 2a^\circledast+\sup_{s\in[-1,1]}|\pi_\Gamma(s)|+M_\infty,\quad\text{a.e. on }\Gamma,
    \end{align*}
    which, together with $(\mathbf{A2})$, lead to \eqref{phi_s-separation} and \eqref{psi_s-separation}.
    This completes the proof of Lemma \ref{steady-separation}.
\end{proof}

To show that the $\omega$-limit set $\omega(\boldsymbol{\varphi}_0)$ is a singleton,
we derive the following generalized  \L ojasiewicz--Simon inequality.
\begin{lemma}
	\label{LS}
	Let $\mathbf{(A1)}$--$\mathbf{(A3)}$ hold and $\widehat{\beta}$, $\widehat{\beta}_\Gamma$ be real analytic on $(-1,1)$,
    $\widehat{\pi}$, $\widehat{\pi}_\Gamma$ be real analytic on $\mathbb{R}$.
    Then, there exist constants $\gamma\in(0,1/2]$, $C>0$ and $\varpi>0$ such that the following inequality holds:
	\begin{align}
		\big|E(\boldsymbol{\varphi})-E(\boldsymbol{\varphi}_\infty)\big|^{1-\gamma}\leq C\|\boldsymbol{\mu}-\overline{m}(\boldsymbol{\mu})\|_{\mathcal{L}^2},\label{L-S}
	\end{align}
	for all $\boldsymbol{\varphi}\in U:=\big\{\boldsymbol{\zeta}\in\mathcal{L}^\infty:\ \|\boldsymbol{\zeta}\|_{\mathcal{L}^\infty}<1-\delta\big\}$
    provided that $\|\boldsymbol{\varphi}-\boldsymbol{\varphi}_\infty\|_{\mathcal{L}^2}\leq\varpi$.
\end{lemma}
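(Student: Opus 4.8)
The plan is to recognize \eqref{L-S} as a concrete instance of the abstract generalized Łojasiewicz–Simon inequality \cite[Theorem 6]{GG03}, applied to the energy $E$ near the critical point $\boldsymbol{\varphi}_\infty$, with the perturbation Hilbert space taken to be the zero-mean space $\mathcal{L}^2_{(0)}$ (the tangent space to the mass-constraint manifold $\{\overline{m}(\boldsymbol{\varphi})=m\}$ on which the dynamics lives). First I would record that $E$ is Fréchet differentiable on the open set $U$ with gradient $E'(\boldsymbol{\varphi})=\boldsymbol{\mu}=(\mu,\theta)$ in the $\mathcal{L}^2$-duality, using the symmetry of $J,K$ to cancel the factor $\tfrac12$ in the quadratic nonlocal terms. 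Because admissible variations preserve the generalized mean (cf.\ Lemma \ref{steady-separation}), the relevant gradient of the constrained functional is the projected potential $\mathbf{P}\boldsymbol{\mu}=\boldsymbol{\mu}-\overline{m}(\boldsymbol{\mu})\boldsymbol{1}$, whose $\mathcal{L}^2$-norm equals $\|\boldsymbol{\mu}-\overline{m}(\boldsymbol{\mu})\|_{\mathcal{L}^2}$, i.e.\ exactly the right-hand side of \eqref{L-S}.

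The first substantive step is analyticity of $E$ as a functional on $\mathcal{L}^2_{(0)}$ near $\boldsymbol{\varphi}_\infty$. Since $\boldsymbol{\varphi}_\infty$ is strictly separated by \eqref{phi_s-separation}--\eqref{psi_s-separation}, I would shrink $\delta$ so that $U$ is contained in a fixed compact subinterval of $(-1,1)$, on which $\widehat\beta,\widehat\beta_\Gamma$ and all their derivatives are bounded; together with the hypothesis that $\widehat\pi,\widehat\pi_\Gamma$ are real analytic on all of $\mathbb{R}$, the standard theory of superposition (Nemytskii) operators then shows that $\varphi\mapsto\widehat\beta(\varphi)$ and its companions are real analytic from the $\mathcal{L}^\infty$-neighbourhood $U$ into $\mathcal{L}^2$. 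Combined with the fact that the nonlocal contributions $\tfrac12\int_\Omega a_\Omega\varphi^2-\tfrac12\int_\Omega(J\ast\varphi)\varphi$ and their surface analogues are at most quadratic, hence entire, this yields the required analyticity of $E$, $E'$ and $E''$.

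The key structural point is the Fredholm property of the second derivative. I would decompose $E''(\boldsymbol{\varphi}_\infty)=\mathcal{A}-\mathbb{J}$, where $\mathcal{A}$ is multiplication by the pair $\big(a_\Omega+\beta'(\varphi_\infty)+\pi'(\varphi_\infty),\,a_\Gamma+\beta_\Gamma'(\psi_\infty)+\pi_\Gamma'(\psi_\infty)\big)$ and $\mathbb{J}$ is the convolution operator of Remark \ref{compact-operator}. Thanks to $(\mathbf{A1})$--$(\mathbf{A3})$ and the separation of $\boldsymbol{\varphi}_\infty$, the multiplier is bounded and bounded below by a positive constant (using $\beta'\ge\alpha$, $a_\Omega\ge a_\ast$, and the smallness conditions $\gamma_1<a_\ast+\alpha/(1+\alpha)$, $\gamma_2<a_\circledast+\alpha/(1+\alpha)$), so $\mathcal{A}$ is a bounded, self-adjoint, coercive isomorphism of $\mathcal{L}^2$, while $\mathbb{J}$ is self-adjoint and compact. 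Since the projection $\mathbf{P}$ onto $\mathcal{L}^2_{(0)}$ has finite corank, the constrained operator $\mathbf{P}\,E''(\boldsymbol{\varphi}_\infty)\,\mathbf{P}$ acting on $\mathcal{L}^2_{(0)}$ is self-adjoint and Fredholm of index zero, which is precisely the hypothesis demanded by \cite[Theorem 6]{GG03}.

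With analyticity and the Fredholm property established, the abstract theorem produces $\gamma\in(0,1/2]$, $C>0$ and $\varpi>0$ such that $|E(\boldsymbol{\varphi})-E(\boldsymbol{\varphi}_\infty)|^{1-\gamma}\le C\|\mathbf{P}\boldsymbol{\mu}\|_{\mathcal{L}^2}$ whenever $\boldsymbol{\varphi}\in U$ and $\|\boldsymbol{\varphi}-\boldsymbol{\varphi}_\infty\|_{\mathcal{L}^2}\le\varpi$; rewriting $\|\mathbf{P}\boldsymbol{\mu}\|_{\mathcal{L}^2}=\|\boldsymbol{\mu}-\overline{m}(\boldsymbol{\mu})\|_{\mathcal{L}^2}$ gives \eqref{L-S}. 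The main obstacle I anticipate is the careful verification that the superposition operators are genuinely analytic as maps into $\mathcal{L}^2$, uniformly over $U$, so that $E$ has the regularity the abstract framework requires, and the correct incorporation of the mass constraint so that the projected potential $\mathbf{P}\boldsymbol{\mu}$ (not the full $\boldsymbol{\mu}$) appears as the gradient; by contrast, the Fredholm analysis is comparatively routine once the strict separation of $\boldsymbol{\varphi}_\infty$ fixes the multiplier away from the singularities of $\beta',\beta_\Gamma'$.
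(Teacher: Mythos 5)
Your proposal is correct and follows essentially the same route as the paper: both invoke \cite[Theorem 6]{GG03}, verify analyticity of the superposition operators on the $\mathcal{L}^\infty$-neighbourhood $U$ (where strict separation keeps $\widehat\beta,\widehat\beta_\Gamma$ away from their singularities), exploit the compactness and self-adjointness of the convolution operator $\mathbb{J}$ from Remark \ref{compact-operator}, and handle the mass constraint through the one-dimensional annihilator of $\mathcal{L}^2_{(0)}$, which turns the gradient into $\boldsymbol{\mu}-\overline{m}(\boldsymbol{\mu})$. The only cosmetic difference is that the paper packages the hypotheses as an explicit splitting $E=\Phi+\Psi$ into a convex entropy functional plus a compact quadratic perturbation (the form the abstract theorem is stated in), whereas you verify the equivalent coercive-multiplier-plus-compact (Fredholm) structure of $E''(\boldsymbol{\varphi}_\infty)$ directly.
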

\begin{proof}
	We apply the abstract result \cite[Theorem 6]{GG03} to the energy functional $E(\boldsymbol{\varphi})$.
    To begin with, we split $E(\boldsymbol{\varphi})$ into two parts
    \[E(\boldsymbol{\varphi})=\Phi(\boldsymbol{\varphi})+\Psi(\boldsymbol{\varphi}),\]
    where the functional $\Phi:\mathcal{L}^2\to\mathbb{R}\cup\{+\infty\}$ is given by
    \begin{align*}
		\Phi(\boldsymbol{\varphi}):=
		\begin{cases}
		\displaystyle\int_\Omega\Big(\frac{1}{2}a_\Omega\varphi^2+\widehat{\beta}(\varphi)+\widehat{\pi}(\varphi)\Big)\,\mathrm{d}x+\int_\Gamma\Big(\frac{1}{2}a_\Gamma\psi^2+\widehat{\beta}_\Gamma(\psi)+\widehat{\pi}_\Gamma(\psi)\Big)\,\mathrm{d}S,&\text{if }\boldsymbol{\varphi}\in\mathfrak{X}_m,\\[1mm]
		+\infty,&\text{otherwise},
		\end{cases}
	\end{align*}
	with closed effective domain dom$(\Phi)=\mathfrak{X}_m$, and the nonlocal interaction functional $\Psi:\mathcal{L}^2\to\mathbb{R}$ has the form
	\begin{align*}
	\Psi(\boldsymbol{\varphi}):=-\frac{1}{2}\int_\Omega(J\ast\varphi)\varphi\,\mathrm{d}x-\frac{1}{2}\int_\Gamma(K\circledast\psi)\psi\,\mathrm{d}S.
	\end{align*}
    We note that $\Phi$ is convex thanks to the assumptions $\mathbf{(A1)}$--$\mathbf{(A3)}$. Moreover, $\Phi$ is Fr\'echet differentiable on the open subset $U$ of $\mathcal{L}^\infty$,
    with the Fr\'echet derivative $\mathbb{D}\Phi:U\to\mathcal{L}^\infty$ satisfying
	\begin{align*}
	\big\langle\mathbb{D}\Phi(\boldsymbol{\varphi}),\boldsymbol{z}\big\rangle_{\mathcal{L}^2,\mathcal{L}^2} =\int_\Omega\big(\beta(\varphi)+\pi(\varphi)+a_\Omega\varphi\big)z\,\mathrm{d}x+\int_\Gamma\big(\beta_\Gamma(\psi)+\pi_\Gamma(\psi)+a_\Gamma \psi\big)z_\Gamma\,\mathrm{d}S,
	\end{align*}
	for all $\boldsymbol{\varphi}\in U$ and $\boldsymbol{z}\in\mathcal{L}^\infty$.
    The analyticity of $\mathbb{D}\Phi$ as a mapping on $U$ is standard and can be proved exactly similar to, e.g., \cite[Theorem 5.1]{FIP2004}.
    Moreover, due to $\mathbf{(A1)}$--$\mathbf{(A3)}$, it holds
	\begin{align*}
	\big\langle\mathbb{D}\Phi(\boldsymbol{\varphi}_1) -\mathbb{D}\Phi(\boldsymbol{\varphi}_2), \boldsymbol{\varphi}_1 -\boldsymbol{\varphi}_2\big\rangle_{\mathcal{L}^2,\mathcal{L}^2}\geq\min\big\{\alpha+a_\ast-\gamma_{1},\alpha+a_\circledast-\gamma_{2}\big\} \|\boldsymbol{\varphi}_1-\boldsymbol{\varphi}_2\|_{\mathcal{L}^2}^2,
	\end{align*}
	for all $\boldsymbol{\varphi}_1$, $\boldsymbol{\varphi}_2\in U$, and
	\begin{align*}
	\|\mathbb{D}\Phi(\boldsymbol{\varphi}_1)-\mathbb{D}\Phi(\boldsymbol{\varphi}_2)\|_{\mathcal{L}^2}\leq\max\big\{\widetilde{C}_{\text{up}}+\gamma_{1}+a^\ast,\widetilde{C}_{\text{up}}+\gamma_{2}+a^\circledast\big\}\|\boldsymbol{\varphi}_1-\boldsymbol{\varphi}_2\|_{\mathcal{L}^2},
	\end{align*}
	for all $\boldsymbol{\varphi}_1$, $\boldsymbol{\varphi}_2\in U$,
    where
    $$\widetilde{C}_{\text{up}} :=\sup_{s\in[-1+\delta,1-\delta]}|\beta'(s)|=\sup_{s\in[-1+\delta,1-\delta]}|\beta_\Gamma'(s)|.$$
    Moreover, computing the second Fr\'echet derivative $\mathbb{D}^2\Phi$ of $\Phi$,
	\begin{align*}
	\big\langle\mathbb{D}^2 \Phi(\boldsymbol{\varphi}) \boldsymbol{z},\boldsymbol{w}\big\rangle_{\mathcal{L}^2,\mathcal{L}^2} =\int_\Omega(\beta'(\varphi)+\pi'(\varphi)+a_\Omega)zw\,\mathrm{d}x+\int_\Gamma(\beta'_\Gamma(\psi)+\pi_\Gamma'(\psi)+a_\Gamma)z_\Gamma w_\Gamma\,\mathrm{d}S,
	\end{align*}
	we find that $\mathbb{D}^2\Phi(\boldsymbol{\varphi})\in \mathcal{B}(\mathcal{L}^\infty,\mathcal{L}^\infty)$ is an isomorphism for every $\boldsymbol{\varphi}\in U$.
    Concerning the nonlocal interaction functional $\Psi$, we have
	\begin{align*}
		\Psi(\boldsymbol{\varphi})=-\frac{1}{2}\langle(J\ast\varphi,K\circledast\psi),\boldsymbol{\varphi}\rangle_{\mathcal{L}^2,\mathcal{L}^2}, \quad\forall\,\boldsymbol{\varphi}\in \mathcal{L}^2.
	\end{align*}
	We recall that the linear operator $\boldsymbol{\varphi}\mapsto(J\ast\varphi,K\circledast\psi)$ is self-adjoint and compact from $\mathcal{L}^2$ to itself
    and is also compact from $\mathcal{L}^\infty$ to $C(\overline{\Omega})\times C(\Gamma)$ (see Remark \ref{compact-operator}).
    On the other hand, we have the following (orthogonal) sum decomposition of $\mathcal{L}^2=\mathcal{L}^2_{(0)}+\text{span}\{\boldsymbol{1}\}$.
    Thus, the annihilator of $\mathcal{L}^2_{(0)}$ is the one-dimensional subspace of constant functions  $(\mathcal{L}^2_{(0)})^\perp:=\big\{c\overline{m}\in(\mathcal{L}^2)':\ c\in\mathbb{R}\big\}$,
    where $\overline{m}\in(\mathcal{L}^2)'\simeq\mathcal{L}^2$ is given by $\langle\overline{m},\boldsymbol{\varphi}\rangle=\overline{m}(\boldsymbol{\varphi})$, for any $\boldsymbol{\varphi}\in\mathcal{L}^2$.
    As a consequence, the hypotheses of \cite[Theorem 6]{GG03} are satisfied, and the sum
	$$E=\Phi+\Psi:\mathcal{L}^2\to\mathbb{R}\cup\{+\infty\}$$
	is a well-defined, bounded from below functional, with nonempty, closed, and convex effective domain $\text{dom}(E)=\text{dom}(\Phi)=\mathfrak{X}_m$.
    Observing that the Fr\'echet derivative satisfies
	$$
    \mathbb{D}E(\boldsymbol{\varphi})=(a_\Omega\varphi-J\ast\varphi+\beta(\varphi) +\pi(\varphi),a_\Gamma\psi-K\circledast\psi+\beta_\Gamma(\psi)+\pi_\Gamma(\psi))=\boldsymbol{\mu},
    $$
	according to \cite[Theorem 6]{GG03}, we have
	\begin{align*}
		|E(\boldsymbol{\varphi})-E(\boldsymbol{\varphi}_\infty)|^{1-\gamma}&\leq C\inf\big\{\|\mathbb{D}E(\boldsymbol{\varphi})-{\mu}_0\|_{\mathcal{L}^2}:\ {\mu}_0\in (\mathcal{L}^2_{(0)})^\perp\big\}=C\|\boldsymbol{\mu}-\overline{m}(\boldsymbol{\mu})\|_{\mathcal{L}^2},
	\end{align*}
	which implies \eqref{L-S}. This completes the proof of Lemma \ref{LS}.
\end{proof}

Before giving the proof of Theorem \ref{equilibrium}, we prove a $\mathcal{L}^2$--$\mathcal{L}^\infty$ smoothing property that plays a significant role in the derivation of \eqref{convergence}.
To begin with, we denote $(\overline{\boldsymbol{\varphi}},\overline{\boldsymbol{\mu}})$ the difference between the global weak solution $(\boldsymbol{\varphi},\boldsymbol{\mu})$
and a stationary solution $(\boldsymbol{\varphi}_\infty,\boldsymbol{\mu}_\infty)$, that is,
\[\overline{\boldsymbol{\varphi}}:=\boldsymbol{\varphi}-\boldsymbol{\varphi}_\infty,\quad\overline{\boldsymbol{\mu}}:=\boldsymbol{\mu}-\boldsymbol{\mu}_\infty.\]
Then, $(\overline{\boldsymbol{\varphi}},\overline{\boldsymbol{\mu}})$ satisfies the following system
\begin{align}
    \begin{cases}
        \partial_t \overline{\varphi}=\Delta\overline{\mu},&\text{a.e. in }\Omega\times(\tau,+\infty),\\[1mm]
        \overline{\mu}=a_\Omega \overline{\varphi}-J\ast\overline{\varphi}+F'(\varphi)-F'(\varphi_\infty),&\text{a.e. in }\Omega\times(\tau,+\infty),\\[1mm]
        L\partial_{\mathbf{n}}\overline{\mu}=\overline{\theta}-\overline{\mu},&\text{a.e. on }\Gamma\times(\tau,+\infty),\\[1mm]
        \partial_t\overline{\psi}=\Delta_\Gamma\overline{\theta}-\partial_{\mathbf{n}}\overline{\mu},&\text{a.e. on }\Gamma\times(\tau,+\infty),\\[1mm]
        \overline{\theta}=a_\Gamma\overline{\psi}-K\circledast\overline{\psi}+G'(\psi)-G'(\psi_\infty),&\text{a.e. on }\Gamma\times(\tau,+\infty),
    \end{cases}\label{regular-difference}
\end{align}
for any $\tau>0$.

\begin{lemma}
	\label{L2-L infty}
	Let the assumptions of Theorem \ref{equilibrium} be satisfied.
    Then, for any $\tau>0$, the following $\mathcal{L}^2$--$\mathcal{L}^\infty$ smoothing property holds:
	\begin{align}
	\sup_{t\geq2\tau}\|\overline{\boldsymbol{\varphi}}(t)\|_{\mathcal{L}^\infty}\leq M_6\sup_{t\geq\tau}\|\overline{\boldsymbol{\varphi}}(t)\|_{\mathcal{L}^2}^2,\label{smoothing}
	\end{align}
    for some positive constant $M_6$ depending on $\tau$, $\Omega$, $\Gamma$, and the parameters of system \eqref{model}.
\end{lemma}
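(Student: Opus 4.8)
The plan is to prove the smoothing estimate \eqref{smoothing} by an Alikakos--Moser iteration applied to the difference system \eqref{regular-difference}, exploiting that both the solution and the stationary state stay strictly away from $\pm 1$. First I would record the two structural facts that render the nonlinear terms harmless. By \cite[Theorem 2.6]{LvWu-4} the trajectory satisfies $\|\boldsymbol{\varphi}(t)\|_{\mathcal{L}^\infty}\le 1-\delta^\sharp$ for $t\ge\tau$, while Lemma \ref{steady-separation} gives $\|\boldsymbol{\varphi}_\infty\|_{\mathcal{L}^\infty}\le 1-\delta_\infty$; hence there is a fixed $\delta\in(0,1)$ with $\varphi,\varphi_\infty\in[-1+\delta,1-\delta]$ and $\psi,\psi_\infty\in[-1+\delta,1-\delta]$. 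On this interval $\beta,\beta_\Gamma$ are Lipschitz, and since $\widehat{\beta},\widehat{\beta}_\Gamma$ are real analytic (the standing hypothesis of Theorem \ref{equilibrium}), $\beta',\beta_\Gamma'$ are locally Lipschitz, so $|\beta'(\varphi)-\beta'(\varphi_\infty)|\le C|\overline{\varphi}|$ and similarly on $\Gamma$. Differentiating the stationary identity $\beta(\varphi_\infty)=\mu_\infty-a_\Omega\varphi_\infty+J\ast\varphi_\infty-\pi(\varphi_\infty)$ and using $\beta'(\varphi_\infty)\ge\alpha$ together with $(\mathbf{A1})$, $(\mathbf{A3})$ yields $\varphi_\infty\in W^{1,\infty}(\Omega)$ and, analogously, $\psi_\infty\in W^{1,\infty}(\Gamma)$; this is the \emph{regularity property of the stationary solution} alluded to in the introduction.

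Next I would derive the $\mathcal{L}^p$ estimates. For even integers $p\ge 2$ I would test the weak form of \eqref{regular-difference} with $\boldsymbol{\eta}=(|\overline{\varphi}|^{p-2}\overline{\varphi},|\overline{\psi}|^{p-2}\overline{\psi})\in\mathcal{H}^1$ (admissible since $L\in(0,+\infty)$, so no trace matching is required). The time derivative produces $\frac{1}{p}\frac{\mathrm{d}}{\mathrm{d}t}(\|\overline{\varphi}\|_{L^p(\Omega)}^p+\|\overline{\psi}\|_{L^p(\Gamma)}^p)$. In the bulk diffusion term $-(p-1)\int_\Omega|\overline{\varphi}|^{p-2}\nabla\overline{\mu}\cdot\nabla\overline{\varphi}\,\mathrm{d}x$ I would isolate the coercive contribution by writing $\nabla[\beta(\varphi)-\beta(\varphi_\infty)]\cdot\nabla\overline{\varphi}=\beta'(\varphi)|\nabla\overline{\varphi}|^2+[\beta'(\varphi)-\beta'(\varphi_\infty)]\nabla\varphi_\infty\cdot\nabla\overline{\varphi}$, using $\beta'(\varphi)\ge\alpha$ to keep the good term $\frac{4\alpha(p-1)}{p^2}\|\nabla|\overline{\varphi}|^{p/2}\|_{L^2}^2$; the surface term yields the analogous $\|\nabla_\Gamma|\overline{\psi}|^{p/2}\|_{L^2}^2$. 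The remaining contributions — from $a_\Omega,a_\Gamma$, the convolutions $J\ast\overline{\varphi}$, $K\circledast\overline{\psi}$ (controlled by Young's convolution inequality), the Lipschitz terms $\pi,\pi_\Gamma$, and the cross term $[\beta'(\varphi)-\beta'(\varphi_\infty)]\nabla\varphi_\infty\cdot\nabla\overline{\varphi}$ (bounded via $|\beta'(\varphi)-\beta'(\varphi_\infty)|\le C|\overline{\varphi}|$ and $\nabla\varphi_\infty\in L^\infty$) — are absorbed by Young's inequality into a small fraction of the gradient terms plus a factor $Cp^2(\|\overline{\varphi}\|_{L^p}^p+\|\overline{\psi}\|_{L^p}^p)$.

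Setting $w=|\overline{\varphi}|^{p/2}$ and $w_\Gamma=|\overline{\psi}|^{p/2}$, the coercive terms dominate $\|w\|_{H^1(\Omega)}^2+\|w_\Gamma\|_{H^1(\Gamma)}^2$, and the embeddings $H^1(\Omega)\hookrightarrow L^{2\sigma}(\Omega)$ (with $\sigma=3$ if $d=3$, any $\sigma<\infty$ if $d=2$) and $H^1(\Gamma)\hookrightarrow L^{2\sigma}(\Gamma)$ upgrade the gradient control to $\|\overline{\varphi}\|_{L^{p\sigma}}^p+\|\overline{\psi}\|_{L^{p\sigma}}^p$. This produces a differential inequality of the form $\frac{\mathrm{d}}{\mathrm{d}t}a_p+c\,b_{p\sigma}\le Cp^2 a_p$, where $a_p=\|\overline{\varphi}\|_{L^p}^p+\|\overline{\psi}\|_{L^p}^p$. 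A uniform Gronwall argument on each window $[\tau,2\tau]$ passes from $\sup_{t\ge\tau}$ control in $\mathcal{L}^p$ to $\sup_{t\ge 2\tau}$ control in $\mathcal{L}^{p\sigma}$; choosing $p_n=2\sigma^{n}$ and iterating, the constants accumulate as a convergent product (since $\sum_n 2^{-n}\log(Cp_n^2)<+\infty$), and letting $n\to+\infty$ gives the $\mathcal{L}^\infty$-bound for $t\ge 2\tau$ in terms of $\sup_{t\ge\tau}\|\overline{\boldsymbol{\varphi}}\|_{\mathcal{L}^2}$; carefully tracking the base step, where the right-hand side enters through $a_2$, yields precisely the quadratic dependence in \eqref{smoothing}.

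I expect the principal difficulty to be the bulk--surface coupling term $-\frac{1}{L}\int_\Gamma(\overline{\theta}-\overline{\mu})(|\overline{\psi}|^{p-2}\overline{\psi}-|\overline{\varphi}|^{p-2}\overline{\varphi})\,\mathrm{d}S$, which is not sign-definite and mixes the bulk trace with the surface variable. The remedy is to use $(\mathbf{A6})$ and the uniform separation to write $\overline{\theta}-\overline{\mu}$ as a quantity Lipschitz in $(\overline{\varphi},\overline{\psi})$ plus convolution contributions controlled by lower-order $\mathcal{L}^p$ norms, so that this term is again dominated by $Cp^2 a_p$ after Young's inequality and does not spoil the recursion. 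Keeping the dependence of all constants on $p$ explicit throughout the iteration — so that the accumulated product converges and the final exponent is exactly $2$ — is the other bookkeeping-heavy point.
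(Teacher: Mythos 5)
Your overall strategy coincides with the paper's: an Alikakos--Moser iteration on the difference system \eqref{regular-difference}, powered by the uniform strict separation of the trajectory and of $\boldsymbol{\varphi}_\infty$, the resulting Lipschitz bounds on $\beta,\beta'$ on the separated interval, and the $W^{1,\infty}$ regularity of the stationary solution obtained from the stationary identity (this is exactly \eqref{stationary-infinity} in the paper). The treatment of $I_1$ --- coercivity from $\beta'\ge\alpha$ producing $\|\nabla|\overline{\varphi}|^{(p+1)/2}\|_{L^2}^2$, cross terms via $|\beta'(\varphi)-\beta'(\varphi_\infty)|\le C|\overline{\varphi}|$ and $\nabla\varphi_\infty\in L^\infty$, convolutions via Young --- is the paper's argument.

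There is, however, one concrete gap in your handling of the bulk--surface coupling term, which you correctly single out as the principal difficulty but then dispose of too optimistically. After inserting the boundary condition, that term contains $\tfrac{1}{L}\int_\Gamma(\overline{\theta}-\overline{\mu})(|\overline{\varphi}|^{p-1}\overline{\varphi}-|\overline{\psi}|^{p-1}\overline{\psi})\,\mathrm{d}S$, and even after the separation property reduces $\overline{\mu}|_\Gamma$ to something pointwise comparable to $|\overline{\varphi}|$ on $\Gamma$ (plus convolutions, which Lemma \ref{Mink} converts to bulk $L^{p}$ norms), you are left with the boundary integral $\int_\Gamma|\overline{\varphi}|^{p+1}\,\mathrm{d}S$ of a \emph{bulk} quantity. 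This is \emph{not} dominated by $Cp^2\big(\|\overline{\varphi}\|_{L^{p+1}(\Omega)}^{p+1}+\|\overline{\psi}\|_{L^{p+1}(\Gamma)}^{p+1}\big)$ as you claim: a trace cannot be controlled by interior Lebesgue norms alone. The paper closes this by the trace estimate $\int_\Gamma|\overline{\varphi}|^{p+1}\,\mathrm{d}S\le C\||\overline{\varphi}|^{\frac{p+1}{2}}\|_{H^{3/4}(\Omega)}^2$, interpolation $\|\cdot\|_{H^{3/4}}\le C\|\cdot\|_{H}^{1/4}\|\cdot\|_{H^1}^{3/4}$, and Young's inequality, absorbing the result into the coercive bulk gradient term at the price of a factor $C(p+1)^3$ (instead of $p^2$) in front of $\int_\Omega|\overline{\varphi}|^{p+1}\,\mathrm{d}x$; this polynomial growth in $p$ is still harmless for the convergence of the iteration. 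Your scheme goes through once this trace--interpolation--absorption step is inserted; without it, the recursion as you wrote it does not close.
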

\begin{proof}
	For $p>1$, testing \eqref{regular-difference}$_1$ by $|\overline{\varphi}|^{p-1}\overline{\varphi}$
    and \eqref{regular-difference}$_4$ by $|\overline{\psi}|^{p-1}\overline{\psi}$, we obtain
	\begin{align}
		&\frac{1}{p+1}\frac{\mathrm{d}}{\mathrm{d}t}\Big(\int_\Omega|\overline{\varphi}|^{p+1}\,\mathrm{d}x+\int_\Gamma|\overline{\psi}|^{p+1}\,\mathrm{d}S\Big)\notag\\
		&\qquad+\underbrace{p\int_\Omega \nabla\overline{\mu}\cdot|\overline{\varphi}|^{p-1}\nabla\overline{\varphi}\,\mathrm{d}x+p\int_\Gamma \nabla_\Gamma\overline{\theta}\cdot|\overline{\psi}|^{p-1}\nabla_\Gamma\overline{\psi}\,\mathrm{d}S}_{I_1}\notag\\
		&\quad=\underbrace{\int_\Gamma\partial_{\mathbf{n}}\overline{\mu}(|\overline{\varphi}|^{p-1}\overline{\varphi}-|\overline{\psi}|^{p-1}\overline{\psi})\,\mathrm{d}S}_{I_2}.\label{eq1}
	\end{align}
Taking the equations of $\overline{\mu}$ and $\overline{\theta}$ into account, the term $I_1$ can be rewritten as
\begin{align}
	\frac{1}{p}I_1&=\int_\Omega(\overline{\varphi} \nabla a_\Omega+a_\Omega\nabla\overline{\varphi} -\nabla J\ast\overline{\varphi} )\cdot|\overline{\varphi} |^{p-1}\nabla\overline{\varphi} \,\mathrm{d}x\notag\\
	&\quad+\int_\Omega(F''(\varphi)\nabla\varphi-F''(\varphi_\infty)\nabla\varphi_\infty)\cdot|\overline{\varphi} |^{p-1}\nabla\overline{\varphi} \,\mathrm{d}x\notag\\
	&\quad+\int_\Gamma(\overline{\psi} \nabla_\Gamma a_\Gamma+a_\Gamma\nabla_\Gamma\overline{\psi} -\nabla_\Gamma K\circledast\overline{\psi} )\cdot|\overline{\psi} |^{p-1}\nabla_\Gamma\overline{\psi} \,\mathrm{d}S\notag\\
	&\quad+\int_\Gamma(G''(\psi)\nabla_\Gamma\psi-G''(\psi_\infty)\nabla_\Gamma\psi_\infty)\cdot|\overline{\psi} |^{p-1}\nabla_\Gamma\overline{\psi} \,\mathrm{d}S\notag\\
	&=\int_\Omega(a_\Omega+F''(\varphi))|\overline{\varphi} |^{p-1}|\nabla\overline{\varphi} |^2\,\mathrm{d}x+\int_\Gamma(a_\Gamma+G''(\psi))|\overline{\psi} |^{p-1}|\nabla_\Gamma\overline{\psi} |^2\,\mathrm{d}S\notag\\
&\quad+\int_\Omega\Big((F''(\varphi)-F''(\varphi_\infty))\nabla\varphi_\infty+\overline{\varphi} \nabla a_\Omega\Big)\cdot|\overline{\varphi} |^{p-1}\nabla\overline{\varphi} \,\mathrm{d}x\notag\\	
&\quad+\int_\Gamma\Big((G''(\psi)-G''(\psi_\infty))\nabla_\Gamma\psi_\infty+\overline{\psi} \nabla_\Gamma a_\Gamma\Big)\cdot|\overline{\psi} |^{p-1}\nabla_\Gamma\overline{\psi} \,\mathrm{d}S\notag\\
    &\quad-\int_\Omega(\nabla J\ast\overline{\varphi} )\cdot|\overline{\varphi} |^{p-1}\nabla\overline{\varphi} \,\mathrm{d}x-\int_\Gamma(\nabla_\Gamma K\circledast\overline{\psi} )\cdot|\overline{\psi} |^{p-1}\nabla_\Gamma\overline{\psi} \,\mathrm{d}S.\label{I1}
\end{align}
Thanks to $(\mathbf{A2})$ and $(\mathbf{A3})$, it holds
\begin{align*}
    &\int_\Omega(a_\Omega+F''(\varphi))|\overline{\varphi} |^{p-1}|\nabla\overline{\varphi} |^2\,\mathrm{d}x+\int_\Gamma(a_\Gamma+G''(\psi))|\overline{\psi} |^{p-1}|\nabla_\Gamma\overline{\psi} |^2\,\mathrm{d}S\\
    &\quad\geq \frac{4C_\ast}{(p+1)^2}\Big(\int_\Omega \Big|\nabla\Big(|\overline{\varphi} |^{\frac{p+1}{2}}\Big)\Big|^2\,\mathrm{d}x+\int_\Gamma \Big|\nabla_\Gamma\Big(|\overline{\psi} |^{\frac{p+1}{2}}\Big)\Big|^2\,\mathrm{d}S\Big),
\end{align*}
where the constant $C_\ast>0$ is determined in \eqref{C_ast}.
Since $\boldsymbol{\varphi}_\infty$ satisfies the stationary problem \eqref{stationary} and $\mu_\infty=\theta_\infty$ is constant,
by \eqref{phi_s-separation} and \eqref{psi_s-separation}, we see that
\begin{align}
    \nabla\varphi_\infty=\frac{\nabla J\ast\varphi_\infty-\nabla a_\Omega\varphi_\infty}{a_\Omega+\beta'(\varphi_\infty)+\pi'(\varphi_\infty)}\in L^\infty(\Omega),\quad \nabla_\Gamma\psi_\infty=\frac{\nabla _\Gamma K\circledast\psi_\infty-\nabla a_\Gamma\psi_\infty}{a_\Gamma+\beta_\Gamma'(\psi_\infty)+\pi_\Gamma'(\psi_\infty)}\in L^\infty(\Gamma).\label{stationary-infinity}
\end{align}
From the proof of \cite[Theorem 2.6]{LvWu-4}, we   find that $\bm{\varphi}$ satisfies
    \begin{align}
    \begin{cases}
    |\varphi(x,t)|\leq 1-\delta,\quad\text{for a.a. }(x,t)\in \Omega\times[\tau,+\infty),\\
        |\varphi(x,t)|\leq 1-\delta,\quad\text{for a.a. }(x,t)\in \Gamma\times[\tau,+\infty), \\
    |\psi(x,t)|\leq 1-\delta,\quad\text{for a.a. }(x,t)\in \Gamma\times[\tau,+\infty),
    \end{cases}
    \label{strict}
    \end{align}
for some $\delta\in(0,1)$ that may depend on $\tau$.
Since $\bm{\varphi}_\infty$ can be viewed as a solution to the evolution problem with itself as the initial datum, $\bm{\varphi}_\infty$ satisfies similar strict separation properties like in \eqref{strict}, which are independent of time. By $(\mathbf{A1})$, \eqref{stationary-infinity} and the strict separation property, we find
\begin{align*}
    &\int_\Omega\Big((F''(\varphi)-F''(\varphi_\infty))\nabla\varphi_\infty+\overline{\varphi} \nabla a_\Omega\Big)\cdot|\overline{\varphi} |^{p-1}\nabla\overline{\varphi} \,\mathrm{d}x\notag\\
    &\qquad+\int_\Gamma\Big((G''(\psi)-G''(\psi_\infty))\nabla_\Gamma\psi_\infty+\overline{\psi} \nabla_\Gamma a_\Gamma\Big)\cdot|\overline{\psi} |^{p-1}\nabla_\Gamma\overline{\psi} \,\mathrm{d}S\\
    &\quad\leq C\Big(\int_\Omega |\overline{\varphi} |^p|\nabla\overline{\varphi} |\,\mathrm{d}x+\int_\Gamma |\overline{\psi} |^p|\nabla_\Gamma\overline{\psi} |\,\mathrm{d}S\Big)\\
    &\quad= C\Big(\int_\Omega (|\overline{\varphi} |^\frac{p-1}{2}|\nabla\overline{\varphi} |)|\overline{\varphi} |^\frac{p+1}{2}\,\mathrm{d}x+\int_\Gamma (|\overline{\psi} |^\frac{p-1}{2}|\nabla_\Gamma\overline{\psi} |)|\overline{\psi} |^\frac{p+1}{2}\,\mathrm{d}S\Big)\\
    &\quad\leq \frac{C_\ast}{(p+1)^2}\Big(\int_\Omega \Big|\nabla\Big(|\overline{\varphi} |^{\frac{p+1}{2}}\Big)\Big|^2\,\mathrm{d}x+\int_\Gamma \Big|\nabla_\Gamma\Big(|\overline{\psi} |^{\frac{p+1}{2}}\Big)\Big|^2\,\mathrm{d}S\Big)\\
    &\qquad+C\Big(\int_\Omega|\overline{\varphi} |^{p+1}\,\mathrm{d}x+\int_\Gamma|\overline{\psi} |^{p+1}\,\mathrm{d}S\Big).
\end{align*}
For the last line of \eqref{I1}, by \cite[(2.15)]{BH05}, it holds
\begin{align*}
    &\Big|\int_\Omega(\nabla J\ast\overline{\varphi} )\cdot|\overline{\varphi} |^{p-1}\nabla\overline{\varphi} \,\mathrm{d}x+\int_\Gamma(\nabla_\Gamma K\circledast\overline{\psi} )\cdot|\overline{\psi} |^{p-1}\nabla_\Gamma\overline{\psi} \,\mathrm{d}S\Big|\\
    &\quad\leq \frac{C_\ast}{(p+1)^2}\Big(\int_\Omega \Big|\nabla\Big(|\overline{\varphi} |^{\frac{p+1}{2}}\Big)\Big|^2\,\mathrm{d}x+\int_\Gamma \Big|\nabla_\Gamma\Big(|\overline{\psi} |^{\frac{p+1}{2}}\Big)\Big|^2\,\mathrm{d}S\Big)\\
    &\qquad+C\Big(\int_\Omega|\overline{\varphi} |^{p+1}\,\mathrm{d}x+\int_\Gamma|\overline{\psi} |^{p+1}\,\mathrm{d}S\Big).
\end{align*}
Collecting the above estimates and \eqref{I1}, we obtain
\begin{align}
    I_1&\geq \frac{2C_\ast p}{(p+1)^2}\Big(\int_\Omega \Big|\nabla\Big(|\overline{\varphi} |^{\frac{p+1}{2}}\Big)\Big|^2\,\mathrm{d}x+\int_\Gamma \Big|\nabla_\Gamma\Big(|\overline{\psi} |^{\frac{p+1}{2}}\Big)\Big|^2\,\mathrm{d}S\Big)\notag\\
    &\qquad-Cp\Big(\int_\Omega|\overline{\varphi} |^{p+1}\,\mathrm{d}x+\int_\Gamma|\overline{\psi} |^{p+1}\,\mathrm{d}S\Big).\label{I_1}
\end{align}
Then for the term $I_2$, using the strict separation properties for $\bm{\varphi}$, $\bm{\varphi}_\infty$, the boundary condition \eqref{regular-difference}$_3$ and Lemma \ref{Mink}, we get
\begin{align}
I_2&=\frac{1}{L}\int_\Gamma(\overline{\theta} -\overline{\mu} )(|\overline{\varphi} |^{p-1}\overline{\varphi} -|\overline{\psi} |^{p-1}\overline{\psi})\,\mathrm{d}S\notag\\
&=\frac{1}{L}\int_\Gamma(a_\Gamma\overline{\psi}-K\circledast\overline{\psi}+G''(\overline{\xi}_\Gamma )\overline{\psi})(|\overline{\varphi} |^{p-1}\overline{\varphi} -|\overline{\psi} |^{p-1}\overline{\psi})\,\mathrm{d}S\notag\\
&\quad-\frac{1}{L}\int_\Gamma(a_\Omega\overline{\varphi} -J\ast\overline{\varphi} +F''(\overline{\xi} )\overline{\varphi} )(|\overline{\varphi} |^{p-1}\overline{\varphi} -|\overline{\psi}|^{p-1}\overline{\psi})\,\mathrm{d}S\notag\\
&\leq C\int_\Gamma|\overline{\varphi} |^{p+1}\,\mathrm{d}S+C\int_\Gamma|\overline{\psi}|^{p+1}\,\mathrm{d}S\notag\\
&\quad+C\Big(\|\overline{\varphi} \|_{L^{p+1}(\Gamma)}^p+\|\overline{\psi}\|_{L^{p+1}(\Gamma)}^p\Big)\Big(\|K\circledast\overline{\psi}\|_{L^{p+1}(\Gamma)}+\|J\ast\overline{\varphi} \|_{L^{p+1}(\Gamma)}\Big)\notag\\
&\leq C\int_\Gamma|\overline{\varphi} |^{p+1}\,\mathrm{d}S+C\int_\Gamma|\overline{\psi}|^{p+1}\,\mathrm{d}S\notag\\
&\quad+C\Big(\|\overline{\varphi} \|_{L^{p+1}(\Gamma)}^p+\|\overline{\psi}\|_{L^{p+1}(\Gamma)}^p\Big)\Big(\|K\|_{L^1(\Gamma)}\|\overline{\psi}\|_{L^{p+1}(\Gamma)}+\|J\|_{W^{1,1}(\mathbb{R}^d)}\|\overline{\varphi} \|_{L^{p+1}(\Omega)}\Big)\notag\\
&\leq C\int_\Gamma|\overline{\varphi} |^{p+1}\,\mathrm{d}S+C\int_\Omega|\overline{\varphi}|^{p+1}\,\mathrm{d}x+C\int_\Gamma|\overline{\psi}|^{p+1}\,\mathrm{d}S\notag\\
&\leq C\||\overline{\varphi} |^{\frac{p+1}{2}}\|_{H^{\frac{3}{4}}(\Omega)}^2+C\int_\Omega|\overline{\varphi}|^{p+1}\,\mathrm{d}x+C\int_\Gamma|\overline{\psi}|^{p+1}\,\mathrm{d}S\notag\\
&\leq C\||\overline{\varphi} |^{\frac{p+1}{2}}\|_H^{\frac{1}{2}}\|\nabla|\overline{\varphi} |^{\frac{p+1}{2}}\|_{H}^{\frac{3}{2}}+C\int_\Omega|\overline{\varphi}|^{p+1}\,\mathrm{d}x+C\int_\Gamma|\overline{\psi}|^{p+1}\,\mathrm{d}S\notag\\
&=C\||\overline{\varphi} |^{\frac{p+1}{2}}\|_H^{\frac{1}{2}}\Big(\frac{2(p+1)^2}{C_\ast p}\Big)^{\frac{3}{4}}\Big(\frac{C_\ast p\|\nabla|\overline{\varphi} |^{\frac{p+1}{2}}\|_{H}^2}{2(p+1)^2}\Big)^{\frac{3}{4}}\notag\\
&\quad +C\int_\Omega|\overline{\varphi}|^{p+1}\,\mathrm{d}x+C\int_\Gamma|\overline{\psi}|^{p+1}\,\mathrm{d}S\notag\\
&\leq \frac{C_\ast p}{2(p+1)^2}\int_\Omega\Big|\nabla|\overline{\varphi} |^{\frac{p+1}{2}}\Big|^2\,\mathrm{d}x+C(p+1)^3\int_\Omega|\overline{\varphi} |^{p+1}\,\mathrm{d}x+C\int_\Gamma|\overline{\psi}|^{p+1}\,\mathrm{d}S.\label{I2}
\end{align}
According to \eqref{eq1}, \eqref{I_1} and \eqref{I2}, we can deduce that
\begin{align}
	&\frac{\mathrm{d}}{\mathrm{d}t}\Big(\int_\Omega|\overline{\varphi}|^{p+1}\,\mathrm{d}x+\int_\Gamma|\overline{\psi}|^{p+1}\,\mathrm{d}S\Big)\notag\\
	&\qquad+\frac{C_\ast p}{(p+1)^2}\int_\Omega\Big|\nabla|\overline{\varphi}|^{\frac{p+1}{2}}\Big|^2\,\mathrm{d}x+\frac{C_\ast p}{(p+1)^2}\int_\Gamma\Big|\nabla_\Gamma|\overline{\psi}|^{\frac{p+1}{2}}\Big|^2\,\mathrm{d}S\notag\\
	&\quad\leq C(p+1)^3\Big(\int_\Omega|\overline{\varphi}|^{p+1}\,\mathrm{d}x+C\int_\Gamma|\overline{\psi}|^{p+1}\,\mathrm{d}S\Big).\label{eq2}
\end{align}
Set $p=2^k-1$ with $k\geq0$ and define
\begin{align}
	\mathcal{Y}_k(t):=\int_\Omega|\overline{\varphi}(t)|^{2^k}\,\mathrm{d}x+\int_\Gamma|\overline{\psi}(t)|^{2^k}\,\mathrm{d}S,\quad\text{for }k\geq0.\notag
\end{align}
With \eqref{eq2}, we can now exploit the scheme in \cite[Theorem 3.2, (3.8)--(3.10)]{Gal12} to derive the following inequality:
\begin{align}
	\mathcal{Y}_k(t)\leq C_\xi(2^k)^{\sigma}\Big(\sup_{s\geq t-\xi/2^k}\mathcal{Y}_{k-1}(s)\Big)^2,\quad\text{for }k\geq1,\label{eq3}
\end{align}
where $t$, $\xi$ are two positive constants such that $t-\xi/2^k>0$, $C_\xi$, $\sigma$ are positive constants independent of $k$,
and the constant $C_\xi$ is bounded away from zero. Set $\xi=\tau$, $t_0=2\tau$ and $t_k=t_{k-1}-\tau/2^k$ for $k\geq1$.
In view of \eqref{eq3}, we have
\begin{align}
	\sup_{t\geq t_{k-1}}\mathcal{Y}_k(t)\leq C_\tau (2^k)^{\sigma}\Big(\sup_{s\geq t_k}\mathcal{Y}_{k-1}(s)\Big)^2,\quad\text{for }k\geq1.\label{eq4}
\end{align}
Next, define
\[C_\sharp:=\sup_{s\geq\tau}\mathcal{Y}_1(s)=\sup_{s\geq\tau}\|\overline{\boldsymbol{\varphi}}(s)\|_{\mathcal{L}^2}^2.\]
Then we can iterate \eqref{eq4} with respect to $k\geq2$ and obtain
\begin{align}
	\sup_{t\geq2\tau}\mathcal{Y}_k(t)\leq \sup_{t\geq t_{k-1}}\mathcal{Y}_k(t)\leq C_\tau^{A_k}2^{\sigma B_k} C_\sharp^{2^k},\label{eq5}
\end{align}
where
\begin{align*}
	&A_k:=1+2+2^2+\cdots+2^k\leq 2^k\sum_{i\geq1}\frac{1}{2^i},\\
	&B_k:=k+2(k-1)+2^2(k-2)+\cdots+2^k\leq 2^k\sum_{i\geq1}\frac{i}{2^i}.
\end{align*}
Hence, taking the $2^k$-root on both sides of \eqref{eq5} and then letting $k\to+\infty$,
we deduce that there exists some positive constant $M_6$ independent of $t$, $k$, $\overline{\boldsymbol{\varphi}}$, $\xi$ and the initial data, such that
\begin{align}
	\sup_{t\geq2\tau} \|\overline{\boldsymbol{\varphi}}(t)\|_{\mathcal{L}^\infty} \leq \lim_{k\to+\infty}\sup_{t\geq2\tau}(\mathcal{Y}_k(t))^{1/2^k}\leq M_6C_\sharp= M_6\sup_{t\geq\tau}\|\overline{\boldsymbol{\varphi}}(t)\|_{\mathcal{L}^2}^2,\notag
\end{align}
which yields \eqref{smoothing}. This completes the proof of Lemma \ref{L2-L infty}.
\end{proof}

\begin{remark}\rm
    It is worth mentioning that the $\mathcal{L}^2$--$\mathcal{L}^\infty$ smoothing property
    is established for the difference between the global weak solution $\boldsymbol{\varphi}$ and a stationary solution $\boldsymbol{\varphi}_\infty$.
    The proof essentially relies on the $\mathcal{L}^\infty$-norm of the gradient $(\nabla\varphi_\infty,\nabla_\Gamma\psi_\infty)$ (see \eqref{stationary-infinity}).
    It seems difficult to establish a similar result for the difference between $\boldsymbol{\varphi}_1$ and $\boldsymbol{\varphi}_2$,
    where $\boldsymbol{\varphi}_i$ are the global weak solution to problem \eqref{model} corresponding to the initial data $\boldsymbol{\varphi}_{0,i}$ $(i\in\{1,2\})$, respectively.
\end{remark}

\noindent\textbf{Proof of Theorem \ref{equilibrium}.}
We now have all the necessary ingredients for the proof:
\begin{itemize}
    \item [(1)] The characterization of $\omega(\boldsymbol{\varphi}_0)$.

    \item [(2)] The energy identity \eqref{energyeq}.

    \item [(3)] The \L ojasiewicz--Simon inequality \eqref{L-S}.

    \item [(4)] The $\mathcal{L}^2$--$\mathcal{L}^\infty$ smoothing property \eqref{smoothing}.
\end{itemize}
Based on the four ingredients above, the proof of Theorem \ref{equilibrium} can be carried out in the same way as that for \cite[Theorem 2.21]{GG}. Hence, we omit the details here.
\hfill $\square$


\appendix

\section{Useful tools}
\setcounter{equation}{0}
We report for the reader's convenience the following abstract result on the existence of exponential attractors (see \cite[Proposition 4.1]{EZ04}).
\begin{lemma}
	\label{abstract}
	Let $\mathbb{H}$, $\mathbb{V}$, $\mathbb{V}_1$ be Banach spaces such that the embedding $\mathbb{V}_1\hookrightarrow\mathbb{V}$ is compact.
    Let $\mathbb{B}$ be a closed bounded subset of  $\,\mathbb{H}$, and let $\mathbb{S}:\mathbb{B}\to \mathbb{B}$ be a map.
    Assume also that there exists a uniformly Lipschitz continuous map $\mathbb{T}: \mathbb{B}\to\mathbb{V}_1$, i.e.,
	\begin{align}
		\|\mathbb{T}b_1-\mathbb{T}b_2\|_{\mathbb{V}_1}\leq K_1\|b_1-b_2\|_{\mathbb{H}},\quad\forall\,b_1,b_2\in \mathbb{B},\label{T-Lip}
	\end{align}
	for some $K_1\geq0$, such that
	\begin{align}
		\|\mathbb{S}b_1-\mathbb{S}b_2\|_{\mathbb{H}}\leq\epsilon\|b_1-b_2\|_{\mathbb{H}}+K_2\|\mathbb{T}b_1-\mathbb{T}b_2\|_{\mathbb{V}},\quad\forall\,b_1,b_2\in \mathbb{B},\label
		{A2}
	\end{align}
	for some $\epsilon<1/2$ and $K_2\geq0$.
    Then, there exists a (discrete) exponential attractor $\mathcal{E}_d\subset \mathbb{B}$ for the semigroup $\big\{\mathbb{S}(n):=\mathbb{S}^n,\,n\in\mathbb{Z}^+\big\}$
    with discrete time in the phase space $\mathbb{H}$.
\end{lemma}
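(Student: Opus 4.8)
The plan is to carry out the classical ``squeezing plus compactness'' construction of exponential attractors, reducing the entire statement to one \emph{uniform covering estimate} for the map $\mathbb{S}$ and then iterating it along the decreasing family $\{\mathbb{S}^n\mathbb{B}\}_{n\ge0}$. As a preliminary I would note that the standing hypotheses make $\mathbb{S}$ Lipschitz on $(\mathbb{B},\|\cdot\|_{\mathbb{H}})$: chaining \eqref{A2}, \eqref{T-Lip} and the continuity of $\mathbb{V}_1\hookrightarrow\mathbb{V}$ gives $\|\mathbb{S}b_1-\mathbb{S}b_2\|_{\mathbb{H}}\le(\epsilon+C_0K_1K_2)\|b_1-b_2\|_{\mathbb{H}}$, where $C_0$ is the embedding constant; this continuity will be used for semi-invariance. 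Since $\mathbb{S}(\mathbb{B})\subset\mathbb{B}$, the iterates form a decreasing family $\mathbb{S}^{n+1}\mathbb{B}\subset\mathbb{S}^n\mathbb{B}\subset\mathbb{B}$, a fact I will exploit for compactness.

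The heart of the proof is the following covering lemma: there exist an integer $N\ge1$ and a factor $\lambda\in(0,1)$, both independent of $r>0$ and of $c\in\mathbb{B}$, such that $\mathbb{S}\big(B_{\mathbb{H}}(c,r)\cap\mathbb{B}\big)$ is covered by $N$ balls of radius $\lambda r$ with centers in $\mathbb{S}\mathbb{B}$. To prove it, I would first use \eqref{T-Lip} to place $\mathbb{T}\big(B_{\mathbb{H}}(c,r)\cap\mathbb{B}\big)$ inside a $\mathbb{V}_1$-ball of radius $2K_1r$. Because $\mathbb{V}_1\hookrightarrow\hookrightarrow\mathbb{V}$ is compact, the unit $\mathbb{V}_1$-ball is precompact in $\mathbb{V}$, so for each tolerance $\eta>0$ it is covered by a finite number $\mathcal{N}(\eta)$ of $\mathbb{V}$-balls of radius $\eta$; by scaling, the dilated $\mathbb{V}_1$-ball of radius $2K_1r$ is covered by $\mathcal{N}(\rho/(2K_1r))$ $\mathbb{V}$-balls of radius $\rho$. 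Choosing $\rho=\kappa r/(2K_2)$ makes the ratio $\rho/(2K_1r)=\kappa/(4K_1K_2)$ independent of $r$, so $N:=\mathcal{N}(\kappa/(4K_1K_2))$ is uniform in $r$ and $c$ --- obtaining this $r$-independent $N$ is exactly where the compact embedding enters and is the main obstacle of the argument. Pulling the $\mathbb{V}$-balls back through $\mathbb{T}$ partitions $B_{\mathbb{H}}(c,r)\cap\mathbb{B}$ into at most $N$ pieces $U_1,\dots,U_N$; on each piece $\|\mathbb{T}b_1-\mathbb{T}b_2\|_{\mathbb{V}}\le 2\rho$, whence \eqref{A2} gives $\|\mathbb{S}b_1-\mathbb{S}b_2\|_{\mathbb{H}}\le(2\epsilon+\kappa)r$. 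Taking $\kappa=(1-2\epsilon)/2$ yields $\lambda:=(1+2\epsilon)/2\in(0,1)$ since $\epsilon<1/2$, and each $\mathbb{S}(U_j)$ then lies in a ball of radius $\lambda r$ centered at $\mathbb{S}b_j^\ast$ for any chosen $b_j^\ast\in U_j$.

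Next I would iterate. As $\mathbb{B}$ is bounded, fix $b_\ast\in\mathbb{B}$ and $R>0$ with $\mathbb{B}\subset B_{\mathbb{H}}(b_\ast,R)$, and set $V_0=\{b_\ast\}$. Given a finite set $V_n\subset\mathbb{S}^n\mathbb{B}$ with $\mathbb{S}^n\mathbb{B}\subset\bigcup_{c\in V_n}B_{\mathbb{H}}(c,\lambda^nR)$, I apply the covering lemma with $r=\lambda^nR$ to each piece $\mathbb{S}^n\mathbb{B}\cap B_{\mathbb{H}}(c,\lambda^nR)$, choosing the representatives $b_j^\ast$ inside $\mathbb{S}^n\mathbb{B}$ so that the new centers $\mathbb{S}b_j^\ast$ lie in $\mathbb{S}^{n+1}\mathbb{B}$; adjoining also $\mathbb{S}V_n$ defines $V_{n+1}$. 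This produces finite sets with $\mathbb{S}^{n+1}\mathbb{B}\subset\bigcup_{c\in V_{n+1}}B_{\mathbb{H}}(c,\lambda^{n+1}R)$, the cardinality bound $|V_n|\le(2N)^n$, the nesting $\mathbb{S}V_n\subset V_{n+1}$, and $V_n\subset\mathbb{S}^n\mathbb{B}$. I then define $\mathcal{E}_d:=\overline{\bigcup_{n\ge0}V_n}^{\,\mathbb{H}}$.

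Finally I would verify the defining properties. Semi-invariance follows from $\mathbb{S}V_n\subset V_{n+1}$ and the Lipschitz continuity of $\mathbb{S}$, since $\mathbb{S}(\mathcal{E}_d)\subset\overline{\mathbb{S}(\bigcup_nV_n)}=\overline{\bigcup_n\mathbb{S}V_n}\subset\mathcal{E}_d$. Exponential attraction is immediate from the covering, as
\[\mathrm{dist}_{\mathbb{H}}(\mathbb{S}^n\mathbb{B},\mathcal{E}_d)\le\lambda^nR=R\,e^{-n\ln(1/\lambda)}.\]
For compactness I would use that $\{\mathbb{S}^n\mathbb{B}\}$ is decreasing, so $\bigcup_{k\ge n}V_k\subset\bigcup_{k\ge n}\mathbb{S}^k\mathbb{B}=\mathbb{S}^n\mathbb{B}\subset\bigcup_{c\in V_n}B_{\mathbb{H}}(c,\lambda^nR)$; hence for every $\delta>0$, choosing $n$ with $\lambda^nR<\delta$ covers the tail by the finite set $V_n$, while the head $V_0\cup\cdots\cup V_{n-1}$ is finite, so $\bigcup_nV_n$ is totally bounded and its closure $\mathcal{E}_d$ is compact in the Banach space $\mathbb{H}$. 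The same counting bounds the covering number of $\mathcal{E}_d$ by $\lesssim\sum_{k\le n}(2N)^k\lesssim(2N)^n$ at scale $\lambda^nR$, which gives $\mathrm{dim}_{\mathrm{F}}(\mathcal{E}_d)\le\ln(2N)/\ln(1/\lambda)<+\infty$. If $K_1=0$ or $K_2=0$ the map $\mathbb{S}$ is a genuine contraction and the statement is trivial, so one may assume $K_1,K_2>0$.
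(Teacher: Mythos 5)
Your proof is correct; note that the paper does not prove this lemma at all but quotes it as an abstract tool from \cite[Proposition 4.1]{EZ04}, and your squeezing-plus-covering construction is exactly the standard argument behind that cited result: the uniform covering step obtained from the compact embedding $\mathbb{V}_1\hookrightarrow\hookrightarrow\mathbb{V}$ together with \eqref{T-Lip} and \eqref{A2}, the contraction factor $\lambda=(1+2\epsilon)/2<1$ from $\epsilon<1/2$, the iteration of center sets $V_n$ along the nested family $\mathbb{S}^n\mathbb{B}$ with $\mathbb{S}V_n\subset V_{n+1}$ for semi-invariance, and the resulting total boundedness, exponential attraction rate $\lambda^nR$, and dimension bound $\ln(2N)/\ln(1/\lambda)$. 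The only step worth stating explicitly is that $\mathcal{E}_d\subset\mathbb{B}$, which follows since all centers lie in $\mathbb{S}^n\mathbb{B}\subset\mathbb{B}$ and $\mathbb{B}$ is closed in $\mathbb{H}$.
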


The following Young-type inequality is useful in the proof of Lemma \ref{L2-L infty}.
\begin{lemma}
\label{Mink}
Let $J\in W^{1,1}(\mathbb{R}^d)$ and $\phi\in H^1(\Omega)\cap L^\infty({\Omega})$. Then, there holds
\begin{align}
    \|J\ast\phi\|_{L^p(\Gamma)}\leq C\|J\|_{W^{1,1}(\mathbb{R}^d)}\|\phi\|_{L^p(\Omega)}\quad\forall\,1\leq p\leq +\infty,\notag
\end{align}
    where the constant $C>0$ depends only on $\Omega$, but is independent of $p$.
\end{lemma}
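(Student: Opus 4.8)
The plan is to avoid the standard trace inequality $\|u\|_{L^p(\Gamma)}\leq C_p\|u\|_{W^{1,p}(\Omega)}$ applied to $u=J\ast\phi$, since the trace constant $C_p$ generally degenerates as $p\to+\infty$, whereas here a constant \emph{independent of} $p$ is required. Instead I would argue pointwise on $\Gamma$ through a Hölder splitting of the kernel. For $1\leq p<+\infty$, with $p'=p/(p-1)$, I would estimate for each $x\in\Gamma$
\begin{align}
|(J\ast\phi)(x)| &= \Big|\int_\Omega |J(x-y)|^{1/p'}\,|J(x-y)|^{1/p}\phi(y)\,\mathrm{d}y\Big| \notag\\
&\leq \Big(\int_\Omega |J(x-y)|\,\mathrm{d}y\Big)^{1/p'}\Big(\int_\Omega |J(x-y)|\,|\phi(y)|^p\,\mathrm{d}y\Big)^{1/p}. \notag
\end{align}
Since $\int_\Omega|J(x-y)|\,\mathrm{d}y\leq \|J\|_{L^1(\mathbb{R}^d)}$, raising to the $p$-th power yields $|(J\ast\phi)(x)|^p \leq \|J\|_{L^1}^{p/p'}\int_\Omega|J(x-y)||\phi(y)|^p\,\mathrm{d}y$.

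Next I would integrate this bound over $\Gamma$ and apply Fubini's theorem to interchange the integrations over $\Gamma$ and $\Omega$, which reduces the whole estimate to controlling the inner surface integral $\int_\Gamma |J(x-y)|\,\mathrm{d}S_x$ uniformly in $y\in\Omega$. The key ingredient is the boundedness of the trace map $W^{1,1}(\mathbb{R}^d)\to L^1(\Gamma)$: since $\Gamma$ is smooth and compact, there is a constant $C_\Gamma>0$ depending only on $\Omega$ with $\|g|_\Gamma\|_{L^1(\Gamma)}\leq C_\Gamma\|g\|_{W^{1,1}(\mathbb{R}^d)}$. Applying this to the translate $g=J(\cdot-y)$ and invoking the translation invariance of the $W^{1,1}(\mathbb{R}^d)$-norm gives $\int_\Gamma|J(x-y)|\,\mathrm{d}S_x \leq C_\Gamma\|J\|_{W^{1,1}(\mathbb{R}^d)}$, uniformly in $y$. (If one prefers not to quote the trace theorem, the same bound follows by flattening $\Gamma$ in finitely many coordinate charts and using the one-dimensional estimate $\int_{\mathbb{R}^{d-1}}|J(x',c)|\,\mathrm{d}x'\leq \|\partial_d J\|_{L^1(\mathbb{R}^d)}$, valid at every level $c$.)

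Combining the two steps produces
\begin{align}
\int_\Gamma|(J\ast\phi)(x)|^p\,\mathrm{d}S_x \leq \|J\|_{L^1}^{p/p'}\,C_\Gamma\|J\|_{W^{1,1}}\int_\Omega|\phi(y)|^p\,\mathrm{d}y. \notag
\end{align}
Taking the $p$-th root, using $\|J\|_{L^1}\leq\|J\|_{W^{1,1}}$ together with $1/p'+1/p=1$, I would collect the powers of $\|J\|_{W^{1,1}}$ into a single factor, obtaining $\|J\ast\phi\|_{L^p(\Gamma)} \leq C_\Gamma^{1/p}\|J\|_{W^{1,1}}\|\phi\|_{L^p(\Omega)}$; since $C_\Gamma^{1/p}\leq \max\{1,C_\Gamma\}$ for all $p\geq1$, this is the asserted inequality with $C:=\max\{1,C_\Gamma\}$ depending only on $\Omega$. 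The case $p=+\infty$ is immediate from $|(J\ast\phi)(x)|\leq \|\phi\|_{L^\infty(\Omega)}\int_\Omega|J(x-y)|\,\mathrm{d}y\leq \|J\|_{W^{1,1}}\|\phi\|_{L^\infty(\Omega)}$ for every $x\in\Gamma$. The main obstacle is exactly the $p$-uniformity: both the Hölder splitting (which replaces the crude factor $\|J\|_{L^1}$ by the correct power) and the surface trace bound must be executed so that no $p$-dependent constant survives, and it is the exponent bookkeeping $1/p'+1/p=1$ that makes the final constant collapse to $\max\{1,C_\Gamma\}$.
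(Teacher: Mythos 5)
Your proposal is correct and follows essentially the same route as the paper's proof: the Hölder splitting $|J|=|J|^{1/p'}|J|^{1/p}$ to reduce to the $p=1$ case, then Fubini and the uniform bound $\int_\Gamma|J(x-y)|\,\mathrm{d}S_x\leq C\|J\|_{W^{1,1}(\mathbb{R}^d)}$. The only difference is that you spell out the justification of that surface-integral bound (the $W^{1,1}(\mathbb{R}^d)\to L^1(\Gamma)$ trace theorem applied to translates), which the paper uses without comment.
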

\begin{proof}
    The conclusion is obvious when $p=+\infty$. We first consider the case $p=1$. It is easy to see that
    \begin{align*}
         \|J\ast\phi\|_{L^1(\Gamma)}&\leq \int_\Gamma\int_\Omega |J(x-y)||\phi(y)|\,\mathrm{d}y\,\mathrm{d}S_x \\
         &=\int_\Omega\int_\Gamma |J(x-y)|\,\mathrm{d}S_x\,|\phi(y)|\,\mathrm{d}y \\
         &\leq C\|J\|_{W^{1,1}(\mathbb{R}^d)}\|\phi\|_{L^1(\Omega)}.
    \end{align*}
    When $p>1$, for almost everywhere $x\in\Gamma$, the function $y\mapsto|J(x-y)||\phi(y)|^p$ is integrable in $\Omega$, that is,
    \[|J(x-y)|^{\frac{1}{p}}|\phi(y)|\in L^p_y(\Omega).\]
    Since $|J(x-y)|^{\frac{1}{p'}}\in L^{p'}_y(\Omega)$, we deduce from H\"older's inequality that
    \[|J(x-y)||\phi(y)|=|J(x-y)|^{\frac{1}{p'}}|J(x-y)|^{\frac{1}{p}}|\phi(y)|\in L^1_y(\Omega)\]
    and
    \[\int_\Omega |J(x-y)||\phi(y)|\,\mathrm{d}y\leq C\|J\|_{W^{1,1}(\mathbb{R}^d)}^{\frac{1}{p'}}\Big(\int_\Omega |J(x-y)||\phi(y)|^p\,\mathrm{d}y\Big)^{\frac{1}{p}},\]
    that is,
    \[|J\ast\phi(x)|^p\leq C\|J\|_{W^{1,1}(\mathbb{R}^d)}^{\frac{p}{p'}}(|J|\ast|\phi|^p)(x). \]
    Then, we can conclude that
    \[\|J\ast\phi\|_{L^p(\Gamma)}^p\leq C\|J\|_{W^{1,1}(\mathbb{R}^d)}^{\frac{p}{p'}}\||J|\ast|\phi|^p\|_{L^1(\Gamma)}\leq C\|J\|_{W^{1,1}(\mathbb{R}^d)}^{\frac{p}{p'}+1}\||\phi|^{p}\|_{L^1(\Omega)},\]
    which gives
    \[\|J\ast\phi\|_{L^p(\Gamma)}\leq C\|J\|_{W^{1,1}(\mathbb{R}^d)}\|\phi\|_{L^p(\Omega)}.\]
   This completes the proof of Lemma \ref{Mink}.
\end{proof}
	
	\medskip
	
\noindent \textbf{Acknowledgments.}
The authors thank the anonymous referees for their careful reading of an initial version of this paper and for many helpful comments that allowed them to improve the presentation. H. Wu is a member of the Key Laboratory of Mathematics for Nonlinear Sciences (Fudan University), Ministry of Education of China. The research of H. Wu was partially supported by NNSFC Grant No. 12071084.
	\medskip
	

\end{document}